\documentclass[10pt,twoside]{amsart} 
\usepackage{amsmath, amsthm, amscd, amsfonts, amssymb, graphicx, color}
\usepackage[bookmarksnumbered, colorlinks, plainpages]{hyperref}

\newtheorem{theorem}{Theorem}[section]

\newtheorem{proposition}[theorem]{Proposition}
\newtheorem{corollary}[theorem]{Corollary}

\theoremstyle{definition}
\newtheorem{definition}[theorem]{Definition}

\newtheorem{remark}[theorem]{Remark}

\numberwithin{equation}{section}

\def\<{\langle}
\def\>{\rangle}

\long\def\alert#1{\smallskip{\hskip\parindent\vrule%
\vbox{\advance\hsize-2\parindent\hrule\smallskip\parindent.4\parindent%
\narrower\noindent#1\smallskip\hrule}\vrule\hfill}\smallskip}

\pagestyle{plain}

\begin{document}
\title[$\phi$-classical prime submodules]{$\phi$-classical prime submodules}
\author[Mostafanasab, Sevim, Babaei and Tekir]{Hojjat Mostafanasab$^*$, Esra Sengelen Sevim,\\ Sakineh Babaei and \"{U}nsal Tekir}

\thanks{$^*$Corresponding author}
\subjclass[2010]{Primary: 13A15; secondary: 13C99; 13F05}
\keywords{Classical prime submodule, Weakly classical prime submodule, $\phi$-Classical prime submodule.}

\begin{abstract}
In this paper, all rings are commutative with nonzero identity.
Let $M$ be an $R$-module. A proper submodule $N$ of $M$ 
is called a {\it classical prime submodule}, if for each  
$m\in M$ and elements $a,b\in R$, $abm\in N$ implies
that $am\in N$ or $bm\in N$. 
Let $\phi :S(M)\rightarrow S(M)\cup \{\emptyset \}$ be a function
where $S(M)$ is the set of all submodules of $M$.
We introduce the concept of ``$\phi$-classical prime submodules''. A proper submodule
$N$ of $M$ is a {\it $\phi$-classical prime submodule} if whenever
$a,b\in R$ and $m\in M$ with $abm\in N\backslash\phi(N)$,
then $am\in N$ or $bm\in N$.
\end{abstract}

\maketitle

\section{\protect\bigskip Introduction}

Throughout this paper all rings are commutative with nonzero identity and 
all modules are considered to be unitary. 
Anderson and Smith \cite{AS} said that a proper ideal $I$ of a ring $R$ is {\it weakly prime}
if whenever $a,b\in R$ with $0\neq ab\in I$, then $a\in I$ or $b\in I$.
In \cite{BS}, Bhatwadekar and Sharma defined a proper ideal $I$ of an integral domain $R$ to be {\it almost prime (resp. $n$-almost prime)} if for $a, b\in R$ with $ab\in I\backslash I^2$, (resp. $ab\in I\backslash I^n, n\geq 3$) either $a\in I$ or $b\in I$. This definition can obviously be made for any commutative ring $R$.  Later, Anderson and Batanieh \cite{AB} gave a generalization of prime ideals which covers all the above mentioned definitions. Let $\phi:\mathfrak{J}(R)\rightarrow \mathfrak{J}(R)\cup\{\emptyset\}$ be a function. A proper ideal $I$ of $R$ is said to be $\phi$-{\it prime} if for $a, b\in R$ with $ab\in I \backslash\phi(I)$, $a\in I$ or $b\in I$.
Several authors have extended the notion
of prime ideal to modules, see, for example \cite{D,L,MM}. Let $M$ be a module
over a commutative ring $R$. A proper submodule $N$ of $M$ is called \textit{prime} 
if for $a\in R$ and $m\in M$, $am\in N$ implies that $m\in N$ or $a\in (N:_{R}M)=\{r\in R\mid rM\subseteq N\}$. 
Weakly prime submodules were introduced by Ebrahimi Atani and Farzalipour in \cite{E1}. A proper
submodule $N$ of $M$ is called \textit{weakly prime} if for $a\in R$ and $m\in M$
with $0\neq am\in N$, either $m\in N$ or $a\in (N:_{R}M)$.
Zamani \cite{Z} introduced the concept of $\phi$-prime submodules. Let 
$\phi:S(M)\rightarrow S(M)\cup \{\emptyset \}$ be a function where $S(M)$ is the
set of all submodules of $M$. A proper submodule $N$ of an $R$-module $M$ is
called {\it $\phi$-prime} if $a\in R$ and $m\in M$ with 
$am\in N\backslash\phi (N)$, then $m\in N$ or $a\in (N:_{R}M)$.
He defined the map $\phi_{\alpha}:S(M)\rightarrow S(M)\cup\{\emptyset\}$ as follows:
\begin{itemize}
\item[(1)] $\phi_{\emptyset}$ : $\phi(N) = \emptyset$ defines prime submodules.
\item[(2)] $\phi_{0}$ : $\phi(N)=\{0\}$ defines weakly prime submodules.
\item[(3)] $\phi_{2}$ : $\phi(N)=(N:_RM)N$ defines almost prime submodules.
\item[(4)] $\phi_{n} (n\geq 2)$ : $\phi(I)=(N:_RM)^{n-1}N$ defines $n$-almost prime submodules.
\item[(5)] $\phi_{\omega}$ : $\phi(N) = \cap_{n=1}^{\infty} (N:_RM)^nN$ defines $\omega$-prime submodules.
\item[(6)] $\phi_{1}$ : $\phi(N) = N$ defines any submodules.
\end{itemize}
Also, Moradi and Azizi \cite{MA} investigated the notion of $n$-almost prime submodules.
A proper submodule $N$ of $M$ is called a \textit{classical prime submodule}, if for
each $m\in M$ and $a,b\in R$, $abm\in N$ implies that $am\in N$ or $%
bm\in N$. This notion of classical prime submodules has been extensively
studied by Behboodi in \cite{B1,B2} (see also, \cite{BK}, in which, the
notion of classical prime submodules is named \textquotedblleft weakly prime submodules\textquotedblright).
For more information on classical prime submodules, the reader is referred to \cite{A1,A,BSh}.
In \cite{MTO}, Mostafanasab et. al. said that a proper submodule $N$ of an $R$-module $M$ is called a
{\it weakly classical prime submodule} if whenever
$a,b\in R$ and $m\in M$ with $0\neq abm\in N$,
then $am\in N$ or $bm\in N$.

Let $\phi :S(M)\rightarrow S(M)\cup \{\emptyset \}$ be a function
where $S(M)$ is the set of all submodules of $M$. Let $N$ be a proper
submodule of $M$. Then we say that $N$ is a $\phi $-\textit{classical prime submodule} of $M$ if
whenever $a,b\in R$ and $m\in M$ with $abm\in N\backslash\phi(N)$, then 
$am\in N$ or $bm\in N$. Clearly, every classical prime submodule is a $\phi$-classical prime submodule.
We defined the map $\phi_{\alpha}:S(M)\rightarrow S(M)\cup\{\emptyset\}$ as follows:
\begin{itemize}
\item[(1)] $\phi_{\emptyset}$ : $\phi(N) = \emptyset$ defines classical prime submodules.
\item[(2)] $\phi_{0}$ : $\phi(N)=\{0\}$ defines weakly classical prime submodules.
\item[(3)] $\phi_{2}$ : $\phi(N)=(N:_RM)N$ defines almost classical prime submodules.
\item[(4)] $\phi_{n} (n\geq 2)$ : $\phi(I)=(N:_RM)^{n-1}N$ defines $n$-almost classical prime submodules.
\item[(5)] $\phi_{\omega}$ : $\phi(N) = \cap_{n=1}^{\infty} (N:_RM)^nN$ defines $\omega$-classical prime submodules.
\item[(6)] $\phi_{1}$ : $\phi(N) = N$ defines any submodules.
\end{itemize}
Throughout this paper $\phi :S(M)\rightarrow S(M)\cup\{\emptyset \}$ denotes a function. 
Since $N\backslash\phi (N)=N\backslash(N\cap \phi (N))$, for any
submodule $N$ of $M$, without loss of generality we may assume
that $\phi (N)\subseteq N$. For any two functions $\psi _{1},$ $\psi
_{2}:S(M)\rightarrow S(M)\cup \{\emptyset \}$, we say $\psi _{1}\leq \psi
_{2}$ if $\psi _{1}(N)\subseteq \psi _{2}(N)$ for each $N\in S(M)$. 
Thus clearly we have the following order: $\phi _{\emptyset }\leq \phi _{0}\leq
\phi _{\omega }\leq ...\leq \phi _{n+1}\leq \phi _{n}\leq ...\leq \phi
_{2}\leq \phi _{1}$. Whenever $\psi_1\leq \psi_2$, any $\psi_1$-classical prime submodule is $\psi_2$-classical prime. 

An $R$-module $M$ is called
a \textit{multiplication module} if every submodule $N$ of $M$ has the form $%
IM$ for some ideal $I$ of $R$, see \cite{ES}. Note that, since $I\subseteq (N:_{R}M)$ then $%
N=IM\subseteq (N:_{R}M)M\subseteq N$. So that $N=(N:_{R}M)M$.
Let $N$ and $K$ be submodules of a multiplication 
$R$-module $M$ with $N=I_{1}M$ and $K=I_{2}M$ for some ideals $I_{1}$ and $%
I_{2}$ of $R$. The product of $N$ and $K$ denoted by $NK$ is defined by $%
NK=I_{1}I_{2}M$. Then by \cite[Theorem 3.4]{Am}, the product of $N$ and $K$
is independent of presentations of $N$ and $K$. Moreover, for $m,m'\in M$, by 
$mm'$, we mean the product of $Rm$ and $Rm'$. Clearly, $NK$ is a submodule of $%
M$ and $NK\subseteq N\cap K$ (see \cite{Am}). Let $N$ be a proper submodule
of a nonzero $R$-module $M$. Then the $M$-radical of $N$, denoted by $M$-$%
\mathrm{rad}(N)$, is defined to be the intersection of all prime submodules
of $M$ containing $N$. If $M$ has no prime submodule containing $N$, then we
say $M$-$\mathrm{rad}(N)=M$. It is shown in \textrm{\cite[Theorem 2.12]{ES}}
that if $N$ is a proper submodule of a multiplication $R$-module $M$, then $%
M $-$\mathrm{rad}(N)=\sqrt{(N:_{R}M)}M$. 

In \cite{Q}, Quartararo et. al. said that a commutative ring $R$ is a $u$%
-ring provided $R$ has the property that an ideal contained in a finite
union of ideals must be contained in one of those ideals; and a $um$-ring is
a ring $R$ with the property that an $R$-module which is equal to a finite
union of submodules must be equal to one of them. 
They show that every B$\acute{\rm e}$zout ring is a $u$-ring. Moreover, they proved that 
every Pr\"{u}fer domain is a $u$-domain. Also, any ring which contains an infinite field
as a subring is a $u$-ring, \cite[Exercise 3.63]{Sh}.

Let $M$ be an $R$-module and $\phi :S(M)\rightarrow S(M)\cup\{\emptyset \}$ be a function.
It is shown (Theorem \ref{main}) that $N$ is a $\phi$-classical prime submodule of
$M$ if and only if for every ideals $I,~J$ of $R$ and $m\in M$ with $IJm\subseteq N$ and $IJm\nsubseteq\phi(N)$, either
$Im\subseteq N$ or $Jm\subseteq N$. 
It is shown (Theorem \ref{main2}) that  over a $um$-ring $R$, 
$N$ is a $\phi$-classical prime submodule of $M$ if and only if for every ideals $I,~J$ of $R$ and submodule
$L$ of $M$ with $IJL\subseteq N$ and $IJL\nsubseteq\phi(N)$, either $IL\subseteq N$ or $JL\subseteq N$.
It is proved  (Theorem \ref{T2})  that if $N$ is a $\phi$-classical prime submodule of $M$ that is not classical prime, then $(N:_{R}M)^{2}N\subseteq\phi(N)$. Let $M_{1}, M_{2}$ be $R$-modules and $N_{1}$ be a proper submodule of $M_1$. 
Suppose that $\psi _{i}:S(M_{i})\rightarrow S(M_{i})\cup\{\emptyset \}$ be functions (for $i=1,2$) and let $\phi =\psi _{1}\times\psi _{2}$. In Theorem \ref{prod1} we prove that the following conditions are equivalent:

(1) $N_{1}\times M_{2}$ is a $\phi$-classical prime submodule of $M_{1}\times M_{2}$;

(2) $N_1$ is a $\phi$-classical prime submodule of $M_1$ and for each $r,s\in R$ and $m_1\in M_1$

\hspace{0.4cm} we have $rsm_1\in\psi_1(N_1),\ rm_1\notin N_1, \ sm_1\notin N_1\Rightarrow rs\in (\psi_2(M_2):_RM_2).$\\
Let $R=R_{1}\times R_{2}\times R_3$ be a decomposable ring and $M=M_{1}\times M_{2}\times M_3$ be an $R$-module where
$M_{i}$ is an $R_{i}$-module, for $i=1,2,3$. Suppose that $\psi _{i}:S(M_{i})\rightarrow S(M_{i})\cup\{\emptyset \}$ be functions such that $\psi(M_i)\neq M_i$ for $i=1,2,3,$ and let $\phi =\psi _{1}\times\psi _{2}\times\psi_3.$  
In Theorem \ref{product3} it is proved that if $N$ is a $\phi$-classical prime submodule of $M$, then either $N=\phi(N)$ or $N$ is a classical prime submodule of $M$.

\bigskip

\section{Properties of $\phi$-classical prime submodules}

Let $M$ be an $R$-module, $K$ be a submodule of $M$ and $\phi:S(M)\to S(M)\cup\{\emptyset\}$ 
be a function. Define $\phi_K:S(M/K)\to S(M/K)\cup\{\emptyset\}$ by $\phi_K(N/K)=(\phi(N)+K)/K$
for every $N\in S(M)$ with $N\supseteq K$ (and $\phi_K(N/K)=\emptyset$ if $\phi(N)=\emptyset$).
\begin{theorem}\label{frac}
Let $M$ be an $R$-module and $K\subseteq N$ be proper submodules of $M$. Suppose that $\phi :S(M)\rightarrow S(M)\cup
\{\emptyset \}$ be a function.
\begin{enumerate}
\item If $N$ is a $\phi$-classical prime submodule of $M$, then $N/K$ is a 
$\phi_K$-classical prime submodule of $M/K$. 

\item If $K\subseteq\phi(N)$ and $N/K$ is a  $\phi_K$-classical prime submodule of $M/K$, then $N$ is a $\phi$-classical prime submodule of $M$.

\item If $\phi(N)\subseteq K$ and $N$ is a $\phi$-classical prime submodule of $M$, then $N/K$ is a 
weakly classical prime submodule of $M/K$. 

\item If $\phi(K)\subseteq\phi(N)$, $K$ is a $\phi$-classical prime submodule of $M$ and $N/K$ is a weakly classical prime submodule of $M/K$, then $N$ is a $\phi$-classical prime submodule of $M$.
\end{enumerate}
\end{theorem}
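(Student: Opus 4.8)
The plan is to move elements between $M$ and $M/K$ via the correspondence $m\mapsto \bar m:=m+K$, translating the defining condition $abm\in N\setminus\phi(N)$ into its quotient counterpart; the three distinct hypotheses relating $\phi(N)$, $\phi(K)$ and $K$ are each tailored to make one direction of the translation work. In every part I would fix $a,b\in R$ and an element (upstairs or downstairs), verify that the relevant ``removed'' set is avoided, apply the primeness hypothesis available at that level, and then lift or project the resulting conclusion.

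For (1) I would start with $ab\bar m\in(N/K)\setminus\phi_K(N/K)$. Since $K\subseteq N$, membership $ab\bar m\in N/K$ lifts to $abm\in N$, while $ab\bar m\notin(\phi(N)+K)/K$ forces $abm\notin\phi(N)+K\supseteq\phi(N)$, so $abm\in N\setminus\phi(N)$; the $\phi$-classical primeness of $N$ yields $am\in N$ or $bm\in N$, i.e. $a\bar m\in N/K$ or $b\bar m\in N/K$. For (2) the hypothesis $K\subseteq\phi(N)$ collapses $\phi_K(N/K)=(\phi(N)+K)/K$ to $\phi(N)/K$, and then $abm\notin\phi(N)$ is \emph{equivalent} to $ab\bar m\notin\phi(N)/K$; passing $abm\in N\setminus\phi(N)$ to the quotient, applying $\phi_K$-classical primeness, and lifting the conclusion finishes it.

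Part (3) is the symmetric counterpart: now $\phi(N)\subseteq K$, so any $\bar 0\neq ab\bar m\in N/K$ means $abm\in N$ with $abm\notin K\supseteq\phi(N)$, hence $abm\in N\setminus\phi(N)$, and $\phi$-classical primeness of $N$ transfers directly to the weakly classical prime conclusion for $N/K$. In each of (1)--(3) the only point requiring care is the precise set-theoretic inclusion that governs which of ``$\notin\phi_K$'' or ``$\neq\bar 0$'' is strong enough to guarantee ``$\notin\phi(N)$'' upstairs.

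The crux is (4), where I must reconstruct $\phi$-classical primeness of $N$ from two inputs living at different levels. Given $abm\in N\setminus\phi(N)$, I would split on whether $abm\in K$. If $abm\notin K$, then $\bar 0\neq ab\bar m\in N/K$, so the weakly classical prime hypothesis on $N/K$ gives $am\in N$ or $bm\in N$. If instead $abm\in K$, the nesting $\phi(K)\subseteq\phi(N)$ together with $abm\notin\phi(N)$ forces $abm\notin\phi(K)$, so $abm\in K\setminus\phi(K)$, and $\phi$-classical primeness of $K$ gives $am\in K\subseteq N$ or $bm\in K\subseteq N$. The main obstacle is recognizing that the dichotomy $abm\in K$ versus $abm\notin K$ is exactly matched to the two hypotheses, and that $\phi(K)\subseteq\phi(N)$ is precisely what allows the second case to invoke primeness of $K$ rather than of $N$.
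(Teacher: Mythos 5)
Your proposal is correct and follows essentially the same route as the paper's own proof: each part translates the condition $abm\in N\setminus\phi(N)$ across the quotient map using exactly the stated inclusion, and part (4) uses the same dichotomy on whether $abm\in K$, invoking $\phi$-classical primeness of $K$ in one case and weak classical primeness of $N/K$ in the other. No gaps.
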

\begin{proof}
(1) Let $a,b\in R$ and $m\in M$ be such that $ab(m+K)\in(N/K)\backslash\phi_K(N/K)$. It follows that 
$abm\in N\backslash\phi(N)$, that gives $am\in N$ or $bm\in N$. Therefore $a(m+K)\in N/K$ or $b(m+K)\in N/K$.

(2) Let $a,b\in R$ and $m\in M$ be such that $abm\in N\backslash\phi(N)$. 
Then $ab(m+K)\in(N/K)\backslash\phi_K(N/K)=(N/K)\backslash(\phi(N)/K)$. Hence 
$a(m+K)\in N/K$ or $b(m+K)\in N/K$, and so $am\in N$ or $bm \in N$. 

(3) Let $a,b\in R$ and $m\in M$ be such that $0\neq ab(m+K)\in(N/K)$. Hence 
$abm\in N\backslash\phi(N)$, because  $\phi(N)\subseteq K$. Thus $am\in N$ or $bm\in N$. Therefore $a(m+K)\in N/K$ or $b(m+K)\in N/K$.

(4) Let $a,b\in R$ and $m\in M$ be such that $abm\in N\backslash\phi(N)$. 
Note that $\phi(K)\subseteq\phi(N)$ implies that $abm\notin\phi(K)$.
If $abm\in K$, then $am\in K\subseteq N$ or $bm\in K\subseteq N$, since $K$ is $\phi$-classical prime. 
Now, assume that $abm\notin K$. So $0\neq ab(m+K)\in N/K$. Therefore, since
$N/K$ is a weakly classical prime submodule of $M/K$, either
$a(m+K)\in N/K$ or $b(m+K)\in N/K$. Thus $am\in N$ or $bm \in N$. 
\end{proof}

\begin{corollary}
Let $N$ be a proper submodule of $M$ and $\phi :S(M)\rightarrow S(M)\cup\{\emptyset \}$ be a function.
Then $N$ is a $\phi$-classical prime submodule of $M$ if and only if $N / \phi(N)$ is a weakly classical prime submodule of $M/\phi(N)$.
\end{corollary}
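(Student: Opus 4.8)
The plan is to obtain both implications by specializing the preceding Theorem \ref{frac} to the choice $K=\phi(N)$. First I would record the bookkeeping that legitimizes this substitution: by the blanket convention $\phi(N)\subseteq N$, and since $N$ is proper, we have $\phi(N)\subseteq N\subsetneq M$, so $\phi(N)$ is a proper submodule contained in $N$. Hence $K=\phi(N)$ is an admissible choice in Theorem \ref{frac} (where $K\subseteq N$ are required to be proper), and the quotient $M/\phi(N)$ together with its submodule $N/\phi(N)$ is well defined.

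For the forward implication I would invoke Theorem \ref{frac}(3) with $K=\phi(N)$. Its hypothesis $\phi(N)\subseteq K$ becomes $\phi(N)\subseteq\phi(N)$, which is trivially true, so the conclusion is \emph{verbatim} the statement that $N/\phi(N)$ is a weakly classical prime submodule of $M/\phi(N)$. No further computation is needed in this direction.

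For the reverse implication I would argue directly rather than appeal to Theorem \ref{frac}(4), since the latter carries extra hypotheses ($K$ being $\phi$-classical prime and $\phi(K)\subseteq\phi(N)$) that need not hold here. Assuming $N/\phi(N)$ is weakly classical prime, take $a,b\in R$ and $m\in M$ with $abm\in N\backslash\phi(N)$. Passing to $M/\phi(N)$, the coset $abm+\phi(N)$ is nonzero precisely because $abm\notin\phi(N)$, and it lies in $N/\phi(N)$ precisely because $abm\in N$; thus $0\neq ab(m+\phi(N))\in N/\phi(N)$. The weakly classical prime hypothesis then forces $a(m+\phi(N))\in N/\phi(N)$ or $b(m+\phi(N))\in N/\phi(N)$, that is, $am\in N$ or $bm\in N$, which is exactly the $\phi$-classical prime condition for $N$.

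The only real obstacle is at the boundary of the setup rather than in the substance of the argument. One must confirm that $K=\phi(N)$ is permissible (properness and containment, handled above). If one instead prefers to deduce the reverse direction from Theorem \ref{frac}(2), the point to verify is the identity $\phi_{\phi(N)}(N/\phi(N))=(\phi(N)+\phi(N))/\phi(N)=\{\bar 0\}$, so that the $\phi_{\phi(N)}$-classical prime condition collapses to the weakly classical prime condition (the case $\phi_0$); this makes Theorem \ref{frac}(2) applicable verbatim. Finally, the genuinely degenerate value $\phi(N)=\emptyset$ (the empty set, as opposed to the zero submodule), for which the quotient $M/\phi(N)$ is not defined, is implicitly excluded from the statement.
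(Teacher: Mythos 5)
Your proof is correct and follows the paper's intended route: the corollary is stated there without proof as an immediate consequence of Theorem \ref{frac} with $K=\phi(N)$, using part (3) for the forward direction and part (2) for the converse after observing that $\phi_{\phi(N)}(N/\phi(N))=(\phi(N)+\phi(N))/\phi(N)=\{\bar 0\}$, so that the $\phi_K$-classical prime condition collapses to weakly classical prime. Your direct verification of the reverse implication is just the unwound form of part (2), and your remarks on the admissibility of $K=\phi(N)$ and the degenerate case $\phi(N)=\emptyset$ are accurate.
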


\begin{theorem}\label{colon}
Let $M$ be an $R$-module and $N$ be a proper submodule of $M$. Suppose that $\phi :S(M)\rightarrow S(M)\cup
\{\emptyset \}$ and $\psi :\mathfrak{J}(R)\rightarrow\mathfrak{J}(R)\cup\{\emptyset \}$ be two functions.
\begin{enumerate}
\item If $N$ is a $\phi$-classical prime submodule of $M$, then $\left( N:_Rm\right) $ is a 
$\psi$-prime ideal of $R$ for every $m\in M\backslash N$ with $(\phi(N):_Rm)\subseteq\psi((N:_Rm))$. 

\item If $\left( N:_Rm\right) $ is a $\psi$-prime ideal of $R$ for every $m\in M\backslash N$
with $\psi((N:_Rm))\subseteq(\phi(N):_Rm)$, then $N$ 
is a $\phi$-classical prime submodule of $M$.
\end{enumerate}
\end{theorem}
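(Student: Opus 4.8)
The plan is to translate both assertions into statements about the colon ideal $(N:_{R}m)$, using the elementary dictionaries $abm\in N\Leftrightarrow ab\in(N:_{R}m)$, $abm\in\phi(N)\Leftrightarrow ab\in(\phi(N):_{R}m)$, and $am\in N\Leftrightarrow a\in(N:_{R}m)$ (likewise for $b$). Under these correspondences the $\phi$-classical prime condition, tested at a fixed $m$, is exactly a primeness-type condition on the ideal $(N:_{R}m)$, so the entire theorem reduces to comparing the two ``exceptional sets'' $(\phi(N):_{R}m)$ and $\psi((N:_{R}m))$ and reading off which inclusion between them is needed.

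For part (1), I would first record that $(N:_{R}m)$ is a proper ideal whenever $m\in M\backslash N$, since $m\notin N$ forces $1\notin(N:_{R}m)$; this is needed even to speak of a $\psi$-prime ideal. Then, given $a,b\in R$ with $ab\in(N:_{R}m)\backslash\psi((N:_{R}m))$, the hypothesis $(\phi(N):_{R}m)\subseteq\psi((N:_{R}m))$ yields $ab\notin(\phi(N):_{R}m)$, i.e. $abm\notin\phi(N)$, while $ab\in(N:_{R}m)$ gives $abm\in N$. Hence $abm\in N\backslash\phi(N)$, and the $\phi$-classical primeness of $N$ produces $am\in N$ or $bm\in N$, that is $a\in(N:_{R}m)$ or $b\in(N:_{R}m)$. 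This is precisely $\psi$-primeness of $(N:_{R}m)$.

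For part (2), I would start from $a,b\in R$ and $m\in M$ with $abm\in N\backslash\phi(N)$ and dispose of the trivial case $m\in N$ first (there $am,bm\in Rm\subseteq N$). For $m\in M\backslash N$, the membership $abm\in N$ reads $ab\in(N:_{R}m)$ and $abm\notin\phi(N)$ reads $ab\notin(\phi(N):_{R}m)$. Here the containment runs the opposite way, $\psi((N:_{R}m))\subseteq(\phi(N):_{R}m)$, and this single inclusion does double duty: it activates the hypothesis, so $(N:_{R}m)$ is $\psi$-prime, and it forces $ab\notin\psi((N:_{R}m))$ out of $ab\notin(\phi(N):_{R}m)$. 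Thus $ab\in(N:_{R}m)\backslash\psi((N:_{R}m))$, and $\psi$-primeness delivers $a\in(N:_{R}m)$ or $b\in(N:_{R}m)$, i.e. $am\in N$ or $bm\in N$.

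The delicate point, which I would flag as the crux, is the direction of the two inclusions relating $(\phi(N):_{R}m)$ and $\psi((N:_{R}m))$. In (1) the argument needs $ab\notin(\phi(N):_{R}m)$ to be deducible from $ab\notin\psi((N:_{R}m))$, whereas in (2) it needs $ab\notin\psi((N:_{R}m))$ to follow from $ab\notin(\phi(N):_{R}m)$; these are exactly the two opposite containments imposed in the respective hypotheses. Each half therefore uses its inclusion precisely once, in the decisive step that keeps the product $ab$ outside the exceptional set, and the content of (2) for a given triple rests on the tested $m$ satisfying its containment condition. Getting either inclusion the wrong way round severs the link between the module-side exception $\phi(N)$ and the ideal-side exception $\psi((N:_{R}m))$, and the argument collapses.
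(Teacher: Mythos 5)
Your proof is correct and takes essentially the same route as the paper's: both parts are handled by translating membership statements through the colon ideal $(N:_{R}m)$ and using the stated inclusion between $(\phi(N):_{R}m)$ and $\psi((N:_{R}m))$ exactly once, in the direction you identify. The only difference is presentational --- you spell out the dictionary and the role of each inclusion (including the implicit assumption in part (2) that the tested $m$ satisfies its containment condition) more explicitly than the paper does.
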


\begin{proof}
(1) Suppose that $N$ is a $\phi$-classical prime submodule. Let $m\in M\backslash N$ with $(\phi(N):_Rm)\subseteq\psi((N:_Rm))$, 
and $ab\in \left( N:_Rm\right)\backslash\psi(( N:_Rm))$ for some $a,b\in R$. Then $abm\in N\backslash\phi(N)$. So $am\in N$ or $bm\in N$.  Hence $a\in \left( N:_Rm\right)$ or $b\in \left( N:_Rm\right) $. Consequently $\left( N:_Rm\right) $ is a 
$\psi$-prime ideal of $R$.

(2) Assume that $\left( N:_Rm\right) $ is a $\psi$-prime ideal of $R$ for every $m\in M\backslash N$
with $\psi((N:_Rm))\subseteq(\phi(N):_Rm)$. Let $abm\in N\backslash\phi(N)$ 
for some $m\in M$ and $a,b\in R$. If $m\in N$, then we are done. So
we assume that $m\notin N$. Hence $ab\in(N:_Rm)\backslash\psi((N:_Rm))$ implies that either $a\in (N:_Rm)$
or $b\in (N:_Rm)$. Therefore either $am\in N$ or $bm\in N$, and so $N$ is $\phi$-classical prime.
\end{proof}

\begin{corollary}
Let $R$ be a ring and $\psi :\mathfrak{J}(R)\rightarrow\mathfrak{J}(R)\cup\{\emptyset \}$ be a function. Then
$I$ is a $\psi$-prime ideal of $R$ if and only if
$_{R}I$ is a $\psi$-classical prime submodule of $_{R}R$. 
\end{corollary}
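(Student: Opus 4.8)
The plan is to specialize the preceding Theorem~\ref{colon} to the module $M={}_RR$, where the ring acts on itself, and then check that the module-theoretic notions collapse exactly onto the ideal-theoretic ones. The key observation is that for a ring $R$ viewed as a module over itself, the elements of $M$ are ring elements, and for any $m\in R$ we have the colon $(I:_R m)=\{r\in R\mid rm\in I\}$, which is the ideal quotient. In particular, taking $m=1$ gives $(I:_R 1)=I$. This means the single representative $m=1$ already encodes all the relevant data, which is what makes the equivalence clean.

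First I would set up the correspondence between the function $\phi$ on submodules and the function $\psi$ on ideals: since submodules of ${}_RR$ are precisely ideals of $R$, the two functions live on the same domain $\mathfrak{J}(R)=S({}_RR)$, so I would simply take $\phi=\psi$. For the forward direction, suppose $I$ is $\psi$-prime. I want to apply part~(2) of Theorem~\ref{colon}, which requires that $(I:_R m)$ be $\psi$-prime for every $m\in R\setminus I$ satisfying $\psi((I:_R m))\subseteq(\phi(I):_R m)$. The main work is to verify that $(I:_R m)$ inherits the $\psi$-prime property from $I$; here one uses that if $ab\in (I:_R m)\setminus\psi((I:_R m))$ then $abm\in I$, and one needs to convert the $\psi$-prime condition on $I$ into one on the quotient ideal $(I:_R m)$. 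For the reverse direction, suppose ${}_RI$ is $\phi$-classical prime; I would apply part~(1) of Theorem~\ref{colon} with $m=1$, noting that $(I:_R 1)=I$ and $(\phi(I):_R 1)=\phi(I)$, so the hypothesis $(\phi(I):_R 1)\subseteq\psi((I:_R 1))$ reduces to $\phi(I)\subseteq\psi(I)$, which holds with equality under the identification $\phi=\psi$. This immediately yields that $I=(I:_R 1)$ is $\psi$-prime.

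The step I expect to be the genuine obstacle is checking that the colon hypotheses in Theorem~\ref{colon}, namely the containments $(\phi(I):_R m)\subseteq\psi((I:_R m))$ and $\psi((I:_R m))\subseteq(\phi(I):_R m)$, are actually met for the relevant $m$, rather than just being verbose. In the reverse direction this is painless because $m=1$ forces equality. In the forward direction one must be slightly careful: the cleanest route is to observe that under $\phi=\psi$ the condition in part~(2) is automatically satisfied for $m=1$, and then argue that verifying the classical prime condition $abm\in I\setminus\phi(I)\Rightarrow am\in I$ or $bm\in I$ directly reduces, via the $\psi$-primeness of $I$ applied to $ab\cdot m$, to the ideal statement. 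In fact I would streamline the whole proof by bypassing the general colon-hypothesis bookkeeping: work directly from the definitions, noting that $abm\in I\setminus\phi(I)$ with $m\in R$ is the same relation as $(ab\cdot m)\in I\setminus\phi(I)$, and that $\psi$-primeness of $I$ applied to the product $(am)(b)$ or $(a)(bm)$ delivers the desired conclusion, while conversely the $\phi$-classical prime condition with $m=1$ is literally the $\psi$-prime condition. This direct approach sidesteps the subtlety and makes the ``if and only if'' transparent.
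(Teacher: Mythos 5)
Your final, streamlined proof is correct and in substance the same as the paper's: the reverse direction is exactly Theorem~\ref{colon}(1) applied at $m=1$ (where $(I:_R1)=I$ and $(\psi(I):_R1)=\psi(I)=\psi((I:_R1))$), and your direct forward argument --- factor $abm$ as $a\cdot(bm)$, apply $\psi$-primeness of $I$, and note that $a\in I$ forces $am\in I$ --- is precisely what the paper obtains by combining the identification of $\psi$-prime ideals with $\psi$-prime submodules of ${}_RR$ with Proposition~\ref{abs-class}(1). You were right to abandon your first idea of routing the forward direction through Theorem~\ref{colon}(2), since showing that each $(I:_Rm)$ is $\psi$-prime is not automatic from the $\psi$-primeness of $I$; the direct argument avoids that issue entirely.
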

\begin{proof}
First, note that $_{R}I$ is a 
$\psi$-prime submodule of $_{R}R$ if and only if $I$ is a $\psi$-prime ideal of $R$. Now, apply part (1) of Proposition \ref{abs-class}.
Conversely, let $_{R}I$ be a $\psi$-classical prime submodule of $_{R}R$. 
Notice that $(\psi(I):_R1)=\psi((I:_R1))=\psi(I)$.
Then by Theorem \ref{colon}(1),
$(I:_R1)=I$ is a $\psi$-prime ideal of $R$. 
\end{proof}

Darani and Soheilnia \cite{YS} generalized the concept of prime submodules of a module over a commutative ring as
follows: Let $N$ be a proper submodule of an $R$-module $M$. Then $N$ is said
to be a \textit{2-absorbing submodule of} $M$ if whenever $a,b\in R$ and $m\in M$ with $abm\in N$, 
then $am\in N$ or $bm\in N$ or $ab\in (N:_{R}M)$.
Let $N$ be a proper submodule of an $R$-module $M$. Then $N$ is said
to be a \textit{$\phi$-2-absorbing submodule of} $M$ if whenever $a,b\in R$ and $m\in M$ with $abm\in N\backslash\phi(N)$, 
then $am\in N$ or $bm\in N$ or $ab\in (N:_{R}M)$, see \cite{EN}.
\begin{proposition}\label{abs-class}
Let $N$ be a proper submodule of an $R$-module $M$. Suppose that $\phi :S(M)\rightarrow S(M)\cup
\{\emptyset \}$ and $\psi :\mathfrak{J}(R)\rightarrow\mathfrak{J}(R)\cup\{\emptyset \}$ be two functions.
\begin{enumerate}
\item If $N$ is a $\phi$-prime submodule of $M$, then $N$ is a $\phi$-classical prime submodule of $M$.

\item If $N$ is a $\phi$-classical prime submodule of $M$, then $N$ is a $\phi$-2-absorbing submodule of $M$. The 
converse holds if in addition $(N:_RM)$ is a $\psi$-prime ideal of $R$ and $\psi((N:_RM))\subseteq(\phi(N):_Rm)$.
\end{enumerate}
\end{proposition}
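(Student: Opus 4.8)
The plan is to prove each implication directly from the definitions, passing the conclusion ``$am\in N$ or $bm\in N$'' between the three notions. For part (1), I would take $a,b\in R$ and $m\in M$ with $abm\in N\backslash\phi(N)$, regroup as $a(bm)\in N\backslash\phi(N)$, and apply the $\phi$-prime hypothesis to the scalar $a$ and the element $bm$. This returns either $bm\in N$, in which case we are done, or $a\in(N:_RM)$; in the latter case $am\in aM\subseteq N$, again giving the desired classical-prime conclusion.

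For the forward direction of part (2) the implication is immediate and requires no real argument: the $\phi$-classical prime conclusion ``$am\in N$ or $bm\in N$'' is logically stronger than the $\phi$-2-absorbing conclusion ``$am\in N$ or $bm\in N$ or $ab\in(N:_RM)$'', so any $N$ satisfying the former satisfies the latter.

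The only substantive step is the converse in part (2). I would take $a,b\in R$ and $m\in M$ with $abm\in N\backslash\phi(N)$ and invoke the $\phi$-2-absorbing hypothesis, obtaining $am\in N$, $bm\in N$, or $ab\in(N:_RM)$. The first two cases are exactly what we want, so assume $ab\in(N:_RM)$. The key point is to upgrade this to $ab\in(N:_RM)\backslash\psi((N:_RM))$ so that the $\psi$-primeness of $(N:_RM)$ becomes available. Here the containment $\psi((N:_RM))\subseteq(\phi(N):_Rm)$ does the work: were $ab\in\psi((N:_RM))$, then $ab\in(\phi(N):_Rm)$, that is $abm\in\phi(N)$, contradicting $abm\notin\phi(N)$. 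Hence $ab\in(N:_RM)\backslash\psi((N:_RM))$, and $\psi$-primeness yields $a\in(N:_RM)$ or $b\in(N:_RM)$, whence $am\in aM\subseteq N$ or $bm\in bM\subseteq N$.

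I expect this last step to be the only genuine obstacle, and it is a mild one: the entire content lies in recognizing that the containment hypothesis $\psi((N:_RM))\subseteq(\phi(N):_Rm)$ is precisely what certifies $ab\notin\psi((N:_RM))$, after which the $\psi$-prime property of the colon ideal closes the argument through the standard manipulation ``$a\in(N:_RM)\Rightarrow am\in N$''. No finiteness, radical, or multiplication-module machinery is needed.
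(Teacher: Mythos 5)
Your proposal is correct and follows essentially the same route as the paper's proof: part (1) by applying $\phi$-primeness to the pair $(a,bm)$, the forward direction of (2) by weakening the disjunction, and the converse of (2) by using $\psi((N:_RM))\subseteq(\phi(N):_Rm)$ to certify $ab\notin\psi((N:_RM))$ and then invoking $\psi$-primeness of the colon ideal. The only cosmetic difference is that the paper phrases the converse of (2) as a proof by contradiction while you argue directly; the content is identical.
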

\begin{proof}
(1) Assume that $N$ is a $\phi$-prime submodule of $M$. Let $a,b\in R$ and $m\in M$
such that $abm\in N\backslash\phi(N)$. Therefore either $bm\in N$ or $a\in(N:_RM)$.
The first case leads us to the claim. In the second case we have that $am\in N$.
Consequently $N$ is a $\phi$-classical prime submodule.

(2) It is evident that if $N$ is $\phi$-classical prime, then it is $\phi$-2-absorbing. 
Assume that $N$ is a $\phi$-2-absorbing submodule of $M$ and $(N:_RM)$ is a $\psi$-prime ideal of $R$. 
Let $abm\in N\backslash\phi(N)$ for some $a,b\in R$ and $m\in M$ such that neither $am\in N$ nor $bm\in N$. 
Then $ab\in(N:_RM)$. Since $abm\notin\phi(N)$, then 
$\psi((N:_RM))\subseteq(\phi(N):_Rm)$ implies that $ab\notin\psi((N:_RM))$.
Therefore, either $a\in(N:_RM)$ or $b\in(N:_RM)$. This contradiction 
shows that $N$ is $\phi$-classical prime.
\end{proof}

\begin{definition}
Let $N$ be a proper submodule of a multiplication $R$-module $M$ and $n\geq 2$. Then
$N$ is said to be $n$-{\it potent classical prime} if whenever $a,b\in R$ and $m\in M$ with $abm\in N^n$, then $am\in N$ or $bm\in N$.
\end{definition}

\begin{proposition}
Let $M$ be a multiplication $R$-module. If $N$ is an $n$-almost classical prime submodule of $M$ for some $n\geq 2$ and $N$ is
a $k$-potent classical prime for some $k\leq n$, then $N$ is a classical prime submodule of $M$.
\end{proposition}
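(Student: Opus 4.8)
The plan is to verify the classical prime condition directly, splitting into two cases according to whether the relevant product falls inside $\phi_n(N)$. First I would fix $a,b\in R$ and $m\in M$ with $abm\in N$, with the aim of concluding that $am\in N$ or $bm\in N$.

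The crucial preliminary observation is that, because $M$ is a multiplication module, the set $\phi_n(N)=(N:_{R}M)^{n-1}N$ attached to the $n$-almost classical prime hypothesis coincides with the $n$-th power $N^n$. Writing $I=(N:_{R}M)$, we have $N=IM$ by the identity $N=(N:_{R}M)M$ recorded in the introduction for multiplication modules; hence $(N:_{R}M)^{n-1}N=I^{n-1}(IM)=I^{n}M=N^{n}$, using the definition of the product of submodules (and its independence from the chosen presentations). Since $k\leq n$ forces $I^{n}\subseteq I^{k}$ and therefore $N^{n}=I^{n}M\subseteq I^{k}M=N^{k}$, we obtain the containment $\phi_n(N)=N^{n}\subseteq N^{k}$.

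With this in hand the argument is a clean dichotomy. If $abm\notin\phi_n(N)$, then $abm\in N\backslash\phi_n(N)$, and the hypothesis that $N$ is $n$-almost classical prime (that is, $\phi_n$-classical prime) yields $am\in N$ or $bm\in N$. If instead $abm\in\phi_n(N)$, then $abm\in N^{n}\subseteq N^{k}$ by the containment just established, and the hypothesis that $N$ is $k$-potent classical prime yields $am\in N$ or $bm\in N$. In either case the classical prime condition is satisfied, so $N$ is a classical prime submodule of $M$.

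I expect the only genuine content to lie in the identity $\phi_n(N)=N^{n}$; once that reduction is made, the two hypotheses cover precisely the two complementary cases and nothing further is needed. The main point to handle carefully is the correct invocation of $N=(N:_{R}M)M$ together with the presentation-independence of the submodule product, so that the power $N^{n}$ is unambiguous and the chain $N^{n}\subseteq N^{k}$ is justified.
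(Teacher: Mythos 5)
Your proof is correct and follows essentially the same route as the paper's: both arguments rest on the identification $\phi_n(N)=(N:_RM)^{n-1}N=N^n$ for a multiplication module and the containment $N^n\subseteq N^k$, then split into two complementary cases handled by the $n$-almost and $k$-potent hypotheses respectively. The only cosmetic difference is that you case-split on membership in $N^n$ while the paper splits on membership in $N^k$; the two dichotomies are interchangeable here, and your explicit justification of $\phi_n(N)=N^n$ is a detail the paper leaves implicit.
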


\begin{proof}
Assume that $N$ is an $n$-almost classical prime submodule of $M$. Let $abm\in N$ for some $a,b\in R$ and $m\in M$. If $abm\not\in N^k$, then $abm\not\in N^n$. In this case, we are done since $N$ is an $n$-almost classical prime submodule. So assume that $abm\in N^k$. Hence we get $am\in N$ or $bm\in N$, since $N$ is a $k$-potent classical prime submodule of $M$.
\end{proof}

\begin{proposition}
Let $M$ be a cyclic $R$-module and $\phi :S(M)\rightarrow S(M)\cup
\{\emptyset \}$ be a function. Then a proper submodule $N$ of $M$ 
is a $\phi$-prime submodule if and only if it is a $\phi$-classical prime submodule.
\end{proposition}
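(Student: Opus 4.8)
The plan is to treat the two implications separately, with the forward direction being essentially free and the real content concentrated in the converse, where the cyclicity of $M$ is used.

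For the implication that every $\phi$-prime submodule is $\phi$-classical prime, I would simply invoke Proposition \ref{abs-class}(1), which establishes exactly this for an arbitrary $R$-module. No hypothesis on $M$ is needed for this direction, so cyclicity plays no role here.

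For the converse, suppose $N$ is $\phi$-classical prime and write $M=Rx$ for a generator $x$ of the cyclic module. Given $a\in R$ and $m\in M$ with $am\in N\backslash\phi(N)$, I would express $m=rx$ for some $r\in R$, so that $arx=am\in N\backslash\phi(N)$. The key step is then to apply the $\phi$-classical prime hypothesis to the two ring elements $a,r$ and the single module element $x$: since $(ar)x\in N\backslash\phi(N)$, it yields $ax\in N$ or $rx\in N$. In the second case $m=rx\in N$ directly, while in the first case $ax\in N$ forces $aM=a(Rx)=R(ax)\subseteq N$, i.e. $a\in(N:_RM)$. In either case the defining condition for a $\phi$-prime submodule is satisfied.

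I do not anticipate a genuine obstacle. The only point requiring care is the reduction $aM=R(ax)$, which is precisely where cyclicity enters: it upgrades the pointwise containment $ax\in N$ to the module-wide containment $a\in(N:_RM)$. This also explains why the hypothesis cannot be dropped, since for a non-cyclic $M$ the condition $am\in N$ for one element $m$ need not propagate to all of $M$.
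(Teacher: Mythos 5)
Your proposal is correct and follows essentially the same route as the paper: the forward direction is delegated to Proposition \ref{abs-class}(1), and for the converse you write the arbitrary element as $m=rx$ in terms of a generator $x$, apply the $\phi$-classical prime condition to the pair $a,r$ acting on $x$, and use cyclicity to promote $ax\in N$ to $a\in(N:_RM)$. This matches the paper's argument up to a relabelling of the generator and the scalars.
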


\begin{proof}
By Proposition \ref{abs-class}(1), the ``only if'' part holds.
Let $M=Rm$ for some $m\in M$ and $N$ be a $\phi$-classical prime submodule of $M$. Suppose that 
$rx\in N\backslash\phi(N)$ for some $r\in R$ and $x\in M$. Then there exists an element 
$s\in R$ such that $x=sm$. Therefore $rx=rsm\in N\backslash\phi(N)$ and since $N$ is a
$\phi$-classical prime submodule, $rm\in N$ or $sm\in N$. Hence $r\in (N:_RM)$
or $x\in N$. Consequently $N$ is a $\phi$-prime submodule.
\end{proof}

\begin{theorem}
Let $f:M\to M'$ be an epimorphism of $R$-modules and let $\phi:S(M)\to S(M)\cup \lbrace \emptyset \rbrace$ and $\phi':S(M')\to S(M')\cup \lbrace \emptyset \rbrace$ be functions. Then the following conditions hold:
\begin{enumerate}
\item If $N'$ is a $\phi'$-classical prime submodule of $M'$ and $\phi(f^{-1}(N'))=f^{-1}(\phi'(N'))$, then $f^{-1}(N')$ is a 
$\phi$-classical prime submodule of $M$.
\item If $N$ is a $\phi$-calssical prime submodule of $M$ cotaining $Ker(f)$ and $\phi'(f(N))=f(\phi(N))$, then $f(N)$ is a $\phi'$-classical prime submodule of $M'$.
\end{enumerate}
\end{theorem}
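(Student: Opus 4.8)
The plan is to verify the defining condition for $\phi$-classical primeness directly in each case, transporting a product $abm$ forward or backward along $f$ and invoking the compatibility hypotheses $\phi(f^{-1}(N'))=f^{-1}(\phi'(N'))$ and $\phi'(f(N))=f(\phi(N))$ to control the ``$\backslash\,\phi$'' part of the condition. Before doing so, in each part I would first check that the relevant submodule is proper: for (1), if $f^{-1}(N')=M$ then surjectivity of $f$ forces $N'=f(M)=M'$, contradicting that $N'$ is proper; for (2), the hypothesis $\mathrm{Ker}(f)\subseteq N$ gives $f^{-1}(f(N))=N$, so $f(N)=M'$ would yield $N=f^{-1}(M')=M$, again a contradiction.

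For part (1), I would take $a,b\in R$ and $m\in M$ with $abm\in f^{-1}(N')\backslash\phi(f^{-1}(N'))$. Rewriting $\phi(f^{-1}(N'))$ as $f^{-1}(\phi'(N'))$ via the hypothesis, this says $f(abm)\in N'$ and $f(abm)\notin\phi'(N')$; since $f$ is $R$-linear, $f(abm)=ab\,f(m)$, so $ab\,f(m)\in N'\backslash\phi'(N')$. The $\phi'$-classical primeness of $N'$ then gives $a\,f(m)\in N'$ or $b\,f(m)\in N'$, i.e.\ $f(am)\in N'$ or $f(bm)\in N'$, which is exactly $am\in f^{-1}(N')$ or $bm\in f^{-1}(N')$.

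For part (2), I would take $a,b\in R$ and $m'\in M'$ with $abm'\in f(N)\backslash\phi'(f(N))$, and use surjectivity to write $m'=f(m)$. The key step here is to lift the membership $ab\,f(m)=f(abm)\in f(N)$ back to $abm\in N$: choosing $n\in N$ with $f(abm)=f(n)$ gives $abm-n\in\mathrm{Ker}(f)\subseteq N$, whence $abm\in N$. Next, the hypothesis $\phi'(f(N))=f(\phi(N))$ shows $abm\notin\phi(N)$ (otherwise $f(abm)\in f(\phi(N))=\phi'(f(N))$, contrary to assumption), so $abm\in N\backslash\phi(N)$. Applying the $\phi$-classical primeness of $N$ yields $am\in N$ or $bm\in N$, and pushing forward through $f$ gives $am'\in f(N)$ or $bm'\in f(N)$.

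I expect the only genuinely delicate point to be the kernel argument in part (2): deducing $abm\in N$ from $f(abm)\in f(N)$ relies essentially on $\mathrm{Ker}(f)\subseteq N$, and symmetrically the equality $\phi'(f(N))=f(\phi(N))$ is what lets one transfer the non-membership in $\phi$ back and forth without loss. Everything else is a routine transport of the classical-prime condition along the homomorphism; in particular the two compatibility equalities on $\phi$ and $\phi'$ are precisely the conditions needed to make the ``$\backslash\,\phi$'' clauses correspond under $f$.
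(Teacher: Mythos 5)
Your proposal is correct and follows essentially the same route as the paper's own proof: transport $abm$ along $f$ in part (1) and lift $abm'$ back via surjectivity and $\mathrm{Ker}(f)\subseteq N$ in part (2), using the two compatibility equalities to match the ``$\backslash\,\phi$'' clauses. The only difference is that you spell out the properness checks and the kernel-lifting step, which the paper states more tersely.
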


\begin{proof}
(1) Since $f$ is epimorphism, $f^{-1}(N')$ is a proper submodule of $M$. Let $a,b\in R$ and $m\in M$ such that $abm\in f^{-1}(N')\backslash\phi(f^{-1}(N'))$. Since $abm\in f^{-1}(N')$, $abf(m)\in N'$. Also,
$\phi(f^{-1}(N'))=f^{-1}(\phi'(N'))$ implies that $abf(m)\notin\phi'(N')$.
Thus $abf(m)\in N'\backslash\phi'(N')$.
Hence $af(m)\in N'$ or $bf(m)\in N'$ and thus $am\in f^{-1}(N')$ or $bm\in f^{-1}(N')$. So, 
we conclude that $f^{-1}(N')$ is a $\phi$-classical prime submodule of $M$.

(2) Let $a,b\in R$ and $m'\in M'$ such that $abm'\in f(N)\backslash\phi'(f(N))$. 
Since $f$ is epimorphism, there exists $m\in M$ such that $m'= f(m)$.
Therefore $f(abm)\in f(N)$ and so $abm\in N$, because ${\rm Ker}(f)\subseteq N$.
Since $\phi'(f(N))=f(\phi(N))$, then $abm\notin\phi(N)$. Hence $abm\in N\backslash\phi(N)$. 
It implies that $am\in N$ or $bm\in N$. Thus $am'\in f(N)$ or $bm'\in f(N)$. 
\end{proof}

\begin{theorem}
Let $M$ be an $R$-module and $\phi :S(M)\rightarrow S(M)\cup
\{\emptyset \}$ be a function. Suppose that $N$ is a $\phi$-classical prime submodule of $M$. Then
\begin{enumerate} 
\item[(1)] For every $a,b\in R$ and $m\in M$, $(N:_{R}abm)=(\phi(N):_{R}abm)\cup(\phi(N):_{R}am)\cup(\phi(N):_{R}bm)$;
\item[(2)] If $R$ is a $u$-ring, then for every $a,b\in R$ and $m\in M$, $(N:_{R} abm )=(\phi(N):_R abm)$ or $(N:_{R} abm )=(N:_R am)$ or $(N:_{R} abm )=(N:_R bm)$
\end{enumerate}
\end{theorem}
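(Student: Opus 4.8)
The plan is to prove (1) by a double inclusion and then extract (2) from it using the $u$-ring hypothesis. Throughout I use the standing convention $\phi(N)\subseteq N$, and I note that each expression appearing here is a colon of a submodule into an element, hence an ideal of $R$. I also read the last two summands on the right of (1) as the colons into $N$ (i.e. $(N:_R am)$ and $(N:_R bm)$), since this is exactly what is needed to feed (1) into the $u$-ring argument for (2), where these same two colons into $N$ appear.

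For the inclusion $\supseteq$ in (1), I would argue term by term. If $rabm\in\phi(N)$, then $rabm\in\phi(N)\subseteq N$, so $r\in(N:_R abm)$. If $ram\in N$, then $rabm=b(ram)\in N$ by closure of $N$ under scalar multiplication, so again $r\in(N:_R abm)$; symmetrically, $rbm\in N$ gives $rabm=a(rbm)\in N$. Thus each of the three colon ideals on the right sits inside $(N:_R abm)$. For the forward inclusion, I take $r\in(N:_R abm)$, so that $rabm\in N$, and split into two cases. If $rabm\in\phi(N)$ then $r\in(\phi(N):_R abm)$ and we are finished. Otherwise $rabm\in N\backslash\phi(N)$, and rewriting this as $ab(rm)\in N\backslash\phi(N)$ I apply the $\phi$-classical prime property to the pair $a,b\in R$ and the element $rm\in M$; this yields $a(rm)\in N$ or $b(rm)\in N$, that is, $r\in(N:_R am)$ or $r\in(N:_R bm)$. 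The only subtle point in (1) is bookkeeping: one must absorb the scalar $r$ into the module element $m$ before invoking the definition, so that the $\phi$-classical prime hypothesis can be used with the original scalars $a,b$.

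For (2), the work is essentially done by (1). I first observe that $(\phi(N):_R abm)$, $(N:_R am)$ and $(N:_R bm)$ are all ideals of $R$, and that by the $\supseteq$ computation above each of them is contained in the ideal $(N:_R abm)$. Statement (1) then asserts precisely that $(N:_R abm)$ is contained in the union of these three ideals. Since $R$ is a $u$-ring, an ideal contained in a finite union of ideals must be contained in one of them; hence $(N:_R abm)$ is contained in one of the three, and combining this with the reverse containment already recorded forces equality with that one. The main thing to get right here is not any computation but the verification that the hypotheses of the $u$-ring lemma genuinely hold: namely that the three pieces are honest ideals and that each lies inside $(N:_R abm)$, so that the finite union may legitimately be collapsed to a single term, producing exactly the trichotomy in (2).
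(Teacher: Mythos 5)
Your proof is correct and follows essentially the same route as the paper: pass from $r\in(N:_Rabm)$ to $ab(rm)\in N$, split on whether $ab(rm)\in\phi(N)$, apply the $\phi$-classical prime property to $a,b,rm$, and then collapse the union via the $u$-ring property for (2). You also rightly read the last two terms of (1) as $(N:_Ram)$ and $(N:_Rbm)$ -- the displayed statement has a typo there, and the paper's own proof establishes exactly the version you proved.
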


\begin{proof}
(1) Let $a,b\in R$ and $m\in M$. Suppose that $r\in(N:_{R}abm)$, the $ab(rm)\in N$. If $ab(rm)\in\phi(N)$, then $r \in (\phi(N):_R abm)$. Therefore we assume that $ab(rm)\notin\phi(N)$. So, either $a(rm)\in N$ or $b(rm)\in N$. Hence, either $r\in (N:_R am)$ or $r\in (N:_R bm)$. Consequently, $(N:_R abm)=(\phi(N):_R abm)\cup(N:_R am)\cup(N:_R bm)$.

(2) Apply part (1).
\end{proof}

Let $M$ be an $R$-module and $N$ a submodule of $M$. 
For every $a\in R$, $\{m\in M\mid am\in N\}$ is denoted by $(N:_M a)$. 
It is easy to see that $(N:_Ma)$ is a submodule of $M$ containing 
$N$.

In the next theorem we characterize $\phi$-classical prime submodules.
\begin{theorem}\label{main}
Let $M$ be an $R$-module, $\phi :S(M)\rightarrow S(M)\cup
\{\emptyset \}$ a function and $N$ be a proper submodule of $M$.
The following conditions are equivalent:
\begin{enumerate}
\item $N$ is $\phi$-classical prime;

\item For every $a,b\in R$, $(N:_Mab)=(\phi(N):_Mab)\cup(N:_Ma)\cup(N:_Mb)$;

\item For every $a\in R$ and $m\in M$ with $am\notin N$, $(N:_Ram)=(\phi(N):_Ram)\cup(N:_Rm)$;

\item For every $a\in R$ and $m\in M$ with $am\notin N$, $(N:_Ram)=(\phi(N):_Ram)$
or $(N:_Ram)=(N:_Rm)$;

\item For every $a\in R$ and every ideal $I$ of $R$ and $m\in M$ with $aIm\subseteq N$
and $aIm\nsubseteq\phi(N)$, either $am\in N$ or $Im\subseteq N$;

\item For every ideal $I$ of $R$ and $m\in M$ with $Im\nsubseteq N$, 
$(N:_RIm)=(\phi(N):_RIm)$ or $(N:_RIm)=(N:_Rm)$;

\item For every ideals $I,~J$ of $R$ and $m\in M$ with $IJm\subseteq N$
and $IJm\nsubseteq\phi(N)$, either $Im\subseteq N$ or $Jm\subseteq N$.
\end{enumerate}
\end{theorem}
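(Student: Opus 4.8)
The plan is to show that conditions (2), (3), (4), (6) are essentially notational repackagings of the defining property, and to concentrate the real work on the passage from the single-element formulation (1) to the ideal formulation (7). Throughout I will use the elementary fact that an abelian group which is the union of two of its subgroups must equal one of them; this is exactly what lets me replace every ``$\cup$'' appearing in (3) and (6) by an ``or'', and it is why no $u$-ring hypothesis is needed here (only unions of two subobjects occur).

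First the bookkeeping. In (2) the containments $(\phi(N):_Mab)\subseteq(N:_Mab)$, $(N:_Ma)\subseteq(N:_Mab)$ and $(N:_Mb)\subseteq(N:_Mab)$ are automatic from $\phi(N)\subseteq N$ and the submodule property, so the asserted equality is equivalent to its ``$\subseteq$'' half, which says precisely that $abm\in N\backslash\phi(N)$ forces $am\in N$ or $bm\in N$; that is (1), so (1)$\Leftrightarrow$(2). The same unwinding gives (1)$\Leftrightarrow$(3): reading $ram$ as the product of $r$, $a$ and $m$, the ``$\subseteq$'' half of (3) is the $\phi$-classical prime condition applied to pairs $(r,a)$ with $am\notin N$, and conversely (3), with the role of $a$ played by $b$, recovers the definition. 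Applying the two-subgroup fact to $(\phi(N):_Ram)\subseteq(N:_Ram)\supseteq(N:_Rm)$ turns the union in (3) into the dichotomy (4), so (3)$\Leftrightarrow$(4). For the ideal conditions, specialising to principal ideals gives the easy directions: taking $I=Ra$, $J=Rb$ in (7) recovers the definition (1); taking $J=Ra$ yields (5) (since then $IJm=aIm$ and $Jm\subseteq N\Leftrightarrow am\in N$), and taking $I=Rb$ in (5) again recovers (1), so (7)$\Rightarrow$(5)$\Rightarrow$(1); finally (5)$\Leftrightarrow$(6) is the two-subgroup rewriting applied to $(\phi(N):_RIm)\subseteq(N:_RIm)\supseteq(N:_Rm)$. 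Everything therefore reduces to proving (1)$\Rightarrow$(7).

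So suppose $N$ is $\phi$-classical prime, that $IJm\subseteq N$ and $IJm\nsubseteq\phi(N)$, and assume toward a contradiction that $Im\nsubseteq N$ and $Jm\nsubseteq N$; I will show $IJm\subseteq\phi(N)$. Fix witnesses $a_0\in I$ with $a_0m\notin N$ and $b_0\in J$ with $b_0m\notin N$. The base observation is that whenever $a\in I$, $b\in J$ satisfy $am\notin N$ and $bm\notin N$, then, since $abm\in N$ and $N$ is $\phi$-classical prime, we must have $abm\in\phi(N)$ (otherwise $am\in N$ or $bm\in N$). In particular $a_0b_0m\in\phi(N)$, and also $ab_0m\in\phi(N)$ for every $a\in I$ with $am\notin N$, and $a_0bm\in\phi(N)$ for every $b\in J$ with $bm\notin N$. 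Now take arbitrary $a\in I$, $b\in J$ and argue by cases. If $am\notin N$ and $bm\notin N$, this is the base observation. If $am\in N$ but $bm\notin N$, then $(a+a_0)m\notin N$, so $(a+a_0)bm\in\phi(N)$ and $a_0bm\in\phi(N)$, whence $abm=(a+a_0)bm-a_0bm\in\phi(N)$; the case $am\notin N$, $bm\in N$ is symmetric using $b+b_0$. Finally, if $am\in N$ and $bm\in N$, apply the base observation to $a+a_0$ and $b+b_0$ and expand $(a+a_0)(b+b_0)m=abm+ab_0m+a_0bm+a_0b_0m$; the three terms $a_0b_0m$, $ab_0m$, $a_0bm$ all lie in $\phi(N)$ by the previous cases, forcing $abm\in\phi(N)$ as well. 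Thus $IJm\subseteq\phi(N)$, a contradiction, and (1)$\Rightarrow$(7) is proved.

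The main obstacle is exactly this last implication: the naive classical-prime argument (pick witnesses and apply primeness to $a_0b_0m$) breaks down because the hypothesis is only available off $\phi(N)$, and the four-case additive (coset-shifting) argument above is precisely what is needed to upgrade the pointwise information into the global containment $IJm\subseteq\phi(N)$. Once (1)$\Rightarrow$(7) is in hand, all seven conditions are linked and the theorem follows.
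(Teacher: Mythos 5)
Your proof is correct, but the route through the substantive implication differs from the paper's. The paper arranges the seven conditions in a single cycle $(1)\Rightarrow(2)\Rightarrow\cdots\Rightarrow(7)\Rightarrow(1)$ and never performs your four-case additive argument: its key step is $(4)\Rightarrow(5)$, where from $aIm\subseteq N$ and $aIm\nsubseteq\phi(N)$ it deduces $I\subseteq(N:_Ram)$ and $I\nsubseteq(\phi(N):_Ram)$, so that when $am\notin N$ the dichotomy in (4) forces $(N:_Ram)=(N:_Rm)$ and hence $Im\subseteq N$; the passage from one ideal factor to two is then a repetition of the same trick via (6). In other words, the paper lets the ``union of two subgroups'' dichotomy do the lifting from elements to ideals one factor at a time, so the coset-shifting never has to be made explicit. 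You instead prove $(1)\Rightarrow(7)$ head-on by the Anderson--Smith-style case analysis (shifting by witnesses $a_0,b_0$ and expanding $(a+a_0)(b+b_0)m$), which is longer but more self-contained and makes visible exactly where the submodule structure of $\phi(N)$ is used; it also explains *why* the naive classical-prime argument fails off $\phi(N)$, which the paper's proof hides inside the colon-ideal formalism. Your bookkeeping equivalences $(1)\Leftrightarrow(2)$, $(1)\Leftrightarrow(3)\Leftrightarrow(4)$, $(7)\Rightarrow(5)\Rightarrow(1)$ and $(5)\Leftrightarrow(6)$ all check out (the containments $(N:_Rm)\subseteq(N:_Ram)$ and $(\phi(N):_Ram)\subseteq(N:_Ram)$ needed for the two-subgroup step are indeed automatic), so the theorem is fully established by your argument.
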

\begin{proof}
(1)$\Rightarrow$(2) Suppose that $N$ is a $\phi$-classical prime submodule of $M$. Let $m\in
( N:_Mab)$. Then $abm\in N$. If $abm\in\phi(N)$, then $m\in(\phi(N):_Mab)$. Assume that $abm\notin\phi(N)$. 
Hence $am\in N$ or $bm\in N$. Therefore $m\in\left( N:_Ma\right)$ or $m\in( N:_Mb)$. 
Consequently, $\left(N:_Mab\right)=(\phi(N):_Mab)\cup\left(N:_Ma\right)\cup\left( N:_Mb\right)$.\newline
(2)$\Rightarrow$(3) 
Let $am\notin N$ for some $a\in R$ and $m\in M$. Assume that $x\in(N:_Ram)$.
Then $axm\in N$, and so $m\in(N:_Max)$. Since $am\notin N$, then $m\notin(N:_Ma)$.
Thus by part (2), $m\in(\phi(N):_Max)$ or $m\in(N:_Mx)$, whence $x\in(\phi(N):_Ram)$
or $x\in(N:_Rm)$. Therefore $(N:_Ram)=(\phi(N):_Ram)\cup(N:_Rm)$.\newline
(3)$\Rightarrow$(4) By the fact that if an ideal (a subgroup) is the union
of two ideals (two subgroups), then it is equal to one of them.\newline
(4)$\Rightarrow$(5) Let for some $a\in R$, an ideal $I$ of $R$, $m\in M$, we have that $aIm\subseteq N$ and $aIm\nsubseteq\phi(N)$.
Hence $I\subseteq(N:_Ram)$ and $I\nsubseteq(\phi(N):_Ram)$.
If $am\in N$, then we are done. So, assume that $am\notin N$.
Therefore by part (4) we have that $I\subseteq(N:_Rm)$, i.e., $Im\subseteq N$.\newline
(5)$\Rightarrow$(6)$\Rightarrow$(7) Have proofs similar to that of the previous implications.\newline
(7)$\Rightarrow$(1) Is trivial.
\end{proof}

\begin{remark}\label{multi}
Let $M$ be a multiplication $R$-module and $K,L$ be submodules of $M$.
Then there are ideals $I,J$ of $R$ such that $K=IM$ and $L=JM$. Thus
$KL=IJM=IL$. In particular $KM=IM=K$. Also, for any $m\in M$ we define
$Km:=KRm$. Hence $Km=IRm=Im$. 
\end{remark}

\begin{theorem}
Let $M$ be a multiplication $R$-module, $N$ be a proper submodule of $M$ and $\phi :S(M)\rightarrow S(M)\cup
\{\emptyset \}$ be a function.
Then the following conditions are equivalent:
\begin{enumerate}
\item[(1)] N is a $\phi$-classical prime submodule of M;
\item[(2)] If $N_1N_2m\subseteq N$ for some submodules $N_1$, $N_2$ of M and $m\in M$ such that $N_1N_2m\not\subseteq \phi(N)$, then either $N_1m\subseteq N$ or $N_2m\subseteq N$
\end{enumerate}
\end{theorem}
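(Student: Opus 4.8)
The plan is to deduce both implications from the ideal-theoretic characterisation in Theorem~\ref{main}(7), using the multiplication-module dictionary recorded in Remark~\ref{multi} to pass back and forth between submodules $N_1,N_2$ of $M$ and the ideals $I,J$ of $R$ for which $N_1=IM$ and $N_2=JM$.

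First I would prove $(1)\Rightarrow(2)$. Take submodules $N_1,N_2$ of $M$ and $m\in M$ with $N_1N_2m\subseteq N$ but $N_1N_2m\nsubseteq\phi(N)$, and write $N_1=IM$, $N_2=JM$. By Remark~\ref{multi} we have $N_1N_2=IJM$, and applying the convention $Km=KRm$ together with the identity $KL=IL$ from that remark gives $N_1N_2m=(IJM)m=IJm$, as well as $N_1m=Im$ and $N_2m=Jm$. Hence the hypotheses read $IJm\subseteq N$ and $IJm\nsubseteq\phi(N)$, so Theorem~\ref{main}(7) (which applies because $N$ is $\phi$-classical prime) yields $Im\subseteq N$ or $Jm\subseteq N$, that is, $N_1m\subseteq N$ or $N_2m\subseteq N$.

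For the converse $(2)\Rightarrow(1)$ I would verify condition~(7) of Theorem~\ref{main}, which suffices to conclude that $N$ is $\phi$-classical prime. Given ideals $I,J$ of $R$ and $m\in M$ with $IJm\subseteq N$ and $IJm\nsubseteq\phi(N)$, set $N_1=IM$ and $N_2=JM$. The very same computation via Remark~\ref{multi} shows $N_1N_2m=IJm$, $N_1m=Im$ and $N_2m=Jm$, so the hypotheses of~(2) hold; applying~(2) gives $N_1m\subseteq N$ or $N_2m\subseteq N$, i.e.\ $Im\subseteq N$ or $Jm\subseteq N$. This is exactly condition~(7), and Theorem~\ref{main} then guarantees that $N$ is a $\phi$-classical prime submodule of $M$.

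The one point requiring care is the translation $N_1N_2m=IJm$ (and likewise $N_1m=Im$, $N_2m=Jm$): it must be read off carefully from the definition of the product of submodules in a multiplication module and the convention $Km:=KRm$ fixed in Remark~\ref{multi}. Once this dictionary is set up, each direction is an immediate invocation of Theorem~\ref{main}(7), so I expect no substantive obstacle beyond bookkeeping with the product conventions.
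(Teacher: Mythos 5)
Your proposal is correct and follows essentially the same route as the paper: both directions reduce to condition (7) of Theorem~\ref{main} via the translation $N_1N_2m=I_1I_2m$, $N_im=I_im$ supplied by Remark~\ref{multi}. The paper's own proof is just a terser version of exactly this argument.
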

\begin{proof}
$(1)\Rightarrow (2)$. Let $N_1N_2m\subseteq N$ for some submodules $N_1$, $N_2$ of $M$ and $m\in M$. Since $M$ is multiplication, there are ideals $I_1$,$I_2$ $\in R$ such that $N_1=I_1M$ and $N_2=I_2M$. Then $N_1N_2m=I_1I_2m\subseteq N$ and  $I_1I_2m\not\subseteq \phi(N)$, so by Theorem \ref{main}, either $I_1m\subseteq N$ or $I_2m\subseteq N$. Therefore $N_1m\subseteq N$ or $N_2m\subseteq N$.
\\
$(2)\Rightarrow (1)$ Suppose that $I_1I_2m\subseteq N$ for some ideals $I_1, I_2$ of $R$ and some $m\in M$. Then it is sufficient to get $N_1=I_1M$ and $N_2=I_2M$ in $(2)$.
\end{proof}

\begin{theorem}\label{main2}
Let $R$ be a $um$-ring, $M$ be an $R$-module and $N$ be a proper submodule of $M$.
Suppose that $\phi :S(M)\rightarrow S(M)\cup\{\emptyset \}$ be a function.
The following conditions are equivalent:
\begin{enumerate}
\item $N$ is $\phi$-classical prime;

\item For every $a,b\in R$, $(N:_Mab)=(\phi(N):_Mab)$ or $(N:_Mab)=(N:_Ma)$
or $(N:_Mab)=(N:_Mb)$;

\item For every $a,b\in R$ and every submodule $L$ of $M$, $abL\subseteq N$ and $abL\nsubseteq\phi(N)$
implies that $aL\subseteq N$ or $bL\subseteq N$;

\item For every $a\in R$ and every submodule $L$ of $M$ with $aL\nsubseteq N$, $(N:_RaL)=(\phi(N):_RaL)$
or $(N:_RaL)=(N:_RL)$;

\item For every $a\in R$, every ideal $I$ of $R$ and every submodule $L$ of $M$, $aIL\subseteq N$ and $aIL\nsubseteq\phi(N)$
implies that $aL\subseteq N$ or $IL\subseteq N$;

\item For every ideal $I$ of $R$ and every submodule $L$ of $M$ with $IL\nsubseteq N$, $(N:_RIL)=(\phi(N):_RIL)$
or $(N:_RIL)=(N:_RL)$;

\item For every ideals $I,~J$ of $R$ and every submodule $L$ of $M$, $IJL\subseteq N$ and $IJL\nsubseteq\phi(N)$
implies that $IL\subseteq N$ or $JL\subseteq N$.
\end{enumerate}
\end{theorem}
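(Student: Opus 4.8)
The plan is to prove the seven conditions equivalent by running the cyclic chain $(1)\Rightarrow(2)\Rightarrow(3)\Rightarrow(4)\Rightarrow(5)\Rightarrow(6)\Rightarrow(7)\Rightarrow(1)$, following the pattern of Theorem \ref{main} but upgrading from a single element $m$ to an arbitrary submodule $L$. The organizing observation is that the "colon" conditions (2), (4), (6) alternate with the "containment" conditions (3), (5), (7), so each implication is either a translation between these two formats or a step that replaces a scalar by an ideal. Two tools do all the work: the elementary fact that a group equal to the union of two subgroups coincides with one of them, and the $um$-ring hypothesis, which lets a submodule that is a \emph{finite} union of submodules collapse onto one summand. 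I would keep $\phi(N)\subseteq N$ in force throughout (as arranged in the introduction) so that the inclusions $(\phi(N):_R\cdot)\subseteq(N:_R\cdot)$ are automatic.

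The crux is $(1)\Rightarrow(2)$, and it is the only place the $um$-ring property is genuinely needed. Since $N$ is $\phi$-classical prime, Theorem \ref{main} (its equivalence with condition (2)) gives, for all $a,b\in R$, the equality $(N:_M ab)=(\phi(N):_M ab)\cup(N:_M a)\cup(N:_M b)$. All four sets here are submodules of $M$, so $(N:_M ab)$ is a submodule equal to a finite union of submodules; because $R$ is a $um$-ring it must equal one of the three, which is precisely (2).

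For the remaining conversions I would argue as follows. $(2)\Rightarrow(3)$: if $abL\subseteq N$ with $abL\nsubseteq\phi(N)$ then $L\subseteq(N:_M ab)$, the alternative $(N:_M ab)=(\phi(N):_M ab)$ is ruled out by $abL\nsubseteq\phi(N)$, and the other two alternatives give $aL\subseteq N$ or $bL\subseteq N$. $(3)\Rightarrow(4)$: for $aL\nsubseteq N$, testing an arbitrary $b\in(N:_R aL)$ against (3) yields $(N:_R aL)=(\phi(N):_R aL)\cup(N:_R L)$, and the two-subgroup fact splits this union. $(4)\Rightarrow(5)$ and $(6)\Rightarrow(7)$ are the "scalar-to-ideal" steps: from $aIL\subseteq N$, $aIL\nsubseteq\phi(N)$ (respectively $IJL\subseteq N$, $IJL\nsubseteq\phi(N)$) one reads off $I\subseteq(N:_R aL)$ but $I\nsubseteq(\phi(N):_R aL)$ (respectively $J\subseteq(N:_R IL)$, $J\nsubseteq(\phi(N):_R IL)$), so assuming the "avoided" containment fails forces the colon condition into the branch $(N:_R aL)=(N:_R L)$ (respectively $(N:_R IL)=(N:_R L)$), whence the desired containment. $(5)\Rightarrow(6)$ mirrors $(3)\Rightarrow(4)$, again producing a two-term union of ideals that collapses. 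Finally $(7)\Rightarrow(1)$ is immediate on taking $I=Ra$, $J=Rb$, $L=Rm$.

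The main obstacle is thus isolated at $(1)\Rightarrow(2)$: everything else is bookkeeping that transports the element-level statement of Theorem \ref{main} to the submodule level, whereas collapsing the three-fold union $(\phi(N):_M ab)\cup(N:_M a)\cup(N:_M b)$ to a single submodule fails over a general ring and is exactly what the $um$-ring hypothesis supplies. The one recurring point demanding care is the $\phi(N)$ bookkeeping: at each conversion I must check that the hypothesis $\cdots\nsubseteq\phi(N)$ is incompatible with the branch $(N:\cdot)=(\phi(N):\cdot)$, and that the two inclusions $(\phi(N):_R\cdot)\subseteq(N:_R\cdot)$ (from $\phi(N)\subseteq N$) and $(N:_R L)\subseteq(N:_R aL)$ (since $N$ is a submodule) hold, so that the unions I write down are honest equalities rather than mere containments.
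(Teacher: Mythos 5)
Your proposal is correct and follows essentially the same route as the paper: the cyclic chain $(1)\Rightarrow(2)\Rightarrow\cdots\Rightarrow(7)\Rightarrow(1)$, with the $um$-ring hypothesis invoked exactly once at $(1)\Rightarrow(2)$ to collapse the three-fold union from Theorem \ref{main}, the two-subgroup union fact handling $(3)\Rightarrow(4)$ and $(5)\Rightarrow(6)$, and the same scalar-to-ideal bookkeeping for the remaining steps. No discrepancies.
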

\begin{proof}
(1)$\Rightarrow$(2) Assume that $N$ is a $\phi$-classical prime submodule of $M$ and $a,b\in R$.
Then by Theorem \ref{main}, $\left(N:_Mab\right)=(\phi(N):_Mab)\cup\left(N:_Ma\right)\cup\left( N:_Mb\right)$.
Since $R$ is a $um$-ring, then $(N:_Mab)=(\phi(N):_Mab)$ or $(N:_Mab)=(N:_Ma)$
or $(N:_Mab)=(N:_Mb)$.\newline
(2)$\Rightarrow$(3) 
Let $abL\subseteq N$ and $abL\nsubseteq\phi(N)$ for some $a,b\in R$ and submodule $L$ of $M$. 
Hence $L\subseteq(N:_Mab)$ and $L\nsubseteq(\phi(N):_Mab)$. Therefore part (2) implies that
$(N:_Mab)=(N:_Ma)$ or $(N:_Mab)=(N:_Mb)$. So, either $L\subseteq(N:_Ma)$ or $L\subseteq(N:_Mb)$, i.e.,
$aL\subseteq N$ or $bL\subseteq N$.\newline
(3)$\Rightarrow$(4) Let $aL\nsubseteq N$ for some $a\in R$ and submodule $L$ of $M$. Suppose that $x\in(N:_RaL)$.
Then $axL\subseteq N$. If $axL\subseteq\phi(N)$, then $x\in(\phi(N):_MaL)$. Now, assume that $axL\nsubseteq\phi(N)$.
Thus by part (3) we have that $xL\subseteq N$, because $aL\nsubseteq N$.
Therefore $x\in(N:_RL)$. Consequently $(N:_RaL)=(\phi(N):_RaL)\cup(N:_RL)$, and then
$(N:_RaL)=(\phi(N):_RaL)$ or $(N:_RaL)=(N:_RL)$.\newline
(4)$\Rightarrow$(5) Let for some $a\in R$, an ideal $I$ of $R$ and submodule $L$ of $M$, we have that $aIL\subseteq N$ and $aIL\nsubseteq\phi(N)$. Hence $I\subseteq(N:_RaL)$ and $I\nsubseteq(\phi(N):_RaL)$.
If $aL\subseteq N$, then we are done. So, assume that $aL\nsubseteq N$.
Therefore by part (4) we have that $I\subseteq(N:_RL)$, i.e., $IL\subseteq N$.\newline
(5)$\Rightarrow$(6)$\Rightarrow$(7) Similar to the previous implications.\newline
(7)$\Rightarrow$(1) Is easy.
\end{proof}

\begin{theorem}\label{col}
Let $R$ be a $um$-ring, $M$ be an $R$-module and $N$ be a proper submodule of $M$. Suppose that $\phi :S(M)\rightarrow S(M)\cup
\{\emptyset \}$ and $\psi :\mathfrak{J}(R)\rightarrow\mathfrak{J}(R)\cup\{\emptyset \}$ be two functions.
If $N$ is a $\phi$-classical prime submodule of $M$, then $( N:_RL)$ is a 
$\psi$-prime ideal of $R$ for every submodule $L$ of $M$ that is not contained in $N$ with $(\phi(N):_RL)\subseteq\psi((N:_RL))$. 
\end{theorem}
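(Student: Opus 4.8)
The plan is to mimic the proof of Theorem~\ref{colon}(1), which handles the special case $L=Rm$, but to replace the elementwise step by the submodule characterization that is available over a $um$-ring, namely Theorem~\ref{main2}. First I would record that $(N:_RL)$ is a proper ideal: since $L\nsubseteq N$ we have $1\cdot L=L\nsubseteq N$, so $1\notin(N:_RL)$, which is the properness needed before speaking of $\psi$-primeness.

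Next, fix a submodule $L$ with $L\nsubseteq N$ and $(\phi(N):_RL)\subseteq\psi((N:_RL))$, and take $a,b\in R$ with $ab\in(N:_RL)\backslash\psi((N:_RL))$. Unwinding the colon notation, this says precisely that $abL\subseteq N$ while $ab\notin\psi((N:_RL))$. The containment hypothesis $(\phi(N):_RL)\subseteq\psi((N:_RL))$ then forces $ab\notin(\phi(N):_RL)$, that is, $abL\nsubseteq\phi(N)$.

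At this point I would have $abL\subseteq N$ together with $abL\nsubseteq\phi(N)$, so I would invoke the $\phi$-classical prime property in its submodule form. Because $R$ is a $um$-ring and $N$ is $\phi$-classical prime, condition (3) of Theorem~\ref{main2} applies and yields $aL\subseteq N$ or $bL\subseteq N$; equivalently $a\in(N:_RL)$ or $b\in(N:_RL)$. Combined with the properness noted above, this is exactly the assertion that $(N:_RL)$ is a $\psi$-prime ideal of $R$.

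The only genuinely delicate point is the role of the $um$-ring hypothesis. For a single element $m$, Theorem~\ref{main} already supplies the analogous step, but to pass from an element to an arbitrary submodule $L$ one needs the equivalence of Theorem~\ref{main2}, whose implication (1)$\Rightarrow$(2) uses the $um$-property to collapse a union of colon submodules onto one of its members. Everything else is a routine translation back and forth between the colon-ideal conditions $ab\in(N:_RL)$, $ab\notin\psi((N:_RL))$ and the submodule containments $abL\subseteq N$, $abL\nsubseteq\phi(N)$, so I expect no further obstacle.
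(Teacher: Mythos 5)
Your proof is correct and follows the same route as the paper: pass from $ab\in(N:_RL)\backslash\psi((N:_RL))$ to $abL\subseteq N$ with $abL\nsubseteq\phi(N)$ via the hypothesis $(\phi(N):_RL)\subseteq\psi((N:_RL))$, then apply Theorem~\ref{main2} to conclude $aL\subseteq N$ or $bL\subseteq N$. You merely make explicit two small points the paper leaves tacit (the properness of $(N:_RL)$ and the exact use of the containment hypothesis), which is fine.
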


\begin{proof}
Suppose that $N$ is a $\phi$-classical prime submodule of $M$ and $L$ is a submodule of $M$ such that $L\nsubseteq N$,
and also $(\phi(N):_RL)\subseteq\psi((N:_RL))$. Let $ab\in \left( N:_RL\right)\backslash\psi(( N:_RL))$ for some $a,b\in R$. Then $abL\subseteq N$ and $abL\nsubseteq\phi(N)$. So by Theorem \ref{main2}, $aL\subseteq N$ or $bL\subseteq N$.  Hence $a\in \left( N:_RL\right)$ or $b\in \left( N:_RL\right) $. Consequently $\left( N:_RL\right) $ is a 
$\psi$-prime ideal of $R$.
\end{proof}

\begin{theorem}
Let $R$ be a $um$-ring, $M$ be a multiplication $R$-module and  $N$ be a proper submodule of $M$.  
Suppose that $\phi :S(M)\rightarrow S(M)\cup\{\emptyset \}$ is a function.
Then the following conditions are equivalent:
\begin{enumerate}
\item[(1)] $N$ is a $\phi$-classical prime submodule of  $M$;
\item[(2)]  If $N_1N_2N_3\subseteq N$  for some submodules $N_1, N_2, N_3$ of $M$ such that $N_1N_2N_3\not\subseteq \phi(N)$, then either $N_1N_3\subseteq N$  or $N_2N_3\subseteq N$;
\item[(3)]  If $N_1N_2\subseteq N$  for some submodules $N_1, N_2$ of $M$ such that $N_1N_2\not\subseteq \phi(N)$, then either $N_1\subseteq N$  or $N_2\subseteq N$;
\item[(4)] $N$ is a $\phi$-prime submodule of $M$.
\end{enumerate}
\end{theorem}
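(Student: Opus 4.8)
The plan is to prove the cycle $(1)\Rightarrow(2)\Rightarrow(3)\Rightarrow(4)\Rightarrow(1)$, using the multiplication hypothesis to translate freely between submodule products and the ideal--submodule formulation of Theorem \ref{main2}. Throughout I would fix presentations $N_i=I_iM$ for ideals $I_i$ of $R$ (available since $M$ is multiplication) and repeatedly invoke the product identities $N_iN_j=I_iI_jM$, $N_iM=N_i$, and $(IM)m=Im$ recorded in Remark \ref{multi}.

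For $(1)\Rightarrow(2)$, suppose $N_1N_2N_3\subseteq N$ and $N_1N_2N_3\nsubseteq\phi(N)$. Associativity of the product gives $N_1N_2N_3=I_1I_2(I_3M)=I_1I_2N_3$. Since $R$ is a $um$-ring, condition $(7)$ of Theorem \ref{main2} (equivalent to $N$ being $\phi$-classical prime) applies to the ideals $I_1,I_2$ and the submodule $L=N_3$: from $I_1I_2N_3\subseteq N$ and $I_1I_2N_3\nsubseteq\phi(N)$ we obtain $I_1N_3\subseteq N$ or $I_2N_3\subseteq N$, that is, $N_1N_3\subseteq N$ or $N_2N_3\subseteq N$. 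For $(2)\Rightarrow(3)$ I would simply specialise $(2)$ to $N_3=M$: since $N_iM=N_i$, the hypothesis $N_1N_2\subseteq N$, $N_1N_2\nsubseteq\phi(N)$ becomes the hypothesis of $(2)$ with $N_3=M$, and its conclusion $N_1M\subseteq N$ or $N_2M\subseteq N$ reads $N_1\subseteq N$ or $N_2\subseteq N$.

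The step I expect to be the crux is $(3)\Rightarrow(4)$, because it must manufacture the element-wise $\phi$-prime property out of the purely submodule-theoretic statement $(3)$. Given $a\in R$ and $m\in M$ with $am\in N\backslash\phi(N)$, I would put $N_1=aM$ and $N_2=Rm$ and evaluate their product via Remark \ref{multi}: $N_1N_2=(aM)(Rm)=(aR)m=R(am)$, the cyclic submodule generated by $am$. Since $am\in N$ we get $N_1N_2=R(am)\subseteq N$, and since $am\notin\phi(N)$ while $am\in R(am)$ we get $N_1N_2\nsubseteq\phi(N)$. Hence $(3)$ forces $aM\subseteq N$ or $Rm\subseteq N$, i.e. $a\in(N:_RM)$ or $m\in N$, which is exactly the definition of a $\phi$-prime submodule.

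Finally, $(4)\Rightarrow(1)$ is immediate from Proposition \ref{abs-class}(1), which states that every $\phi$-prime submodule is $\phi$-classical prime. The only real care needed is in the product bookkeeping: once the identities $N_1N_2N_3=I_1I_2N_3$ and $(aM)(Rm)=R(am)$ are justified from Remark \ref{multi}, each of the four implications collapses to a direct invocation of Theorem \ref{main2} or Proposition \ref{abs-class}.
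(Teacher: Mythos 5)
Your proposal is correct and follows essentially the same route as the paper: the same cycle $(1)\Rightarrow(2)\Rightarrow(3)\Rightarrow(4)\Rightarrow(1)$, with $(1)\Rightarrow(2)$ reduced to condition (7) of Theorem \ref{main2} via presentations $N_i=I_iM$, $(2)\Rightarrow(3)$ by taking $N_3=M$, and $(3)\Rightarrow(4)$ by applying $(3)$ to the submodules $aM$ and $Rm$, whose product is $Ram$ by Remark \ref{multi}. The only cosmetic difference is that the paper closes the loop with ``by definition'' where you cite Proposition \ref{abs-class}(1); both are fine.
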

\begin{proof}
(1)$\Rightarrow$ (2) Let $N_1N_2N_3\subseteq N$  for some submodules $N_1, N_2, N_3$ of $M$ such that $N_1N_2N_3\not\subseteq \phi(N)$. Since $M$ is multiplication, there are ideals $I_1,I_2$ of $R$ such that $N_1=I_1M$ and $N_2=I_2M$. Therefore $N_1N_2N_3=I_1I_2N_3\subseteq N$ and $I_1I_2N_3\nsubseteq\phi(N)$, and so by Theorem \ref{main2},  $I_1N_3\subseteq N$ or $I_2N_3\subseteq N$. Hence, $N_1N_3\subseteq N$  or $N_2N_3\subseteq N$.
\\
(2)$\Rightarrow$(3) It is obvious.
\\
(3)$\Rightarrow$(4)  Let $r\in R$ and $m\in M$ with $rm\in N\backslash \phi(N)$. Thus $rRm\subseteq N$ but $rRm\not\subseteq \phi(N)$.  
By Remark \ref{multi}, it follows that $rMRm\subseteq N$. Therefore $rM\subseteq  N$ or $Rm\subseteq N$. Hence $r\in(N:_RM)$ or $m\in N$.\\
(4)$\Rightarrow$(1) By definition. 
\end{proof}

\begin{corollary}
Let $R$ be a $um$-ring, $M$ be a multiplication $R$-module and $N$ be a proper submodule of $M$. Suppose that $\phi :S(M)\rightarrow S(M)\cup\{\emptyset \}$ and $\psi :\mathfrak{J}(R)\rightarrow\mathfrak{J}(R)\cup\{\emptyset \}$ be two functions
with $(\phi(N):_RM)=\psi((N:_RM))$.
Then $N$ is a $\phi$-prime ($\phi$-classical prime) submodule of $M$ if and only if $( N:_RM)$ is a 
$\psi$-prime ideal of $R$.  
\end{corollary}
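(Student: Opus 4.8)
The plan is to reduce everything to $\phi$-prime submodules. By the preceding theorem, under the standing hypotheses ($R$ a $um$-ring and $M$ multiplication), $N$ is $\phi$-classical prime if and only if it is $\phi$-prime; this is exactly the content of the parenthetical in the statement. Hence it suffices to prove that $N$ is a $\phi$-prime submodule if and only if $I:=(N:_RM)$ is a $\psi$-prime ideal, keeping in mind the standing identification $\psi(I)=\psi((N:_RM))=(\phi(N):_RM)$.

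For the forward implication I would not argue by hand at all, but invoke Theorem \ref{col} with $L=M$. Since $N$ is proper we have $M\nsubseteq N$, and the compatibility hypothesis gives in particular $(\phi(N):_RM)\subseteq\psi((N:_RM))$, so Theorem \ref{col} immediately yields that $(N:_RM)$ is $\psi$-prime. (Equivalently one could feed $N_1=aM$, $N_2=bM$ into condition (3) of the preceding theorem, since then $(aM)(bM)=abM\subseteq N$ and $abM\nsubseteq\phi(N)$ whenever $ab\in I\backslash\psi(I)$; but citing Theorem \ref{col} is cleaner.)

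The substance is in the converse. Assume $I$ is $\psi$-prime and take $a\in R$, $m\in M$ with $am\in N\backslash\phi(N)$; the goal is $m\in N$ or $a\in I$. Here I would use that $M$ is multiplication to write $Rm=JM$ for some ideal $J$ of $R$. Because $N$ is a submodule, $am\in N$ forces $aRm=(aJ)M\subseteq N$, so $aJ\subseteq I$; and since $am\in(aJ)M$ while $am\notin\phi(N)$, we get $(aJ)M\nsubseteq\phi(N)$, that is, $aJ\nsubseteq\psi(I)$. If $a\in I$ we are done, so suppose $a\notin I$; it then remains to prove $J\subseteq I$, for this gives $Rm=JM\subseteq IM\subseteq N$ and hence $m\in N$.

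The main obstacle is precisely this last step: upgrading the element-wise $\psi$-primeness of $I$ to the ideal-level implication $aJ\subseteq I,\ aJ\nsubseteq\psi(I),\ a\notin I\Rightarrow J\subseteq I$. For $j\in J$ with $aj\notin\psi(I)$ this is immediate, since $aj\in I\backslash\psi(I)$ and $a\notin I$ force $j\in I$. The delicate case $aj\in\psi(I)$ I would settle by a shifting trick: choose $j_0\in J$ with $aj_0\notin\psi(I)$ (possible because $aJ\nsubseteq\psi(I)$), so that $j_0\in I$ by the first case; then $a(j+j_0)=aj+aj_0\in I\backslash\psi(I)$ (using that $\psi(I)$ is an ideal, hence closed under subtraction, so $aj_0=a(j+j_0)-aj$ cannot lie in $\psi(I)$), whence $j+j_0\in I$ and therefore $j\in I$. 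This yields $J\subseteq I$ and finishes the proof. The only genuinely separate situation is $\psi(I)=\emptyset$ (the prime/classical-prime case), where the case $aj\in\psi(I)$ never occurs and the shifting step is vacuous.
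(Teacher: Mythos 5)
Your proof is correct and follows essentially the same route as the paper's: the forward direction by invoking Theorem \ref{col} with $L=M$, and the converse by using the multiplication hypothesis to write submodules as $JM$ and transferring the $\psi$-primeness of $(N:_RM)$ back to the module. The only difference is one of detail: the paper simply asserts the ideal-level property of $\psi$-prime ideals ($IJ\subseteq P$ and $IJ\nsubseteq\psi(P)$ imply $I\subseteq P$ or $J\subseteq P$, a known fact from Anderson--Batanieh) without proof, whereas you prove the principal-ideal case you need from scratch via the standard shifting trick, which makes your argument more self-contained but not essentially different.
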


\begin{proof}
By Theorem \ref{col}, the ``only if'' part holds. Suppose that $( N:_RM)$ is a 
$\psi$-prime ideal of $R$.  Let $IK\subseteq N$ and $IK\nsubseteq\phi(N)$ for some ideal $I$ of $R$ and some
submodule $K$ of $M$. Since $M$ is multiplication, then there is an ideal 
$J$ of $R$ such that $K=JM$. Hence $IJ\subseteq (N:_{R}M)$ and $IJ\nsubseteq\psi((N:_{R}M))$ which
implies that either $I\subseteq (N:_{R}M)$ or $J\subseteq {(N:_{R}M)}$.
If $I\subseteq (N:_{R}M),$ then we are done. So, suppose
that $J\subseteq{(N:_{R}M)}$. Thus $K=JM\subseteq N$. 
Consequently $N$ is $\phi$-prime.
\end{proof}

\begin{theorem}\label{flat}
Let $R$ be a $um$-ring, $M$ be an $R$-module and $\phi :S(M)\rightarrow S(M)\cup
\{\emptyset \}$ be a function.
Suppose that $N$ is a proper submodule of $M$ such that $F\otimes \phi(N)=\phi(F\otimes N)$.
\end{theorem}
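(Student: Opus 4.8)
The statement as printed breaks off before its hypothesis on $F$ and its conclusion; the evident intended reading is that $F$ is a flat $R$-module and that, provided $F\otimes N$ is a proper submodule of $F\otimes M$ (equivalently $F\otimes N\neq F\otimes M$), the conclusion is that $F\otimes N$ is a $\phi$-classical prime submodule of $F\otimes M$. The plan is to prove exactly this. First I would record the identifications that flatness supplies: applying $F\otimes-$ to the inclusions $\phi(N)\subseteq N\subseteq M$ yields injections, so $F\otimes\phi(N)\subseteq F\otimes N\subseteq F\otimes M$ may be regarded as genuine submodules, and the hypothesis $F\otimes\phi(N)=\phi(F\otimes N)$ says precisely that $\phi(F\otimes N)$ coincides with the submodule $F\otimes\phi(N)$.

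The technical heart is a colon--tensor interchange lemma: for any submodule $A\subseteq M$ and any $c\in R$ one has, inside $F\otimes M$, the equality $F\otimes(A:_Mc)=\bigl(F\otimes A:_{F\otimes M}c\bigr)$. To prove it I would consider the $R$-map $\mu_c:M\to M/A$, $m\mapsto cm+A$, whose kernel is $(A:_Mc)$; tensoring the left-exact sequence $0\to(A:_Mc)\to M\xrightarrow{\mu_c}M/A$ with the flat module $F$ keeps it exact, while flatness also gives $F\otimes(M/A)\cong(F\otimes M)/(F\otimes A)$ from $0\to A\to M\to M/A\to 0$. Under this identification $1\otimes\mu_c$ becomes $\zeta\mapsto c\zeta+F\otimes A$, whose kernel is $(F\otimes A:_{F\otimes M}c)$; comparing kernels yields the asserted equality.

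With the lemma in hand the conclusion follows from the $um$-ring characterization already proved. Fix $a,b\in R$ and $\xi\in F\otimes M$ with $ab\xi\in(F\otimes N)\setminus\phi(F\otimes N)$, i.e. $\xi\in(F\otimes N:_{F\otimes M}ab)$ but $\xi\notin(\phi(F\otimes N):_{F\otimes M}ab)$. Since $R$ is a $um$-ring and $N$ is $\phi$-classical prime, Theorem \ref{main2}(2) gives one of $(N:_Mab)=(\phi(N):_Mab)$, $(N:_Mab)=(N:_Ma)$, or $(N:_Mab)=(N:_Mb)$. Applying $F\otimes-$ and the interchange lemma (and, in the first alternative, the compatibility $F\otimes\phi(N)=\phi(F\otimes N)$) transports each alternative into $F\otimes M$: respectively $(F\otimes N:_{F\otimes M}ab)=(\phi(F\otimes N):_{F\otimes M}ab)$, or $=(F\otimes N:_{F\otimes M}a)$, or $=(F\otimes N:_{F\otimes M}b)$. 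The first is impossible, since it would force $\xi\in(\phi(F\otimes N):_{F\otimes M}ab)$; hence $\xi$ lies in $(F\otimes N:_{F\otimes M}a)$ or $(F\otimes N:_{F\otimes M}b)$, that is $a\xi\in F\otimes N$ or $b\xi\in F\otimes N$, as required.

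The step I expect to be the main obstacle is the interchange lemma, and specifically verifying that $F\otimes-$ carries the colon-equalities of Theorem \ref{main2}(2) faithfully into $F\otimes M$: this is where flatness is invoked twice (exactness of $0\to(A:_Mc)\to M\to M/A$ and the identification $F\otimes(M/A)\cong(F\otimes M)/(F\otimes A)$), and where the compatibility hypothesis is exactly what is needed to match the $\phi$-term on both sides. The properness hypothesis $F\otimes N\neq F\otimes M$ is used only to guarantee that $F\otimes N$ is a legitimate proper submodule to which the definition of $\phi$-classical prime applies.
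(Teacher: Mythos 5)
Your reconstruction of the truncated statement matches the paper's item (1), and your argument is essentially the paper's: both rest on the $um$-ring characterization of Theorem \ref{main2} together with the colon--tensor interchange $F\otimes(N:_Mab)=(F\otimes N:_{F\otimes M}ab)$ for flat $F$ (which the paper simply cites as \cite[Lemma 3.2]{A} and you prove directly), and on the compatibility $F\otimes\phi(N)=\phi(F\otimes N)$ to handle the first alternative. The only cosmetic difference is that you verify the definition of $\phi$-classical prime pointwise from the transported colon equalities, whereas the paper re-invokes condition (2) of Theorem \ref{main2}; the paper's theorem also has a second, faithfully flat part that the truncated statement did not show you.
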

\begin{enumerate}
\item If $F$ is a flat $R$-module and $N$ is a $\phi$-classical prime 
submodule of $M$ such that $F\otimes N\neq F\otimes M$, then $%
F\otimes N$ is a $\phi$-classical prime submodule of $F\otimes M.$

\item Suppose that $F$ is a faithfully flat $R$-module. Then $N$ is a
$\phi$-classical prime submodule of $M$ if and only if $F\otimes N$ is a
$\phi$-classical prime submodule of $F\otimes M.$
\end{enumerate}
\begin{proof}
$\left( 1\right) $ Let $a,b\in R$. Then by Theorem \ref{main2}, either $\left(
N:_Mab\right) =\left(\phi(N):_Mab\right) $ or $\left( N:_Mab\right) =\left(
N:_Ma\right) $ or $\left( N:_Mab\right) =\left( N:_Mb\right) $. Assume that $
\left( N:_Mab\right) =\left(\phi(N):_Mab\right) $. Then by \cite[Lemma 3.2]{A}, 
$$\left( F\otimes N:_{F\otimes M}ab\right) =F\otimes \left( N:_Mab\right) =F\otimes \left( \phi(N):_Mab\right)
$$$$\hspace{4.3cm}=\left( F\otimes \phi(N):_{F\otimes M}ab\right)=\left( \phi(F\otimes N):_{F\otimes M}ab\right).$$
Now, suppose that $\left( N:_Mab\right) =\left(N:_Ma\right)$. Again by \cite[Lemma 3.2]{A}, 
$$\left( F\otimes N:_{F\otimes M}ab\right) =F\otimes \left( N:_Mab\right) =F\otimes \left( N:_Ma\right)
$$$$\hspace{0.7cm}=\left( F\otimes N:_{F\otimes M}a\right).$$
Similarly, we can show that if $\left( N:_Mab\right) =\left( N:_Mb\right) $,
then $\left( F\otimes N:_{F\otimes M}ab\right)=\left( F\otimes N:_{F\otimes M}b\right).$
Consequently by Theorem \ref{main2} we deduce that $F\otimes N$ is a
$\phi$-classical prime submodule of $F\otimes M.$

$\left( 2\right) $ Let $N$ be a $\phi$-classical prime submodule of $M$
and assume that $F\otimes N=F\otimes M$. Then $0\rightarrow F\otimes N%
\overset{\subseteq }{\rightarrow }F\otimes M\rightarrow 0$ is an exact
sequence. Since $F$ is a faithfully flat module, $0\rightarrow N\overset{%
\subseteq }{\rightarrow }M\rightarrow 0$ is an exact sequence. So $N=M$,
which is a contradiction. So $F\otimes N\neq F\otimes M$. Then $F\otimes N$
is a $\phi$-classical prime submodule by $\left( 1\right) $. Now for the
converse, let $F\otimes N$ be a $\phi$-classical prime submodule of $%
F\otimes M$. We have $F\otimes N\neq F\otimes M$ and so $N\neq M$. Let $%
a,b\in R$. Then $\left( F\otimes N:_{F\otimes M}ab\right) =\left(\phi(F\otimes N):_{F\otimes M}ab\right) $  
or $\left( F\otimes N:_{F\otimes M}ab\right) =\left( F\otimes
N:_{F\otimes M}a\right) $ or $\left( F\otimes N:_{F\otimes M}ab\right) =\left( F\otimes N:_{F\otimes M}b\right) $
by Theorem \ref{main2}. Suppose $\left( F\otimes N:_{F\otimes M}ab\right) \\=\left(\phi(F\otimes N):_{F\otimes M}ab\right)$. Hence 
$$F\otimes \left( N:_Mab\right)=\left( F\otimes N:_{F\otimes M}ab\right)=\left(
\phi(F\otimes  N):_{F\otimes M}ab\right)
$$$$\hspace{2.5cm}=\left( F\otimes \phi(N):_{F\otimes M}ab\right)=F\otimes \left( \phi(N):_Mab\right).$$
Thus  $0\rightarrow F\otimes \left(\phi(N):_Mab\right) \overset{\subseteq }{%
\rightarrow }F\otimes \left( N:_Mab\right) \rightarrow 0$ is an exact
sequence. Since $F$ is a faithfully flat module, $0\rightarrow \left(
\phi(N):_Mab\right) \overset{\subseteq }{\rightarrow }\left( N:_Mab\right)
\rightarrow 0$ is an exact sequence which implies that $\left( N:_Mab\right)
=\left(\phi(N):_Mab\right) $. With a similar argument we can deduce that if 
$\left( F\otimes N:_{F\otimes M}ab\right) =\left( F\otimes N:_{F\otimes M}a\right) $ or 
$\left( F\otimes N:_{F\otimes M}ab\right) =\left( F\otimes N:_{F\otimes M}b\right)$, then 
$\left( N:_Mab\right)=\left( N:_Ma\right)$ or $\left( N:_Mab\right)=\left( N:_Mb\right) $.
Consequently $N$ is a $\phi$-classical prime
submodule of $M$, by Theorem \ref{main2}.
\end{proof}

\begin{corollary}
Let $R$ be a $um$-ring, $M$ be an $R$-module and $X$ be an indeterminate.
If $N$ is a $\phi$-classical prime submodule of $M$ with $R[X]\otimes \phi(N)=\phi(R[X]\otimes N)$,
then $N[X]$ is a $\phi$-classical prime submodule of $M[X]$.
\end{corollary}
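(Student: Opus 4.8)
The plan is to obtain this as an instance of Theorem~\ref{flat} with the flat module $F=R[X]$. The starting point is the canonical isomorphism of $R$-modules (indeed of $R[X]$-modules)
\[
R[X]\otimes_R M\cong M[X],\qquad \Big(\sum_i r_iX^i\Big)\otimes m\longmapsto \sum_i (r_im)X^i,
\]
under which the submodule $R[X]\otimes_R N$ is carried precisely onto $N[X]$, and more generally $R[X]\otimes_R K$ onto $K[X]$ for every submodule $K$ of $M$. Thus the whole statement is the translation, through this identification, of the assertion that $R[X]\otimes_R N$ is a $\phi$-classical prime submodule of $R[X]\otimes_R M$.

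Next I would check that $F=R[X]$ meets the hypotheses of Theorem~\ref{flat}. As an $R$-module, $R[X]$ is free on the basis $\{1,X,X^2,\dots\}$, hence it is faithfully flat; in particular it is flat. The standing assumption of the corollary, $R[X]\otimes\phi(N)=\phi(R[X]\otimes N)$, is exactly the hypothesis $F\otimes\phi(N)=\phi(F\otimes N)$ required in Theorem~\ref{flat}. Finally, since $N$ is a proper submodule we have $N\neq M$, and faithful flatness of $R[X]$ forces $R[X]\otimes_R N\neq R[X]\otimes_R M$ (an equality here would, by exactness, give $N=M$); equivalently $N[X]\neq M[X]$, so $N[X]$ is proper.

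With these verifications in hand, Theorem~\ref{flat}(1) applies directly: $R[X]\otimes_R N$ is a $\phi$-classical prime submodule of $R[X]\otimes_R M$. Transporting this conclusion back along the isomorphism above yields that $N[X]$ is a $\phi$-classical prime submodule of $M[X]$, as desired. The only genuinely nontrivial point is the bookkeeping in the first paragraph---namely that the identification $R[X]\otimes_R M\cong M[X]$ is compatible with the submodule structure and with $\phi$ on both sides---but this is routine, and once it is set up the result is immediate from Theorem~\ref{flat}.
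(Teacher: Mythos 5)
Your proposal is correct and follows essentially the same route as the paper: both apply Theorem~\ref{flat} with the flat module $F=R[X]$ and transport the conclusion through the identification $R[X]\otimes_R N\simeq N[X]$, $R[X]\otimes_R M\simeq M[X]$. Your version merely adds the (worthwhile) explicit check that $R[X]\otimes_R N\neq R[X]\otimes_R M$ via faithful flatness, which the paper leaves implicit.
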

\begin{proof}
Assume that $N$ is a $\phi$-classical prime submodule of $M$ with $R[X]\otimes \phi(N)=\phi(R[X]\otimes N)$. Notice that $R[X]$ is a flat $R$-module. Then by Theorem \ref{flat}, $R[X]\otimes N\simeq N[X]$ is a $\phi$-classical prime submodule of $R[X]\otimes M\simeq M[X]$.
\end{proof}

\begin{definition}
Let $N$ be a proper submodule of $M$ and $a,b\in R$, $m\in M.$
If $N$ is a $\phi$-classical prime submodule and $abm\in\phi(N),$ $am\notin N$, $bm\notin N$, then $(a,b,m)$ is called a 
{\it $\phi$-classical triple-zero of} $N$.
\end{definition}

\begin{theorem}\label{le1}Let $N$ be a $\phi$-classical prime submodule of an $R$-module $M$ and
suppose that $abK\subseteq N$ for some $a,b\in R$ and some submodule $K$ of 
$M$. If $(a,b,k)$ is not a $\phi$-classical triple-zero of $N$ for every $k\in K$, then $aK\subseteq N$ or
$bK\subseteq N$.
\end{theorem}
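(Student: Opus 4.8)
The plan is to reduce the statement to the elementary fact, already invoked in the proof of Theorem \ref{main}, that an abelian group which is the union of two of its subgroups must coincide with one of them. The hypotheses about triple-zeros are designed precisely to guarantee a pointwise dichotomy ($ak\in N$ or $bk\in N$) for each $k\in K$, and once that dichotomy is in hand the conclusion is a routine covering argument.

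First I would extract that pointwise consequence. Fix $k\in K$; since $abK\subseteq N$ we have $abk\in N$. If $abk\notin\phi(N)$, then $abk\in N\setminus\phi(N)$, so the $\phi$-classical primeness of $N$ forces $ak\in N$ or $bk\in N$. If instead $abk\in\phi(N)$, then the assumption that $(a,b,k)$ is \emph{not} a $\phi$-classical triple-zero of $N$ rules out the simultaneous failure ``$ak\notin N$ and $bk\notin N$,'' so again $ak\in N$ or $bk\in N$. Thus in either case, for every $k\in K$ we have $ak\in N$ or $bk\in N$.

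Next I would repackage this covering in terms of submodules. Set
\[
K_a:=K\cap(N:_M a),\qquad K_b:=K\cap(N:_M b).
\]
As recorded just before Theorem \ref{main}, $(N:_M a)$ and $(N:_M b)$ are submodules of $M$, so $K_a$ and $K_b$ are submodules of $K$. The previous paragraph says exactly that $K=K_a\cup K_b$. Applying the fact that a group equal to the union of two subgroups must equal one of them, we conclude $K=K_a$ or $K=K_b$; that is, $aK\subseteq N$ or $bK\subseteq N$, as desired.

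I expect no serious obstacle here. The only point requiring care is the case split in the first step, where one must invoke $\phi$-classical primeness exactly when $abk\notin\phi(N)$ and the triple-zero hypothesis exactly when $abk\in\phi(N)$; everything afterward is the standard union-of-submodules argument already used in this paper.
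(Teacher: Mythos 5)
Your proof is correct and follows essentially the same route as the paper's: the pointwise dichotomy ($ak\in N$ or $bk\in N$ for each $k\in K$), obtained from exactly the same two-case split on whether $abk\in\phi(N)$, followed by the union-of-two-subgroups principle. The paper merely inlines that principle as a contradiction argument with witnesses $k_1,k_2$ and the element $k_1+k_2$, whereas you cite it as a lemma applied to the submodules $K\cap(N:_Ma)$ and $K\cap(N:_Mb)$; the content is identical.
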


\begin{proof}
Suppose that $(a,b,k)$ is not a $\phi$-classical triple-zero of $N$ for every $k\in K$. 
Assume on the contrary that $aK\not\subseteq N$ and $bK\not\subseteq N$. Then there are 
$k_{1},k_{2}\in K$ such that $ak_{1}\not\in N$ and $bk_{2}\not\in N$. 
If $abk_{1}\notin\phi(N)$, then we have $bk_{1}\in N$, because 
$ak_{1}\not\in N$ and $N$ is a $\phi$-classical prime submodule
of $M$. If $abk_{1}\in\phi(N)$, then since $ak_{1}\notin N$ and $(a,b,k_{1})$ 
is not a $\phi$-classical triple-zero of $N$, we conclude again that $bk_{1}\in N$. By a
similar argument, since $(a,b,k_2)$ is not a $\phi$-classical triple-zero and $bk_2\notin N$, then we deduce that
$ak_{2}\in N$. By our hypothesis, $ab(k_{1}+k_{2})\in N$
and $(a,b,k_{1}+k_{2})$ is not a $\phi$-classical triple-zero of $N$.
Hence we have either $a(k_{1}+k_{2})\in N$ or $b(k_{1}+k_{2})\in N$. 
If $a(k_{1}+k_{2})=ak_{1}+ak_{2}\in N$, then since $%
ak_{2}\in N$, we have $ak_{1}\in N$, a contradiction. If 
$b(k_{1}+k_{2})=bk_{1}+bk_{2}\in N$, then since $bk_{1}\in N$,
we have $bk_{2}\in N$, which again is a contradiction. Thus 
$aK\subseteq N$ or $bK\subseteq N$.
\end{proof}

\begin{definition}
Let $N$ be a $\phi$-classical prime submodule of an $R$-module $M$ and suppose that 
$IJK\subseteq N$ for some ideals $I,$ $J$ of $R$ and some submodule $K$ of 
$M$. We say that $N$ is a \textit{free }\textit{$\phi$-classical triple-zero with respect to 
}$IJK$ if $(a,b,k)$ is not a $\phi$-classical triple-zero of $N$ for every $a\in
I,b\in J$ and $k\in K$.
\end{definition}

\begin{remark}
Let $N$ be a $\phi$-classical prime submodule of $M$ and suppose that $%
IJK\subseteq N$ for some ideals $I,J$ of $R$ and some submodule $K$ of $M$
such that $N$ is a free $\phi$-classical triple-zero with respect to $IJK$. Hence, if $%
a\in I,b\in J$ and $k\in K$, then $ak\in N$ or $bk\in N$.
\end{remark}

\begin{corollary}
Let $N$ be a $\phi$-classical prime submodule of an $R$-module $M$ and
suppose that $IJK\subseteq N$ for some ideals $%
I,J$ of $R$ and some submodule $K$ of $M$. If $N$ is a free $\phi$-classical 
triple-zero with respect to $IJK$, then $IK\subseteq N$ or $JK\subseteq N$.
\end{corollary}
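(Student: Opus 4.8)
The plan is to argue by contradiction and reduce immediately to Theorem~\ref{le1}, which already contains all the real work. So I would begin by assuming that the conclusion fails, i.e.\ that $IK\not\subseteq N$ and $JK\not\subseteq N$ simultaneously. The one elementary observation I need is that, for a submodule, $IK\subseteq N$ holds precisely when $ak\in N$ for every $a\in I$ and every $k\in K$ (the ``if'' direction because $N$ is closed under addition, the ``only if'' because each generator $ak$ lies in $IK$). Consequently $IK\not\subseteq N$ yields a single element $a_0\in I$ together with some $k_1\in K$ such that $a_0k_1\notin N$, and in particular $a_0K\not\subseteq N$. Symmetrically, $JK\not\subseteq N$ produces $b_0\in J$ with $b_0K\not\subseteq N$.

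Next I would feed the pair $(a_0,b_0)$ into Theorem~\ref{le1}. Since $a_0\in I$ and $b_0\in J$, we have $a_0b_0\in IJ$ and hence $a_0b_0K\subseteq IJK\subseteq N$, so the standing hypothesis $a_0b_0K\subseteq N$ of Theorem~\ref{le1} is satisfied. Moreover, because $N$ is a free $\phi$-classical triple-zero with respect to $IJK$, by definition $(a,b,k)$ is not a $\phi$-classical triple-zero of $N$ for every $a\in I$, $b\in J$, $k\in K$; specializing to $a=a_0$ and $b=b_0$ gives exactly that $(a_0,b_0,k)$ is not a $\phi$-classical triple-zero for every $k\in K$, which is precisely the second hypothesis of Theorem~\ref{le1}.

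Applying Theorem~\ref{le1} then forces $a_0K\subseteq N$ or $b_0K\subseteq N$, in direct contradiction with the way $a_0$ and $b_0$ were chosen. Therefore the assumption was untenable, and we conclude that $IK\subseteq N$ or $JK\subseteq N$.

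I do not expect any serious obstacle in this argument: the genuinely delicate part---the additive manipulation with $k_1+k_2$ used to handle the triple-zero cases---has already been absorbed into the proof of Theorem~\ref{le1}, and the ideal structure of $I$ and $J$ plays no essential role beyond guaranteeing $a_0b_0\in IJ$. The only point deserving a moment's care is the passage from the submodule non-containments $IK\not\subseteq N$ and $JK\not\subseteq N$ to the single-element witnesses $a_0$ and $b_0$, and this is immediate from the generator-wise characterization of $IK\subseteq N$ noted above. The resulting write-up should be quite short.
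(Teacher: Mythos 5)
Your proof is correct and follows exactly the paper's argument: assume both containments fail, extract $a_0\in I$ and $b_0\in J$ with $a_0K\not\subseteq N$ and $b_0K\not\subseteq N$, and apply Theorem~\ref{le1} to the pair $(a_0,b_0)$ to reach a contradiction. The extra justification you give for passing from $IK\not\subseteq N$ to a single-element witness is a harmless (and welcome) elaboration of a step the paper leaves implicit.
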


\begin{proof}
Suppose that $N$ is a free $\phi$-classical triple-zero with respect to $IJK$. 
Assume that $IK\not\subseteq N$ and 
$JK\not\subseteq N$. Then there are $a\in I$ and $b\in J$
with $aK\not\subseteq N$ and $bK\not\subseteq N$.
Since $abK\subseteq N$ and $N$ is free $\phi$-classical 
triple-zero with respect to $IJK$, then Theorem \ref{le1} implies that $aK\subseteq N$ and $bK\subseteq N$
which is a contradiction. Consequently $IK\subseteq N$ or $JK\subseteq N$.
\end{proof}

\begin{theorem}
Let $M$ be an $R$-module and $a$ be an element of $R$ such
that $aM\neq M$. Suppose that $(0:_{M}a)\subseteq aM$. Then $aM$ is an almost 
classical prime submodule of $M$ if and only if it is a classical prime submodule of $M$.
\end{theorem}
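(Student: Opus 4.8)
The forward implication needs no work: a classical prime submodule is $\phi$-classical prime for \emph{every} $\phi$ (as observed in the Introduction), and ``almost classical prime'' is the case $\phi(N)=(N:_RM)N$. So the whole point is the converse, and the plan is to reduce it to the defining property of almost classical primeness by a single well-chosen perturbation. Throughout write $N=aM$ and $P=(N:_RM)$; note that $aM\neq M$ only serves to guarantee that $N$ is proper, so that the two notions apply to it.

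The first step is to compute $\phi(N)=PN$ explicitly. Since $aM\subseteq aM$ we have $a\in P$, hence $a^2M=a(aM)=aN\subseteq PN$. Conversely, a typical generator $p(am_0)=a(pm_0)$ of $PN$ (with $p\in P$, $m_0\in M$) lies in $a(PM)\subseteq a(aM)=a^2M$ because $PM\subseteq N=aM$. Therefore $\phi(N)=a^2M$, and it is precisely this clean description that lets the annihilator hypothesis enter.

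Next, assuming $N=aM$ is almost classical prime, I take arbitrary $r,s\in R$ and $m\in M$ with $rsm\in N$ and aim to show $rm\in N$ or $sm\in N$. If $rsm\notin\phi(N)$ this is the definition, so I may assume $rsm\in\phi(N)=a^2M$ and argue by contradiction, supposing $rm\notin N$ and $sm\notin N$. The device is to perturb $r$ by the generator $a$ and feed $(r+a)sm$ into the definition. It lies in $N$ because $(r+a)sm=rsm+asm$ with $rsm\in N$ and $asm=a(sm)\in aM=N$ (as $a\in P$).

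The only real obstacle is to certify that this perturbed product escapes $\phi(N)$, i.e. that $(r+a)sm\notin a^2M$; once this is in hand, applying almost classical primeness to $(r+a)\cdot s\cdot m\in N\backslash\phi(N)$ gives $(r+a)m\in N$ or $sm\in N$, and since $sm\notin N$ we obtain $(r+a)m\in N$, whence $rm=(r+a)m-am\in N$ (using $am\in aM=N$), the desired contradiction. To see $(r+a)sm\notin a^2M$: if it were in $a^2M$, then subtracting $rsm\in a^2M$ would force $asm\in a^2M$, say $a(sm)=a(am'')$ for some $m''\in M$, so $a(sm-am'')=0$ and $sm-am''\in(0:_Ma)\subseteq aM$; this yields $sm\in aM=N$, contradicting $sm\notin N$. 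This is exactly the step that consumes the hypothesis $(0:_Ma)\subseteq aM$, and it completes the proof that $N$ is classical prime.
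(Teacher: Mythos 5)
Your proof is correct and follows essentially the same route as the paper's: both perturb one factor by $a$, feed $(r+a)sm$ into the almost-classical-prime hypothesis, and use $(0:_Ma)\subseteq aM$ to cancel the $a$ and conclude $sm\in aM$ in the residual case (your identification $\phi(aM)=(aM:_RM)aM=a^2M$ is a valid repackaging of the paper's use of $(aM:_RM)M\subseteq aM$). The only blemish is that you label the trivial direction as the ``forward implication'' when it is the converse of the stated equivalence; the mathematics is unaffected.
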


\begin{proof}
Assume that $aM$ is an almost classical prime submodule of $M$.
Let $x,y\in R$ and $m\in M$ such that $xym\in aM$. We show that $xm\in aM$ 
or $ym\in aM$. If $xym\notin (aM:_RM)aM$, then there is nothing
to prove, since $aM$ is almost classical prime. So, suppose that $xym\in (aM:_RM)aM$. 
Note that $(x+a)ym\in aM$. If $(x+a)ym\notin (aM:_RM)aM$, then $(x+a)m\in aM$ or 
$ym\in aM$. Hence $xm\in aM$
or $ym\in aM$. Therefore assume that $(x+a)ym\in(aM:_RM)aM$. Hence $xym\in (aM:_RM)aM$ 
gives $aym\in (aM:_RM)aM$. Then, there exists $m^{\prime }\in (aM:_RM)M$ such that $aym=am^{\prime }$ and so $%
ym-m^{\prime }\in (0:_{M}a)\subseteq aM$ which shows that $ym\in aM$, because $m^{\prime}\in aM$. Consequently $aM$ is classical prime. The converse is easy to check.
\end{proof}

\begin{theorem}
Let $M$ be an $R$-module and $m_0$ be an element of $M$ such
that $Rm_0\neq M$. Suppose that $(0:_{R}m_0)\subseteq(Rm_0:_RM)$. If $Rm_0$ is an almost 
classical prime submodule of $M$, then it is a 2-absorbing submodule of $M$.
\end{theorem}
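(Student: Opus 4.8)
The plan is to write $N=Rm_0$ and mimic the shift argument of the preceding theorem, now exploiting that $N$ is cyclic. Let $a,b\in R$ and $m\in M$ with $abm\in N$; the goal is to force one of $am\in N$, $bm\in N$, or $ab\in(N:_RM)$, which is exactly the $2$-absorbing condition. First I would record that, since $N=Rm_0$ is cyclic, its almost-prime obstruction simplifies to $\phi(N)=(N:_RM)N=(N:_RM)m_0$. If $abm\notin(N:_RM)N$, then the almost classical prime hypothesis directly yields $am\in N$ or $bm\in N$ and we are finished, so the entire argument reduces to the case $abm\in(N:_RM)m_0$.

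The key step is to translate by the generator $m_0$: consider $ab(m+m_0)$. Because $abm\in N$ and $abm_0\in Rm_0=N$, we have $ab(m+m_0)\in N$. Now I would split on whether this translated element lands in $\phi(N)$. If $ab(m+m_0)\notin\phi(N)$, then since $N$ is almost classical prime we get $a(m+m_0)\in N$ or $b(m+m_0)\in N$; as $am_0,bm_0\in N$ we may cancel the $m_0$-terms and conclude $am\in N$ or $bm\in N$, landing in one of the first two alternatives.

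The remaining case $ab(m+m_0)\in\phi(N)$ is where the annihilator hypothesis enters. Subtracting $abm\in\phi(N)$ gives $abm_0\in\phi(N)=(N:_RM)m_0$, say $abm_0=dm_0$ with $d\in(N:_RM)$. Then $(ab-d)m_0=0$, i.e. $ab-d\in(0:_Rm_0)\subseteq(N:_RM)$ by hypothesis, so $ab\in(N:_RM)$, which is precisely the third alternative in the definition of a $2$-absorbing submodule. This settles all cases.

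I expect the only genuine subtlety to be the choice of shift: one must translate by the generator $m_0$ rather than by an arbitrary element, so that $abm_0$, $am_0$, $bm_0$ automatically lie in $N$, and so that in the final case the hypothesis $(0:_Rm_0)\subseteq(N:_RM)$ can be invoked on $ab-d$. The cyclicity of $N$ is therefore used twice, and it is what makes the clean three-way dichotomy of the $2$-absorbing condition fall out; the hypothesis $Rm_0\neq M$ is needed only to guarantee that $N$ is proper.
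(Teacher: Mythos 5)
Your proposal is correct and follows essentially the same route as the paper's own proof: the same reduction to the case $abm\in(Rm_0:_RM)m_0$, the same translation by the generator $m_0$, and the same final use of $(0:_Rm_0)\subseteq(Rm_0:_RM)$ to place $ab$ in $(Rm_0:_RM)$. No substantive differences to report.
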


\begin{proof}
Assume that $Rm_0$ is an almost classical prime submodule of $M$.
Let $x,y\in R$ and $m\in M$ such that $xym\in Rm_0$. If $xym\notin (Rm_0:_RM)m_0$, then there is nothing
to prove, since $Rm_0$ is almost classical prime. So, suppose that $xym\in (Rm_0:_RM)m_0$. 
Notice that $xy(m+m_0)\in Rm_0$. If $xy(m+m_0)\notin (Rm_0:_RM)m_0$, then $x(m+m_0)\in Rm_0$ or 
$y(m+m_0)\in Rm_0$. Hence $xm\in Rm_0$ or $ym\in Rm_0$. Therefore assume that $xy(m+m_0)\in(Rm_0:_RM)m_0$. 
Hence $xym\in (Rm_0:_RM)m_0$ implies that $xym_0\in (Rm_0:_RM)m_0$. Then, there exists $r\in (Rm_0:_RM)$ such that $xym_0=rm_0$ and so $xy-r\in (0:_{R}m_0)\subseteq(Rm_0:_RM)$ which shows that $xy\in(Rm_0:_RM)$. Consequently $Rm_0$ is a 2-absorbing
submodule of $M$.
\end{proof}

\begin{proposition}
Let $N$ be a submodule of $M$ and $\phi (N)$ be a classical prime
submodule of $M$. If $N$ is a $\phi $-classical prime submodule of $M$,
then $N$ is a classical prime submodule of $M$.
\end{proposition}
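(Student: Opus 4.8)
The plan is to verify the defining condition of a classical prime submodule directly, by a two-case split according to whether the product lands inside $\phi(N)$ or not. So I would fix arbitrary $a,b\in R$ and $m\in M$ with $abm\in N$, and aim to show $am\in N$ or $bm\in N$. Recall that, by the standing convention of the paper, we may assume $\phi(N)\subseteq N$.

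First I would dispose of the "generic" case: if $abm\notin\phi(N)$, then $abm\in N\backslash\phi(N)$, and the hypothesis that $N$ is $\phi$-classical prime immediately gives $am\in N$ or $bm\in N$, as desired. This case uses nothing beyond the definition of $\phi$-classical prime.

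The remaining case is $abm\in\phi(N)$, and here is where the extra hypothesis on $\phi(N)$ enters. Since $\phi(N)$ is assumed to be a classical prime submodule of $M$, from $abm\in\phi(N)$ we obtain $am\in\phi(N)$ or $bm\in\phi(N)$. Using $\phi(N)\subseteq N$, either alternative yields $am\in N$ or $bm\in N$. Combining the two cases shows that $abm\in N$ always forces $am\in N$ or $bm\in N$, so $N$ is classical prime.

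I do not expect any genuine obstacle here: the statement is essentially a clean case analysis, and the only point worth flagging is the (harmless) use of the convention $\phi(N)\subseteq N$ so that membership in $\phi(N)$ can be upgraded to membership in $N$ in the second case.
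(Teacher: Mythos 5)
Your proof is correct and follows exactly the same two-case argument as the paper: apply the $\phi$-classical prime hypothesis when $abm\notin\phi(N)$, and apply classical primeness of $\phi(N)$ together with $\phi(N)\subseteq N$ when $abm\in\phi(N)$. No issues.
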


\begin{proof}
Let $N$ be a $\phi $-classical prime submodule of $M$. Assume that $%
abm\in N$ for some elements $a,b\in R$ and $m\in M$. If $abm\in \phi (N)$,
then since $\phi (N)$ is classical prime, we conclude that $am\in\phi (N)\subseteq N$ 
or $bm\in\phi (N)\subseteq N$, and so we are done. When $abm\notin \phi (N)$ clearly
the result follows.
\end{proof}

Let $\mathcal{S}$ be a multiplicatively closed subset of $R$. It is well-known that
each submodule of $\mathcal{S}^{-1}M$ is in the form of $\mathcal{S}^{-1}N$ for some submodule $N$
of $M$. Let $\phi :S(M)\rightarrow S(M)\cup \{\emptyset \}$ be a function and define
$\phi _{\mathcal{S}}:S(\mathcal{S}^{-1}M)\rightarrow S(\mathcal{S}^{-1}M)\cup
\{\emptyset\}$ by $\phi_{\mathcal{S}}(\mathcal{S}^{-1}N)=\mathcal{S}^{-1}\phi(N)$ 
(and $\phi_{\mathcal{S}}(\mathcal{S}^{-1}N)=\emptyset$ when $\phi(N)=\emptyset$)
for every submodule $N$ of $M$. 

For an $R$-module $M$, the set of zero-divisors of $M$ is denoted by $Z_R(M)$.
\begin{theorem}
Let $M$ be an $R$-module, $N$ be a submodule and $\mathcal{S}$ be a multiplicative
subset of $R$. 
\begin{enumerate}
\item If $N$ is a $\phi$-classical prime submodule of $M$ such that $\left( N:_RM\right)\cap \mathcal{S}=\emptyset$,
then $\mathcal{S}^{-1}N$ is a $\phi_{\mathcal{S}}$-classical prime submodule of $\mathcal{S}^{-1}M$.
\item If $\mathcal{S}^{-1}N$ is a $\phi_\mathcal{S}$-classical prime submodule of $\mathcal{S}^{-1}M$ such that  $\mathcal{S}\cap Z_R(N/\phi(N))\\=\emptyset$ and $\mathcal{S}\cap Z_R(M/N)=\emptyset$, then $N$ is a $\phi$-classical prime submodule of $M$.
\end{enumerate}
\end{theorem}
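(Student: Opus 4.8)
The plan is to verify each direction straight from the definition of $\phi$-classical prime, transporting elements between $M$ and $\mathcal{S}^{-1}M$ by clearing or introducing denominators and constantly using the identification $\phi_{\mathcal{S}}(\mathcal{S}^{-1}N)=\mathcal{S}^{-1}\phi(N)$.

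For part (1), I would first note that the hypothesis $(N:_RM)\cap\mathcal{S}=\emptyset$ guarantees $\mathcal{S}^{-1}N$ is a proper submodule of $\mathcal{S}^{-1}M$ (were it all of $\mathcal{S}^{-1}M$, some $s\in\mathcal{S}$ would satisfy $sM\subseteq N$, i.e. $s\in(N:_RM)\cap\mathcal{S}$). Then I would take $\frac{a}{s},\frac{b}{t}\in\mathcal{S}^{-1}R$ and $\frac{m}{u}\in\mathcal{S}^{-1}M$ with $\frac{abm}{stu}\in\mathcal{S}^{-1}N\backslash\mathcal{S}^{-1}\phi(N)$. Clearing denominators in $\frac{abm}{stu}\in\mathcal{S}^{-1}N$ yields some $s'\in\mathcal{S}$ with $s'abm\in N$. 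The crucial observation is that $s'abm\notin\phi(N)$: otherwise $\frac{abm}{stu}=\frac{s'abm}{s'stu}$ would lie in $\mathcal{S}^{-1}\phi(N)$, contrary to assumption. Hence $(s'a)\,b\,m\in N\backslash\phi(N)$, and since $N$ is $\phi$-classical prime, either $(s'a)m\in N$ or $bm\in N$. Localizing back gives $\frac{a}{s}\frac{m}{u}\in\mathcal{S}^{-1}N$ or $\frac{b}{t}\frac{m}{u}\in\mathcal{S}^{-1}N$, which is exactly what is needed.

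For part (2), properness of $N$ is immediate from properness of $\mathcal{S}^{-1}N$. I would take $a,b\in R$, $m\in M$ with $abm\in N\backslash\phi(N)$ and look at $\frac{a}{1}\frac{b}{1}\frac{m}{1}=\frac{abm}{1}\in\mathcal{S}^{-1}N$. The two zero-divisor hypotheses enter precisely at the two places where the naive correspondence could break down. First, to secure $\frac{abm}{1}\notin\mathcal{S}^{-1}\phi(N)$: if it did lie there, some $v\in\mathcal{S}$ would give $v\,abm\in\phi(N)$, so $v$ would annihilate the nonzero class $abm+\phi(N)\in N/\phi(N)$, forcing $v\in\mathcal{S}\cap Z_R(N/\phi(N))=\emptyset$, a contradiction. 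Thus $\frac{abm}{1}\in\mathcal{S}^{-1}N\backslash\phi_{\mathcal{S}}(\mathcal{S}^{-1}N)$, and $\phi_{\mathcal{S}}$-classical primeness of $\mathcal{S}^{-1}N$ gives $\frac{am}{1}\in\mathcal{S}^{-1}N$ or $\frac{bm}{1}\in\mathcal{S}^{-1}N$. Second, to descend such a membership: from $\frac{am}{1}\in\mathcal{S}^{-1}N$ one obtains $w\,am\in N$ for some $w\in\mathcal{S}$, so $w$ annihilates $am+N\in M/N$; as $\mathcal{S}\cap Z_R(M/N)=\emptyset$, the class $am+N$ is zero, i.e. $am\in N$ (and symmetrically $bm\in N$). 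Hence $N$ is $\phi$-classical prime.

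The denominator bookkeeping is routine; the genuine difficulty in both directions is the interaction with $\phi$. One must ensure that an element avoiding $\phi(N)$ (resp. $\mathcal{S}^{-1}\phi(N)$) before clearing or introducing a denominator continues to avoid it afterward, for otherwise the standing hypothesis ``$\in N\backslash\phi(N)$'' is lost. In part (1) this is handled cleanly by the identity $\phi_{\mathcal{S}}(\mathcal{S}^{-1}N)=\mathcal{S}^{-1}\phi(N)$, while in part (2) it is exactly the condition $\mathcal{S}\cap Z_R(N/\phi(N))=\emptyset$ that prevents an $\mathcal{S}$-multiple from slipping into $\phi(N)$; the complementary condition $\mathcal{S}\cap Z_R(M/N)=\emptyset$ is what allows membership in $N$ to be recovered after descent, and it is the absence of such zero-divisor control that would otherwise block the converse.
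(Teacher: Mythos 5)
Your proof is correct and follows essentially the same route as the paper's: clear one denominator to get $s'abm\in N$, use the identification $\phi_{\mathcal{S}}(\mathcal{S}^{-1}N)=\mathcal{S}^{-1}\phi(N)$ to see the cleared element stays out of $\phi(N)$, and in part (2) invoke $\mathcal{S}\cap Z_R(N/\phi(N))=\emptyset$ and $\mathcal{S}\cap Z_R(M/N)=\emptyset$ at exactly the two places you identify. (One small quibble: your parenthetical argument that $\mathcal{S}^{-1}N$ is proper infers a single $s\in\mathcal{S}$ with $sM\subseteq N$ from $\mathcal{S}^{-1}N=\mathcal{S}^{-1}M$, which requires $M$ to be finitely generated; the paper silently skips properness altogether, so this does not affect the comparison.)
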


\begin{proof}
(1) Let $N$ be a $\phi$-classical prime submodule of $M$ and 
$\left( N:_RM\right) \cap \mathcal{S}=\emptyset$. Suppose that 
$\frac{a_{1}}{s_{1}}\frac{a_{2}}{s_{2}}\frac{m}{s_{3}}\in \mathcal{S}^{-1}N\backslash\phi_{\mathcal{S}}(\mathcal{S}^{-1}N)$
for some $a_1,a_2\in R$, $s_1,s_2,s_3\in \mathcal{S}$ and $m\in M$.
Then there exists $s\in \mathcal{S}$ such that $sa_1a_2m\in N$. 
If $sa_1a_2m\in\phi(N)$, then $\frac{a_{1}}{s_{1}}\frac{a_{2}}{s_{2}}\frac{m}
{s_{3}}=\frac{sa_1a_2m}{ss_1s_2s_3}\in\phi_{\mathcal{S}}(\mathcal{S}^{-1}N)=\mathcal{S}^{-1}\phi(N)$, a contradiction.
Since $N$ is a $\phi$-classical prime submodule, then we have $a_{1}\left(
sm\right)\in N$ or $a_{2}\left( sm\right) \in N$. Thus 
$\frac{a_{1}}{s_{1}}\frac{m}{s_{3}}=\frac{sa_{1}m}{ss_{1}s_3}\in \mathcal{S}^{-1}N$ or
$\frac{a_{2}}{s_{2}}\frac{m}{s_{3}}=\frac{sa_{2}m}{ss_{2}s_3}\in \mathcal{S}^{-1}N$.
Consequently $\mathcal{S}^{-1}N$ is a $\phi_{\mathcal{S}}$-classical prime submodule of $\mathcal{S}^{-1}M$.

(2) Suppose that $\mathcal{S}^{-1}N$ is a $\phi_{\mathcal{S}}$-classical prime submodule of $\mathcal{S}^{-1}M$, \ $\mathcal{S}\cap Z_R(N/\phi(N))=\emptyset$ and $\mathcal{S}\cap Z_R(M/N)=\emptyset$. Let $a,b\in R$ and $m\in M$ such that $abm\in N\backslash\phi(N)$. Then $\frac{a}{1}\frac{b}{1}\frac{m}{1}\in \mathcal{S}^{-1}N$. If 
$\frac{a}{1}\frac{b}{1}\frac{m}{1}\in\phi_{\mathcal{S}}(\mathcal{S}^{-1}N)=\mathcal{S}^{-1}\phi(N)$, 
then there exists $s\in S$ such that $sabm\in\phi(N)$
which contradicts $\mathcal{S}\cap Z_R(N/\phi(N))=\emptyset$. 
Therefore $\frac{a}{1}\frac{b}{1}\frac{m}{1}\in\mathcal{S}^{-1}N\backslash\phi_{\mathcal{S}}(\mathcal{S}^{-1}N)$, and so either $\frac{a}{1}\frac{m}{1}\in \mathcal{S}^{-1}N$ or $\frac{b}{1}\frac{m}{1}\in \mathcal{S}^{-1}N$. We may assume that $\frac{a}{1}\frac{m}{1}\in \mathcal{S}^{-1}N$. So there exists $u\in \mathcal{S}$ such that $uam\in N$. But $\mathcal{S}\cap Z_R(M/N)=\emptyset$, whence $am\in N$. Consequently $N$ is a $\phi$-classical prime submodule of $M$.
\end{proof}

\begin{theorem}\label{T1} 
Let $N$ be a $\phi$-classical prime submodule of $M$ and suppose that $(a,b,m)$ 
is a $\phi$-classical triple-zero of $N$ for some $a,b\in R,$ $m\in M$. Then

\begin{enumerate}
\item $abN\subseteq\phi(N)$.

\item $a(N:_RM)m\subseteq\phi(N)$.

\item $b(N:_RM)m\subseteq\phi(N)$.

\item $(N:_RM)^2m\subseteq\phi(N)$.

\item $a(N:_RM)N\subseteq\phi(N)$.

\item $b(N:_RM)N\subseteq\phi(N)$.
\end{enumerate}
\end{theorem}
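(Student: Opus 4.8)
The plan is to prove each of the six containments by contradiction, exploiting the defining properties of the triple-zero $(a,b,m)$, namely $abm\in\phi(N)$ together with $am\notin N$ and $bm\notin N$. In every case the idea is the same: perturb one or more of $a,b,m$ by an element of $N$ or of $(N:_RM)$ so that all the unwanted cross-terms are absorbed into $\phi(N)$, leaving a single product in $N\backslash\phi(N)$ to which the $\phi$-classical prime hypothesis applies; the resulting alternative $am\in N$ or $bm\in N$ then contradicts the triple-zero. The order of proof must be (1); then (2),(3); then (4); then (5),(6), since the later parts reuse the earlier ones.

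For part (1), fix $n\in N$ and suppose toward a contradiction that $abn\notin\phi(N)$. Since $abm\in\phi(N)$ and $abn\in N$, the element $ab(m+n)=abm+abn$ lies in $N$ but, as $abm\in\phi(N)$ while $abn\notin\phi(N)$, it does not lie in $\phi(N)$; thus $ab(m+n)\in N\backslash\phi(N)$. The $\phi$-classical prime property gives $a(m+n)\in N$ or $b(m+n)\in N$, and since $an,bn\in N$ this forces $am\in N$ or $bm\in N$, a contradiction. Hence $abn\in\phi(N)$ for every $n\in N$.

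Parts (2) and (3) follow the same template but perturb a ring element rather than $m$. For (2), take $r\in(N:_RM)$ and suppose $arm\notin\phi(N)$; then $a(b+r)m=abm+arm$ lies in $N$ (because $rm\in N$) and outside $\phi(N)$, so $a(b+r)m\in N\backslash\phi(N)$. The hypothesis yields $am\in N$ or $(b+r)m\in N$, and since $rm\in N$ the second alternative gives $bm\in N$; either way the triple-zero is contradicted. Part (3) is identical with the roles of $a$ and $b$ interchanged.

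The remaining parts are obtained by layering these results. For (4), given $r,s\in(N:_RM)$ with $rsm\notin\phi(N)$, I would expand $(a+r)(b+s)m=abm+asm+rbm+rsm$; the first term is in $\phi(N)$ by hypothesis, and $asm,rbm\in\phi(N)$ by parts (2) and (3), so the whole sum lies in $N\backslash\phi(N)$, and applying the hypothesis and then subtracting the controlled terms (using $rm,sm\in N$) again contradicts $am\notin N$ and $bm\notin N$. Part (5) is analogous, expanding $a(b+r)(m+n)=abm+abn+arm+arn$ for $r\in(N:_RM)$ and $n\in N$ and using parts (1) and (2) to absorb $abn$ and $arm$ into $\phi(N)$; part (6) is the mirror image. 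The only genuine obstacle is the bookkeeping: in each expansion one must check that every cross-term really lands in $\phi(N)$ via an earlier part, and that the leftover summands after invoking the prime property really lie in $N$ (via $n\in N$ or $rm,sm\in N$). Once the stated order is respected, each step reduces to a routine verification.
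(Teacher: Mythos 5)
Your proposal is correct and follows essentially the same route as the paper's own proof: the identical perturbations $ab(m+n)$, $a(b+x)m$, $(a+x_1)(b+x_2)m$, and $a(b+x)(m+n)$, in the same order, with the later parts absorbing cross-terms into $\phi(N)$ via the earlier ones. Nothing to add.
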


\begin{proof}
(1) Suppose that $abN\nsubseteq\phi(N)$. Then there exists $n\in N$
with $abn\notin\phi(N)$. Hence $ab(m+n)\in N\backslash\phi(N)$, so we conclude that $a(m+n)\in
N $ or $b(m+n)\in N$. Thus $am\in N$ or $bm\in N$, which contradicts the assumption that $(a,b,m)$ is $\phi$-classical triple-zero.
Thus $abN\subseteq\phi(N)$.

(2) Let $axm\notin\phi(N)$ for some $x\in (N:_{R}M)$. Then $a(b+x)m\notin\phi(N)$, because $abm\in\phi(N)$. 
Since $xm\in N$, then $a(b+x)m\in N$. Then $am\in N$ or $(b+x)m\in N$. Hence 
$am\in N$ or $bm\in N$, which contradicts our hypothesis. 

(3) The proof is similar to part (2).

(4) Assume that $x_{1}x_{2}m\notin\phi(N)$ for some $x_{1},x_{2}\in
(N:_{R}M)$. Then by parts (2) and (3), $(a+x_{1})(b+x_{2})m\notin\phi(N)$.
Clearly $(a+x_{1})(b+x_{2})m\in N$. Then $(a+x_{1})m\in N$ or $(b+x_{2})m\in
N$. Therefore $am\in N$ or $bm\in N$ which is a contradiction. Consequently $(N:_{R}M)^{2}m\subseteq\phi(N).$

(5) Let $axn\notin\phi(N)$ for some $x\in(N:_RM)$ and $n\in N$. Therefore by parts (1) and (2) we conclude that
$a(b+x)(m+n)\in N\backslash\phi(N)$. So $a(m+n)\in N$ or $(b+x)(m+n)\in N$. Hence
$am\in N$ or $bm\in N$. This contradiction shows that $a(N:_RM)N\subseteq\phi(N).$

(6) Similart to part (5).
\end{proof}

\begin{theorem}\label{T2} 
If $N$ is a $\phi$-classical prime submodule of an $R$-module $M$ that is
not classical prime, then $(N:_{R}M)^{2}N\subseteq\phi(N)$.
\end{theorem}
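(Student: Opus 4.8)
The plan is to show that if $N$ is a $\phi$-classical prime submodule that fails to be classical prime, then $N$ must admit a $\phi$-classical triple-zero, and then to leverage Theorem \ref{T1} to force $(N:_RM)^2N \subseteq \phi(N)$. First I would observe that since $N$ is not classical prime, there exist $a,b \in R$ and $m \in M$ with $abm \in N$ but $am \notin N$ and $bm \notin N$. Because $N$ \emph{is} $\phi$-classical prime, the element $abm$ cannot lie in $N \backslash \phi(N)$ (otherwise we would get $am \in N$ or $bm \in N$), so necessarily $abm \in \phi(N)$. This means precisely that $(a,b,m)$ is a $\phi$-classical triple-zero of $N$, which is exactly the hypothesis needed to invoke the previous theorem.

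Having produced a triple-zero $(a,b,m)$, I would apply parts (4), (5), and (6) of Theorem \ref{T1} to gather the inclusions $(N:_RM)^2 m \subseteq \phi(N)$, $a(N:_RM)N \subseteq \phi(N)$, and $b(N:_RM)N \subseteq \phi(N)$, together with part (1), $abN \subseteq \phi(N)$. The goal is to bound an arbitrary product $x_1 x_2 n$ with $x_1,x_2 \in (N:_RM)$ and $n \in N$. The natural device is the same ``shift'' trick used throughout Theorem \ref{T1}: replace $a$ by $a+x_1$, $b$ by $b+x_2$, and $m$ by $m+n$, and examine the product $(a+x_1)(b+x_2)(m+n)$, which lies in $N$ since every cross-term is in $N$ (using $x_i M \subseteq N$ and $n \in N$).

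The key step is to argue that $(a+x_1)(b+x_2)(m+n) \in \phi(N)$ forces $x_1 x_2 n \in \phi(N)$. Expanding the product and subtracting off the terms already known to be in $\phi(N)$ isolates $x_1 x_2 n$; concretely, if we suppose for contradiction that $x_1 x_2 n \notin \phi(N)$, then the accumulated inclusions from Theorem \ref{T1}(1)--(6) should guarantee that $(a+x_1)(b+x_2)(m+n) \notin \phi(N)$, whence $\phi$-classical primeness yields $(a+x_1)(m+n) \in N$ or $(b+x_2)(m+n) \in N$. Either conclusion simplifies (again using $x_i M \subseteq N$ and $n \in N$) to $am \in N$ or $bm \in N$, contradicting that $(a,b,m)$ is a triple-zero. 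Therefore $x_1 x_2 n \in \phi(N)$, and since $x_1, x_2 \in (N:_RM)$ and $n \in N$ were arbitrary, $(N:_RM)^2 N \subseteq \phi(N)$.

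The main obstacle I anticipate is verifying that the full expansion of $(a+x_1)(b+x_2)(m+n)$ cannot fall into $\phi(N)$ unless $x_1 x_2 n$ does: one must check that \emph{every} other term in the expansion — namely $abm$, $abn$, $ax_2 m$, $ax_2 n$, $x_1 b m$, $x_1 b n$, $x_1 x_2 m$, $abm$, and the mixed terms — already lies in $\phi(N)$ courtesy of the hypothesis $abm \in \phi(N)$ and Theorem \ref{T1}(1)--(6). This bookkeeping is the crux; once it is confirmed that all terms except possibly $x_1 x_2 n$ are absorbed into $\phi(N)$, the contradiction argument closes cleanly. I would organize the verification term by term, matching each to the appropriate part of Theorem \ref{T1}, to make the absorption transparent.
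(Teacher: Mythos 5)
Your proposal is correct and follows essentially the same route as the paper: produce a $\phi$-classical triple-zero $(a,b,m)$ from the failure of classical primeness, then suppose $x_1x_2n\notin\phi(N)$ and use parts (1)--(6) of Theorem \ref{T1} to see that $(a+x_1)(b+x_2)(m+n)\in N\backslash\phi(N)$, yielding the contradiction $am\in N$ or $bm\in N$. The term-by-term bookkeeping you flag as the crux is exactly what the paper compresses into the phrase ``By Theorem \ref{T1}.''
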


\begin{proof}
Suppose that $N$ is a $\phi$-classical prime submodule of $M$ that is
not classical prime. Then there exists a $\phi$-classical triple-zero $%
(a,b,m)$ of $N$ for some $a,b\in R$ and $m\in M$. Assume that $%
(N:_{R}M)^{2}N\nsubseteq\phi(N)$. Hence there are $x_{1},x_{2}\in
(N:_{R}M) $ and $n\in N$ such that $x_{1}x_{2}n\notin\phi(N)$. By Theorem %
\ref{T1}, $(a+x_{1})(b+x_{2})(m+n)\in N\backslash\phi(N)$. So $%
(a+x_{1})(m+n)\in N$ or $(b+x_{1})(m+n)\in N$. Therefore $am\in N$ 
or $bm\in N$, a contradiction.
\end{proof}

\begin{corollary} 
Let $M$ be an $R$-module and $N$ be a $\phi$-classical prime submodule of M such that $\phi(N)\subseteq(N:_{R}M)^3N$. 
Then $N$ is $\omega$-classical prime.
\end{corollary}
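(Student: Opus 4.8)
The plan is to dispose of the statement by a case split on whether $N$ is classical prime, the non-trivial case reducing immediately to Theorem \ref{T2}. Throughout I would abbreviate $Q=(N:_RM)$. The easy case comes first: if $N$ happens to be a classical prime submodule of $M$, then (as observed right after the definition of $\phi$-classical prime) $N$ is $\psi$-classical prime for \emph{every} function $\psi$; in particular $N$ is $\omega$-classical prime and there is nothing to prove. Equivalently one invokes $\phi_\emptyset\le\phi_\omega$ together with the monotonicity principle that $\psi_1\le\psi_2$ forces any $\psi_1$-classical prime submodule to be $\psi_2$-classical prime.

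So I would assume $N$ is \emph{not} classical prime. Then Theorem \ref{T2} applies and yields $Q^2N\subseteq\phi(N)$. On the other hand the hypothesis is $\phi(N)\subseteq Q^3N$, and trivially $Q^3N\subseteq Q^2N$. Chaining these three inclusions squeezes everything into equality: $\phi(N)=Q^2N=Q^3N$. From $Q^2N=Q^3N$ a one-line induction (multiply both sides by $Q$ repeatedly) gives $Q^nN=Q^2N$ for all $n\ge2$. Consequently the defining value of $\phi_\omega$ collapses, $\phi_\omega(N)=\bigcap_{n=1}^\infty Q^nN=Q^1N\cap Q^2N=Q^2N$, where the final step uses $Q^2N\subseteq QN$.

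The conclusion is then immediate: I have shown $\phi_\omega(N)=Q^2N=\phi(N)$. Since the property of being $\phi$-classical prime depends on $\phi$ only through the single submodule $\phi(N)$, and here $\phi(N)=\phi_\omega(N)$, the hypothesis that $N$ is $\phi$-classical prime is literally the assertion that $N$ is $\omega$-classical prime. The only real content I expect is the ``squeeze'' in the second paragraph, namely recognizing that $Q^2N\subseteq\phi(N)$ (from Theorem \ref{T2}) together with $\phi(N)\subseteq Q^3N\subseteq Q^2N$ forces the stabilization $Q^2N=Q^nN$ for all $n\ge 2$; once this is in hand, everything else is bookkeeping about the intersection defining $\phi_\omega$.
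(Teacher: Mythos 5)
Your proposal is correct and follows essentially the same route as the paper: case split on whether $N$ is classical prime, then in the non-trivial case chain $(N:_RM)^2N\subseteq\phi(N)\subseteq(N:_RM)^3N\subseteq(N:_RM)^2N$ via Theorem \ref{T2} to force $\phi(N)=(N:_RM)^jN$ for all $j\ge 2$, hence $\phi(N)=\phi_\omega(N)$. You merely spell out the stabilization induction and the intersection computation that the paper leaves implicit.
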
 

\begin{proof}
If $N$ is a classical prime submodule of $M$, then it is clear. Hence, suppose that $N$ is not a classical prime submodule of $M$.
Therefore by Theorem \ref{T2} we have $(N:_{R}{M})^2N\subseteq\phi(N)\subseteq(N:_{R}M)^3N\subseteq(N:_{R}M)^2N$, that is, $\phi(N)=(N:_{R}M)^2N=(N:_{R}M)^3N$. Therefore, $\phi(N)=(N:_{R}M)^jN$ for all $j\geq 2$ and the result is obtained.
\end{proof}

As a direct consequence of Theorem \ref{T2} we have the following result.
\begin{corollary}\label{almost}
Let $M$ be an $R$-module and $N$ be a proper submodule of $M$.
If $N$ is an $n$-almost classical prime submodule ($n\geq3$) of $M$ that is
not classical prime, then $(N:_{R}M)^{2}N=(N:_{R}M)^{n-1}N$.
\end{corollary}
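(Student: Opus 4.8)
The plan is to derive this as an essentially immediate consequence of Theorem~\ref{T2} by specializing the abstract function $\phi$ to the concrete choice $\phi_n$ that defines $n$-almost classical prime submodules. Recall that $N$ is $n$-almost classical prime precisely when $N$ is $\phi$-classical prime for $\phi=\phi_n$, i.e.\ for $\phi(N)=(N:_RM)^{n-1}N$. So I would begin by fixing this $\phi$ and unwinding what Theorem~\ref{T2} says in this case.

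First I would invoke the hypothesis: $N$ is $n$-almost classical prime and not classical prime, so Theorem~\ref{T2} applies verbatim and yields $(N:_RM)^2N\subseteq\phi(N)=(N:_RM)^{n-1}N$. Next I would supply the reverse containment, which is purely formal from the order on the $\phi_\alpha$'s (or directly from $(N:_RM)^{n-1}\subseteq(N:_RM)^2$ when $n\geq 3$, since multiplying by the smaller power gives a smaller submodule). Concretely, for $n\geq 3$ we have $(N:_RM)^{n-1}N=(N:_RM)^{n-3}\bigl((N:_RM)^2N\bigr)\subseteq(N:_RM)^2N$, because $(N:_RM)^{n-3}\subseteq R$. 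Combining the two inclusions gives the asserted equality $(N:_RM)^2N=(N:_RM)^{n-1}N$.

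The two-line proof I would write is therefore: specialize $\phi=\phi_n$, apply Theorem~\ref{T2} for the forward inclusion $(N:_RM)^2N\subseteq(N:_RM)^{n-1}N$, and note the reverse inclusion holds trivially since $n\geq 3$ forces $(N:_RM)^{n-1}N\subseteq(N:_RM)^2N$. I expect no genuine obstacle here; the only points requiring care are purely bookkeeping: confirming that the condition ``$n$-almost classical prime and not classical prime'' is exactly the instance of Theorem~\ref{T2}'s hypothesis under $\phi=\phi_n$, and making sure the range $n\geq 3$ is used (the case $n=2$ is degenerate, since then $\phi(N)=(N:_RM)N$ and the statement would read $(N:_RM)^2N=(N:_RM)N$, which is why the corollary is stated for $n\geq 3$). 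Since the corollary is explicitly billed as a ``direct consequence'' of Theorem~\ref{T2}, the whole argument is a short specialization rather than a new computation.
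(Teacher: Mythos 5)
Your proof is correct and matches the paper's intent exactly: the paper gives no separate argument, stating only that the corollary is a direct consequence of Theorem~\ref{T2}, which is precisely your specialization $\phi=\phi_n$ together with the trivial reverse inclusion $(N:_RM)^{n-1}N\subseteq(N:_RM)^{2}N$ for $n\geq 3$.
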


\begin{corollary}
Let $M$ be a multiplication $R$-module and $N$ be a proper submodule of $M$.
\begin{enumerate}
\item If $N$ is a $\phi$-classical prime submodule of $M$ that is not classical prime, then $N^{3}\subseteq\phi(N)$.
\item If $N$ is an $n$-almost classical prime submodule ($n\geq3$) of $M$ that is not classical prime, then $N^{3}=N^n$.
\end{enumerate}
\end{corollary}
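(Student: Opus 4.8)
The plan is to reduce both statements to the two preceding results, Theorem~\ref{T2} and Corollary~\ref{almost}, by exploiting a single computational identity that holds in a multiplication module. The crucial point is that, writing $P:=(N:_RM)$, the multiplication hypothesis gives $N=PM$, and then the product of submodules behaves like a power of the ideal $P$: one has $N^{k+1}=P^kN$ for every $k\geq 1$.

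First I would establish this identity. Since $N=PM$, the definition of the product of submodules yields $N^2=P^2M$, and an easy induction (using $N^{k+1}=N^k\cdot N$ together with $N^k=P^kM$, and the product rule $(I_1M)(I_2M)=I_1I_2M$) gives $N^{k+1}=P^{k+1}M$. On the other hand $P^kN=P^k(PM)=P^{k+1}M$, so indeed $P^kN=N^{k+1}$. In particular, taking $k=2$ gives $(N:_RM)^2N=N^3$, and taking $k=n-1$ gives $(N:_RM)^{n-1}N=N^n$.

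With this in hand, part~(1) is immediate: if $N$ is $\phi$-classical prime but not classical prime, Theorem~\ref{T2} gives $(N:_RM)^2N\subseteq\phi(N)$, and the identity rewrites the left-hand side as $N^3$, so $N^3\subseteq\phi(N)$. For part~(2), being $n$-almost classical prime is precisely the case $\phi=\phi_n$, that is, $\phi(N)=(N:_RM)^{n-1}N$; since $N$ is assumed not classical prime, Corollary~\ref{almost} yields $(N:_RM)^2N=(N:_RM)^{n-1}N$, and applying the identity to both sides turns this equality into $N^3=N^n$.

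There is no real obstacle here beyond bookkeeping: the only thing requiring care is checking that the product of submodules is computed with the canonical ideal $P=(N:_RM)$ and that, by the cited independence-of-presentation result, this matches the power $N^{k+1}$ regardless of which presentation $N=IM$ one starts from. Once the identity $N^{k+1}=(N:_RM)^kN$ is recorded, both conclusions drop out of the earlier theorems with no further computation.
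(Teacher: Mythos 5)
Your proof is correct and follows essentially the same route as the paper: both reduce to Theorem~\ref{T2} via the multiplication-module identity $(N:_RM)^kN=N^{k+1}$ (the paper cites Remark~\ref{multi} for this; you spell it out). The only cosmetic difference is that for part~(2) you invoke Corollary~\ref{almost} where the paper simply applies part~(1) together with $\phi_n(N)=N^n$, but these are the same argument.
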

\begin{proof}
(1) Since $M$ is multiplication, then $N=(N:_RM)M$. Therefore by Theorem \ref{T2} and Remark \ref{multi}, $N^3=(N:_RM)^2N\subseteq\phi(N)$.

(2) Notice that $\phi_n(N)=(N:_{R}M)^{n-1}N=N^n$. Now, use part (1).
\end{proof}

\begin{theorem}\label{nil} 
Let $N$ be a $\phi$-classical prime submodule of $M$. If $N$
is not classical prime, then

\begin{enumerate}
\item $\sqrt{(N:_RM)}=\sqrt{(\phi(N):_RM)}$.

\item If $M$ is multiplication, then $M$-${rad}(N)$=$M$-${rad}(\phi(N))$.
\end{enumerate}
\end{theorem}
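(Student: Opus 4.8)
The plan is to leverage Theorem~\ref{T2}, which already tells us that $(N:_RM)^2N\subseteq\phi(N)$ whenever $N$ is $\phi$-classical prime but not classical prime. Since we always normalize $\phi(N)\subseteq N$, and since $(N:_RM)^2N\subseteq (N:_RM)N\subseteq N$, we have the sandwiching $(N:_RM)^2N\subseteq\phi(N)\subseteq N$. The key observation is that all three of these submodules have the same radical colon ideal, so I expect the proof to reduce to comparing radicals of colon ideals.

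\medskip

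For part~(1), I would first establish the inclusion $\sqrt{(\phi(N):_RM)}\subseteq\sqrt{(N:_RM)}$, which is immediate from $\phi(N)\subseteq N$ (monotonicity of the colon operation and of the radical). For the reverse inclusion, take $r\in\sqrt{(N:_RM)}$, so $r^k M\subseteq N$ for some $k$, i.e.\ $r^k\in(N:_RM)$. The plan is to raise to a high enough power so that $(N:_RM)^2N$ absorbs $M$: explicitly, $r^{2k}(N:_RM)M\subseteq (N:_RM)^2M\subseteq(N:_RM)^2\cdot\big((N:_RM)M\big)$, and using $(N:_RM)M\subseteq N$ together with Theorem~\ref{T2} I would push things into $(N:_RM)^2N\subseteq\phi(N)$. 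More cleanly, since $r^k\in(N:_RM)$ gives $r^{3k}M\subseteq r^{2k}N\subseteq (N:_RM)^2N\subseteq\phi(N)$, we obtain $r^{3k}\in(\phi(N):_RM)$, hence $r\in\sqrt{(\phi(N):_RM)}$. This yields equality.

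\medskip

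For part~(2), I would invoke the stated fact (from \cite[Theorem 2.12]{ES}) that for a proper submodule $K$ of a multiplication module, $M\text{-}\mathrm{rad}(K)=\sqrt{(K:_RM)}M$. Applying this to both $N$ and $\phi(N)$ and using part~(1),
\[
M\text{-}\mathrm{rad}(N)=\sqrt{(N:_RM)}M=\sqrt{(\phi(N):_RM)}M=M\text{-}\mathrm{rad}(\phi(N)),
\]
provided $\phi(N)$ is itself a proper submodule. The one point requiring a little care is precisely this properness: one must check that $\phi(N)\neq M$ so that the radical formula applies to $\phi(N)$. The main obstacle I anticipate is the bookkeeping in part~(1)—choosing the correct exponent and verifying the chain of inclusions that lands inside $(N:_RM)^2N$—rather than any conceptual difficulty, since Theorem~\ref{T2} does the essential work of relating $\phi(N)$ to the colon powers.
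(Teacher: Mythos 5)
Your proof is correct and follows essentially the same route as the paper: both hinge on Theorem~\ref{T2}, deduce that a power of $(N:_RM)$ lands in $(\phi(N):_RM)$ (the paper shows $(N:_RM)^3\subseteq(\phi(N):_RM)$ at the ideal level; you show $r^{3k}M\subseteq (N:_RM)^2N\subseteq\phi(N)$ elementwise), and then apply the $M$-radical formula $M$-$\mathrm{rad}(K)=\sqrt{(K:_RM)}M$ for multiplication modules. The only blemish is your first ``explicit'' chain, where the inclusion $(N:_RM)^2M\subseteq(N:_RM)^2\cdot\big((N:_RM)M\big)$ points the wrong way, but the ``more cleanly'' version you settle on is valid, and the properness of $\phi(N)$ you flag is automatic since $\phi(N)\subseteq N\subsetneq M$.
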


\begin{proof}
(1) Assume that $N$ is not classical prime. By Theorem \ref{T2}, $%
(N:_RM)^2N\subseteq\phi(N)$. Then 
\begin{equation*}
(N:_RM)^3=(N:_RM)^2(N:_RM)
\end{equation*}
\begin{equation*}
\hspace{2cm}\subseteq((N:_RM)^2N:_RM)
\end{equation*}
\begin{equation*}
\hspace{.8cm}\subseteq(\phi(N):_RM),
\end{equation*}
and so $(N:_RM)\subseteq\sqrt{(\phi(N):_RM)}$. Hence, we have $\sqrt{(N:_RM)}=
\sqrt{(\phi(N):_RM)}$.

(2) By part (1), $M $-$\mathrm{rad}(N)=\sqrt{(N:_{R}M)}M=\sqrt{(\phi(N):_RM)}M=M$-$\mathrm{rad}(\phi(N))$.
\end{proof}

\begin{theorem}
Let $M$ be an $R$-module. Suppose that $N_1,N_2$ are $\phi$-classical prime submodules of $M$ that are not 
classical prime submodules. Then
\begin{enumerate}
\item $\sqrt{(N_1:_RM)+(N_2:_RM)}=\sqrt{(\phi(N_1):_RM)+(\phi(N_2):_RM)}$.
\item If $N_1+N_2\neq M$, $\phi(N_1)\subseteq N_2$ and $\phi(N_2)\subseteq\phi(N_1+N_2)$,
then $N_1+N_2$ is a $\phi$-classical prime submodule.
\end{enumerate}
\end{theorem}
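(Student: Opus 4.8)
The plan is to handle the two parts separately: part (1) is a direct consequence of Theorem~\ref{nil}, while part (2) reduces to the behaviour of $N_1$ and $N_2$ and isolates one genuinely delicate case.

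For part (1), set $A_i=(N_i:_RM)$ and $B_i=(\phi(N_i):_RM)$ for $i=1,2$. Since each $N_i$ is $\phi$-classical prime and not classical prime, Theorem~\ref{nil}(1) gives $\sqrt{A_i}=\sqrt{B_i}$. I would then invoke the standard radical identity $\sqrt{I+J}=\sqrt{\sqrt{I}+\sqrt{J}}$, valid for any ideals $I,J$: indeed $I+J\subseteq\sqrt{I}+\sqrt{J}\subseteq\sqrt{I+J}$, and taking radicals collapses the outer terms. Applying it twice yields $\sqrt{A_1+A_2}=\sqrt{\sqrt{A_1}+\sqrt{A_2}}=\sqrt{\sqrt{B_1}+\sqrt{B_2}}=\sqrt{B_1+B_2}$, which is exactly the asserted equality. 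This part presents no obstacle.

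For part (2), write $P=N_1+N_2$, which is proper by the hypothesis $N_1+N_2\neq M$. Take $a,b\in R$ and $m\in M$ with $abm\in P\setminus\phi(P)$ and suppose toward a contradiction that $am\notin P$ and $bm\notin P$; in particular $am,bm\notin N_1$ and $am,bm\notin N_2$. First I would dispose of the two ``pure'' cases. If $abm\in N_2$, then since $\phi(N_2)\subseteq\phi(P)$ we have $abm\in N_2\setminus\phi(N_2)$, so the $\phi$-classical primeness of $N_2$ forces $am\in N_2$ or $bm\in N_2\subseteq P$, a contradiction; hence $abm\notin N_2$. Consequently $abm\notin\phi(N_1)$ as well, because $\phi(N_1)\subseteq N_2$; so if $abm\in N_1$ then $abm\in N_1\setminus\phi(N_1)$ and the $\phi$-classical primeness of $N_1$ again yields a contradiction. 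Thus $abm\in P\setminus(N_1\cup N_2)$, and we may write $abm=n_1+n_2$ with $n_i\in N_i$, $n_1\notin N_2$ and $n_2\notin N_1$.

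The remaining ``mixed'' case is the main obstacle. To resolve it I would exploit that $N_1$ and $N_2$ are not classical prime: by Theorem~\ref{T2}, $(N_1:_RM)^2N_1\subseteq\phi(N_1)\subseteq N_2$ and $(N_2:_RM)^2N_2\subseteq\phi(N_2)\subseteq\phi(P)$, while each $N_i$ carries a $\phi$-classical triple-zero whose vanishing consequences are catalogued in Theorem~\ref{T1}. The strategy mirrors the proof of Theorem~\ref{T2}: starting from the decomposition $abm=n_1+n_2$ and the failure triple $(a,b,m)$ for $P$, I would perturb $a$, $b$ and $m$ by suitable elements drawn from $(N_1:_RM)$, $(N_2:_RM)$ and from the triple-zeros, so as to manufacture from $(a,b,m)$ a new triple lying in $N_i\setminus\phi(N_i)$ for one of $i=1,2$; the $\phi$-classical primeness of that $N_i$ then returns $am\in N_i$ or $bm\in N_i\subseteq P$, contradicting the standing assumption. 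Checking that the perturbation simultaneously keeps the product inside $P$, pushes it out of $\phi(P)$, and leaves the non-memberships of $am,bm$ intact is the step requiring care, and is precisely where the hypotheses $\phi(N_1)\subseteq N_2$ and $\phi(N_2)\subseteq\phi(P)$ do their real work.
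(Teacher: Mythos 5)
Part (1) of your proposal is correct and coincides with the paper's argument: Theorem~\ref{nil}(1) applied to each $N_i$, followed by the radical identity $\sqrt{I+J}=\sqrt{\sqrt{I}+\sqrt{J}}$ (the paper cites Sharp, 2.25(i), for exactly this step).

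Part (2), however, contains a genuine gap: the ``mixed'' case, which you yourself flag as the main obstacle, is never resolved, and the perturbation strategy you sketch cannot resolve it. Once you know $abm\in P\setminus(N_1\cup N_2)$, replacing $a$ by $a+x$ with $x\in(N_1:_RM)$, or $m$ by $m+n$ with $n\in N_1$, changes the product $abm$ by an element of $N_1$, so the perturbed product lies in $N_1$ if and only if the original one did; the same holds for $N_2$. Thus no additive perturbation whose cross-terms land in $N_1$ or $N_2$ can ever move the product into $N_1\setminus\phi(N_1)$ or $N_2\setminus\phi(N_2)$, and the only extra leverage you get from the $N_i$ not being classical prime (Theorems~\ref{T1} and~\ref{T2}) consists of containments \emph{inside} $\phi(N_i)$, which push products into the excluded set rather than out of it. The paper avoids this case entirely by working in a quotient: since $\phi(N_1)\subseteq N_1\cap N_2$, it invokes Theorem~\ref{frac}(3) together with the isomorphism $(N_1+N_2)/N_2\simeq N_1/(N_1\cap N_2)$ to conclude that $(N_1+N_2)/N_2$ is a weakly classical prime submodule of $M/N_2$, and then applies Theorem~\ref{frac}(4) with $K=N_2$ (using that $N_2$ is $\phi$-classical prime and $\phi(N_2)\subseteq\phi(N_1+N_2)$) to lift this to $N_1+N_2$ being $\phi$-classical prime. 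You should either reproduce that quotient argument or find a genuinely different way to dispose of the mixed case; as written, your part (2) is a plan rather than a proof.
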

\begin{proof}
(1) By Theorem \ref{nil}, we have $\sqrt{(N_1:_RM)}=\sqrt{(\phi(N_1):_RM)}$  and 
$\sqrt{(N_2:_RM)}\\=\sqrt{(\phi(N_2):_RM)}$. Now, by \cite[2.25(i)]{Sh} the result follows.

(2) Suppose that $N_1+N_2\neq M$, $\phi(N_1)\subseteq N_2$ and $\phi(N_2)\subseteq\phi(N_1+N_2)$.
Since $(N_1+N_2)/N_2\simeq N_1/(N_1\cap N_2)$ and $N_1$ is $\phi$-classical prime, we get
 $(N_1+N_2)/N_2$ is a weakly classical prime submodule of $M/N_2$, by Theorem \ref{frac}(3).
Now, the assertion follows from Theorem \ref{frac}(4).
\end{proof}

\begin{theorem}\label{prod1}
Let $M_{1}, M_{2}$ be $R$-modules and
$N_{1}$ be a proper submodule of $M_1$. 
Suppose that $\psi _{i}:S(M_{i})\rightarrow S(M_{i})\cup\{\emptyset \}$ be functions (for $i=1,2$) and let $\phi =\psi _{1}\times\psi _{2}$. Then the following conditions are equivalent:
\begin{enumerate}
\item $N=N_{1}\times M_{2}$ is a $\phi$-classical prime submodule of $M=M_{1}\times M_{2}$;

\item $N_1$ is a $\phi$-classical prime submodule of $M_1$ and for each $r,s\in R$ and $m_1\in M_1$
we have $$rsm_1\in\psi_1(N_1),\ rm_1\notin N_1, \ sm_1\notin N_1\Rightarrow rs\in (\psi_2(M_2):_RM_2).$$
\end{enumerate}
\end{theorem}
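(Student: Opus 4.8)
The plan is to work directly from the definitions, the one preliminary observation that drives everything being that since $\phi=\psi_1\times\psi_2$ we have $\phi(N)=\psi_1(N_1)\times\psi_2(M_2)$. Consequently, for $a,b\in R$ and $m=(m_1,m_2)\in M$, the hypothesis $abm\in N\backslash\phi(N)$ unpacks as: $abm_1\in N_1$ together with the disjunction ($abm_1\notin\psi_1(N_1)$ or $abm_2\notin\psi_2(M_2)$); while the desired conclusion $am\in N$ or $bm\in N$ is simply $am_1\in N_1$ or $bm_1\in N_1$. I would record this translation first, since both directions rest on it. (Here ``$N_1$ is a $\phi$-classical prime submodule of $M_1$'' in condition (2) is read as $\psi_1$-classical prime, the restriction of $\phi$ to the first factor.)

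For $(1)\Rightarrow(2)$ I would test the $\phi$-classical prime hypothesis on carefully chosen elements. To recover that $N_1$ is $\psi_1$-classical prime, take $a,b,m_1$ with $abm_1\in N_1\backslash\psi_1(N_1)$ and feed $m=(m_1,0)$ into $N$: then $abm\in N$ and, because $abm_1\notin\psi_1(N_1)$, also $abm\notin\phi(N)$, so $\phi$-classical primeness of $N$ forces $am_1\in N_1$ or $bm_1\in N_1$. For the auxiliary implication in (2), suppose $rsm_1\in\psi_1(N_1)$ with $rm_1\notin N_1$ and $sm_1\notin N_1$ but, toward a contradiction, $rs\notin(\psi_2(M_2):_RM_2)$; choose $m_2\in M_2$ with $rsm_2\notin\psi_2(M_2)$ and test $m=(m_1,m_2)$. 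Now $rsm\in N$ (as $\psi_1(N_1)\subseteq N_1$) and $rsm\notin\phi(N)$ (because of the second coordinate), yet $rm_1\notin N_1$ and $sm_1\notin N_1$ contradicts $\phi$-classical primeness; hence $rs\in(\psi_2(M_2):_RM_2)$.

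For $(2)\Rightarrow(1)$ I would take $abm\in N\backslash\phi(N)$ and split on the disjunction above. If $abm_1\notin\psi_1(N_1)$, then $abm_1\in N_1\backslash\psi_1(N_1)$ and $\psi_1$-classical primeness of $N_1$ gives $am_1\in N_1$ or $bm_1\in N_1$ directly. The remaining case $abm_1\in\psi_1(N_1)$ forces $abm_2\notin\psi_2(M_2)$; here I argue by contradiction, assuming $am_1\notin N_1$ and $bm_1\notin N_1$, and apply the implication of (2) with $r=a$, $s=b$ to conclude $ab\in(\psi_2(M_2):_RM_2)$, whence $abm_2\in\psi_2(M_2)$, contradicting the previous line. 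Thus $am_1\in N_1$ or $bm_1\in N_1$, and $N$ is $\phi$-classical prime.

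The argument is largely bookkeeping once the translation of $N\backslash\phi(N)$ is in place; the only genuinely delicate point — and the reason condition (2) must carry the extra implication about $(\psi_2(M_2):_RM_2)$ — is precisely the case $abm_1\in\psi_1(N_1)$ with $abm_2\notin\psi_2(M_2)$, which is invisible to the primeness of $N_1$ alone and is exactly the ``triple-zero'' behaviour of the second coordinate. I would double-check there the standing normalization $\psi_i(N_i)\subseteq N_i$ adopted in the paper, so that membership in $\psi_1(N_1)$ indeed implies membership in $N_1$.
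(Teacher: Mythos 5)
Your proposal is correct and follows essentially the same route as the paper's own proof: both directions rest on the identification $\phi(N)=\psi_1(N_1)\times\psi_2(M_2)$, use the test elements $(m_1,0)$ and $(m_1,m_2)$ for $(1)\Rightarrow(2)$, and split $(2)\Rightarrow(1)$ on whether $abm_1\in\psi_1(N_1)$. The only cosmetic difference is that you phrase the second case of $(2)\Rightarrow(1)$ as a contradiction where the paper uses the contrapositive of the auxiliary implication.
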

\begin{proof}
(1)$\Rightarrow$(2) Suppose that $N=N_{1}\times M_{2}$ is a $\phi$-classical prime submodule of 
$M=M_{1}\times M_{2}$. Let $r,s\in R$ and $m_1\in M_1$ be such that $rsm_1\in N_1\backslash\psi_1(N_1)$.
Then $rs(m_1,0)\in N\backslash\phi(N)$. Thus $r(m_1,0)\in N$ or $s(m_1,0)\in N$, and so $rm_1\in N_1$ or $sm_1\in N_1$.
Consequently $N_1$ is a $\phi$-classical prime submodule of $M_1$. Now, assume that $rsm_1\in\psi(N_1)$
for some $r,s\in R$ and $m_1\in M_1$ such that $rm_1\notin N_1$ and $sm_1\notin N_1$.
Suppose that $rs\notin(\psi_2(M_2):_RM_2)$. Therefore there exists $m_2\in M_2$ such that $rsm_2\notin\psi_2(M_2)$.
Hence $rs(m_1,m_2)\in N\backslash\phi(N)$, and so $r(m_1,m_2)\in N$ and $s(m_1,m_2)\in N$.
Thus $rm_1\in N_1$ or $sm_1\in N_1$ which is a contradiction. Consequently $rs\in(\psi_2(M_2):_RM_2)$.\newline
(2)$\Rightarrow$(1) Let $r,s\in R$ and $(m_1,m_2)\in M=M_{1}\times M_{2}$ be such that 
$rs(m_1,m_2)\in N\backslash\phi(N)$. First assume that $rsm_1\notin\psi_1(N_1)$. Then by part (2),
$rm_1\in N_1$ or $sm_1\in N_1$. So $r(m_1,m_2)\in N$ or $s(m_1,m_2)\in N$, and thus we are done.
If $rsm_1\in\psi_1(N_1)$, then $rsm_2\notin\psi_2(M_2)$. Therefore $rs\notin(\psi_2(M_2):_RM_2)$, and so part (2) implies that
either $rm_1\in N_1$ or $sm_1\in N_1$. Again we have that $r(m_1,m_2)\in N$ or $s(m_1,m_2)\in N$ which 
shows $N$ is a $\phi$-classical prime submodule of $M$.
\end{proof}

\begin{proposition}\label{p1}
Let $R=R_1\times R_2$ be a decomposable ring, $M_1$ be an $R_1$-module and $M_2$ be an $R_2$-module. Suppose that $\phi:S(M)\rightarrow S(M)\cup\{\emptyset\}$ is a function and $M=M_1\times M_2$. If $N_1$ is a weakly classical prime submodule of $M_1$ satisfying $\{0\}\times M_2\subseteq\phi(N_1\times M_2)$, then $N_1\times M_2$ is a $\phi$-classical prime submodule of $M$.
\end{proposition}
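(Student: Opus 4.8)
The plan is to verify the defining property of a $\phi$-classical prime submodule directly, exploiting the coordinatewise structure of the $R$-action on $M=M_1\times M_2$. First I would record that $N=N_1\times M_2$ is proper: a weakly classical prime submodule is by definition proper, so $N_1\neq M_1$ and hence $N\neq M$. Writing a typical pair of scalars as $a=(a_1,a_2)$, $b=(b_1,b_2)$ with $a_i,b_i\in R_i$, and $m=(m_1,m_2)\in M$, the product action gives $abm=(a_1b_1m_1,\,a_2b_2m_2)$.

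Now suppose $abm\in N\setminus\phi(N)$. Membership $abm\in N=N_1\times M_2$ forces $a_1b_1m_1\in N_1$, the second coordinate lying in $M_2$ automatically. The key step is to exploit the hypothesis $\{0\}\times M_2\subseteq\phi(N)$: if $a_1b_1m_1=0$, then $abm=(0,a_2b_2m_2)\in\{0\}\times M_2\subseteq\phi(N)$, contradicting $abm\notin\phi(N)$. Hence $a_1b_1m_1\neq0$, and so $0\neq a_1b_1m_1\in N_1$.

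At this point the weakly classical prime property of $N_1$ as an $R_1$-submodule of $M_1$ applies to the scalars $a_1,b_1\in R_1$ and yields $a_1m_1\in N_1$ or $b_1m_1\in N_1$. In the first case $am=(a_1m_1,a_2m_2)\in N_1\times M_2=N$; in the second case $bm\in N$. Either way the required dichotomy $am\in N$ or $bm\in N$ holds, so $N$ is $\phi$-classical prime.

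I do not expect a genuine obstacle: the entire argument rests on the single observation that $\{0\}\times M_2\subseteq\phi(N)$ absorbs every element whose first coordinate vanishes, so the condition $abm\notin\phi(N)$ already guarantees $a_1b_1m_1\neq0$ and thereby reduces the problem to the weakly classical prime behaviour of $N_1$ in the first coordinate. The only points to state with care are that $R$ acts coordinatewise and that properness of $N_1$ transfers to $N$.
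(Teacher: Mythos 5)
Your proof is correct and follows essentially the same route as the paper's: both arguments reduce to the first coordinate, use the hypothesis $\{0\}\times M_2\subseteq\phi(N_1\times M_2)$ to conclude that $a_1b_1m_1\neq 0$, and then invoke the weakly classical prime property of $N_1$. You simply spell out the nonvanishing step that the paper leaves implicit.
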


\begin{proof}
Let $(a,b),(c,d)\in R$ and $(m_1,m_2)\in M$ such that $(a,b)(c,d)(m_1,m_2)\in N_1\times M_2\backslash\phi(N_1\times M_2)$. Then $0\neq acm_1\in N_1$. So $am_1\in N_1$ or $cm_1\in N_1$, since $N_1$ is a weakly classical prime submodule. Hence, $(a,b)(m_1,m_2)\in N_1\times M_2$ or $(c,d)(m_1,m_2)\in N_1\times M_2$.
\end{proof}

\begin{corollary}
Let $R=R_{1}\times R_{2}$ be a decomposable ring and $M=M_{1}\times R_{2}$ be an $R$-module where
$M_1$ is an $R_1$-module. 
If $N_1$ is a weakly classical prime submodule of $M_1$, then $N=N_1\times R_2$ is a 3-almost classical prime submodule of $M$.
\end{corollary}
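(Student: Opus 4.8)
The plan is to realize this as a direct application of Proposition~\ref{p1}. Recall that a $3$-almost classical prime submodule is exactly a $\phi_3$-classical prime submodule, where $\phi_3(N)=(N:_RM)^2N$. Taking $M_2=R_2$ and $\phi=\phi_3$ in Proposition~\ref{p1}, it suffices to verify its hypothesis, namely the containment $\{0\}\times R_2\subseteq\phi_3(N)=(N:_RM)^2N$; the weak classical primeness of $N_1$ is given by assumption.

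To check this containment, I would first compute the colon ideal $(N:_RM)$. Since $M=M_1\times R_2$ and $N=N_1\times R_2$, an element $(r_1,r_2)\in R_1\times R_2$ satisfies $(r_1,r_2)M\subseteq N$ if and only if $r_1M_1\subseteq N_1$ and $r_2R_2\subseteq R_2$. The second condition is automatic, so $(N:_RM)=(N_1:_{R_1}M_1)\times R_2$.

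Next, using that products of ideals in a decomposable ring are taken coordinatewise, together with $R_2\cdot R_2=R_2$ (valid because $R_2$ has an identity), I would obtain $(N:_RM)^2=(N_1:_{R_1}M_1)^2\times R_2$, and hence $(N:_RM)^2N=(N_1:_{R_1}M_1)^2N_1\times R_2$. In particular the second coordinate of $\phi_3(N)$ is all of $R_2$, whence $\{0\}\times R_2\subseteq(N:_RM)^2N=\phi_3(N)$, as required. Proposition~\ref{p1} then yields that $N=N_1\times R_2$ is $\phi_3$-classical prime, i.e., a $3$-almost classical prime submodule of $M$.

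The only point demanding care is the coordinatewise computation of $(N:_RM)^2N$: the essential feature is that $R_2$, being a ring with identity viewed as a module over itself, satisfies $R_2\cdot R_2=R_2$, which forces the second coordinate of $\phi_3(N)$ to exhaust $R_2$ and thereby guarantees the hypothesis $\{0\}\times R_2\subseteq\phi_3(N)$ of Proposition~\ref{p1}. Everything else is routine bookkeeping in the product $R_1\times R_2$.
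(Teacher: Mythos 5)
Your proof is correct, and it takes a noticeably more direct route than the paper's. The paper first splits into cases: if $N_1$ is classical prime it concludes immediately, and otherwise it invokes Theorem \ref{T2} (applied to $N_1$ as a $\phi_0$-classical prime submodule that is not classical prime) to compute $(N_1:_{R_1}M_1)^2N_1=\{0\}$ exactly, hence $\phi_3(N)=\{0\}\times R_2$, before appealing to Proposition \ref{p1}. You instead observe that Proposition \ref{p1} only needs the \emph{containment} $\{0\}\times R_2\subseteq\phi_3(N)$, and that this holds for free: since the second factor of $M$ is $R_2$ itself, one has $(N:_RM)=(N_1:_{R_1}M_1)\times R_2$ and so $(N:_RM)^2N=\bigl((N_1:_{R_1}M_1)^2N_1\bigr)\times R_2$, whose first coordinate is a submodule of $M_1$ and hence contains $0$. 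This bypasses both the case distinction and Theorem \ref{T2}, making the weak classical primeness of $N_1$ enter only as the hypothesis of Proposition \ref{p1}; as a bonus, the same observation shows $N$ is in fact $n$-almost classical prime for every $n\geq 2$ (and even $\omega$-classical prime), since each $\phi_n(N)$ has second coordinate equal to $R_2$. The paper's argument, by contrast, yields the sharper identity $\phi_3(N)=\{0\}\times R_2$ in the non-classical-prime case, but that extra precision is not needed for the stated conclusion.
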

\begin{proof}
Suppose that $N_1$ is a weakly classical prime submodule of $M_1$. 
If $N_1$ is a classical prime submodule of $M_1$, then it is easy to see that
$N$ is a classical prime submodule of $M$ and so is $\phi$-classical prime submodule of $M$, for all $\phi$.
Assume that $N_1$ is not classical prime. Therefore by \ref{T2}, $(N_1:_{R_1}M_1)^2N_1=\{0\}$
and so $\phi_3(N)=(N:_RM)^2N=\{0\}\times R_2$. Now, by Proposition \ref{p1} the result follows.
\end{proof}

\begin{theorem}
Let  $R=R_1\times R_2$ be a decomposable ring and $M=M_1\times M_2$ be an $R$-module where $M_1$ is an $R_1$-module 
and $M_2$ is an $R_2$-module. Let 
$\psi _i:S\left( M_i\right)
\rightarrow S(M_i)\cup \left\{ \varnothing \right\} $ be  functions for $i=1, 2$ and let  $\phi= \psi_1 \times \psi_2$. 
If $N=N_1\times M_2$ is a proper submodule of $M$, then the following conditions are equivalent:

\begin{itemize}
\item[(1)] $N_1$ is a classical prime submodule of $M_1$;
\item[(2)]   $N$ is a classical prime submodule of $M$;
\item[(3)]   $N$ is $\phi$-classical prime submodule of $M$  where $\psi_2(M_2)\neq M_2$.
\end{itemize}

\end{theorem}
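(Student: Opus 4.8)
The plan is to prove the equivalence via the cycle $(1)\Rightarrow(2)\Rightarrow(3)\Rightarrow(1)$, treating the condition $\psi_2(M_2)\neq M_2$ as a standing hypothesis under which statement (3) is read. Before starting I would record two structural facts about the product decomposition: writing a typical element of $R$ as $(a_1,a_2)$ and of $M$ as $(m_1,m_2)$ with the componentwise action $(a_1,a_2)(m_1,m_2)=(a_1m_1,a_2m_2)$, one has $\phi(N)=\psi_1(N_1)\times\psi_2(M_2)$ since $\phi=\psi_1\times\psi_2$, and membership $(x_1,x_2)\in N=N_1\times M_2$ is governed entirely by the first coordinate, i.e.\ it holds iff $x_1\in N_1$. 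These two observations are what let me pass back and forth between products in $M$ and products in $M_1$.

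For $(1)\Rightarrow(2)$ I would take $a=(a_1,a_2)$, $b=(b_1,b_2)$, $m=(m_1,m_2)$ with $abm\in N$; projecting to the first coordinate gives $a_1b_1m_1\in N_1$, so the classical primeness of $N_1$ yields $a_1m_1\in N_1$ or $b_1m_1\in N_1$, and lifting this back gives $am\in N$ or $bm\in N$. The implication $(2)\Rightarrow(3)$ is immediate from the remark in the introduction that every classical prime submodule is $\phi$-classical prime for every $\phi$; here the side condition $\psi_2(M_2)\neq M_2$ is simply part of the data we are carrying, and nothing needs to be checked.

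The substantive step, and the main obstacle, is $(3)\Rightarrow(1)$, where the hypothesis $\psi_2(M_2)\neq M_2$ finally earns its keep. Given $a_1b_1m_1\in N_1$, I would engineer a product in $M$ that lands in $N\setminus\phi(N)$ so that the $\phi$-classical prime property can be triggered. Since $\psi_2(M_2)\neq M_2$, choose $m_2\in M_2\setminus\psi_2(M_2)$, and set $a=(a_1,1)$, $b=(b_1,1)$, $m=(m_1,m_2)$, using the identity of $R_2$ in the second coordinate. Then $abm=(a_1b_1m_1,m_2)\in N_1\times M_2=N$, while $abm\notin\phi(N)=\psi_1(N_1)\times\psi_2(M_2)$ precisely because $m_2\notin\psi_2(M_2)$. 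Hence $abm\in N\setminus\phi(N)$, and $\phi$-classical primeness of $N$ forces $am\in N$ or $bm\in N$; reading off first coordinates gives $a_1m_1\in N_1$ or $b_1m_1\in N_1$, so $N_1$ is classical prime.

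The only delicate point I anticipate is the role of the identity in $R_2$: keeping the second components of $a$ and $b$ equal to $1$ guarantees that the first-coordinate product is untouched while the chosen $m_2$ survives into both $am$ and $bm$, so that the avoidance of $\phi(N)$ and the subsequent case analysis are both controlled by coordinates that do not interfere. I would also note in passing the degenerate possibility $\psi_2(M_2)=\emptyset$; there $\phi(N)=\emptyset$ and any $m_2\in M_2$ works, so the argument is unaffected. With $(3)\Rightarrow(1)$ in hand the cycle closes and all three conditions are equivalent.
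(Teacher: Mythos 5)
Your proposal is correct and follows essentially the same route as the paper: the cycle $(1)\Rightarrow(2)\Rightarrow(3)\Rightarrow(1)$, with the key step $(3)\Rightarrow(1)$ using elements $(a_1,1)$, $(b_1,1)$, $(m_1,m_2)$ for a chosen $m_2\in M_2\setminus\psi_2(M_2)$ to land the product in $N\setminus\phi(N)$. Your extra remark about the degenerate case $\psi_2(M_2)=\emptyset$ is a harmless refinement the paper omits.
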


\begin{proof}
(1)$\Rightarrow$(2) Let $(a_1,a_2)(b_1,b_2)(m_1,m_2)\in N$ for some $(a_1,a_2),(b_1,b_2)\in R$ and $(m_1,m_2)\in M$. 
Then $a_1b_1m_1\in N_1$ so either $a_1m_1\in N_1$ or $b_1m_1\in N_1$ which shows that either $(a_1,a_2)(m_1,m_2)\in N$
or $(b_1,b_2)(m_1,m_2)\in N$. Consequently $N$ is a classical prime submodule of $M$.\newline
(2)$\Rightarrow$(3) It is clear that every classical prime submodule is a weakly classical prime submodule.\newline
(3)$\Rightarrow$(1) Let $abm\in N_1$ for some $a, b\in R_1$  and $m\in M_1$. 
By assumption, there exists  $m'\in M_2\backslash\psi_2(M_2)$. Thus $(a, 1)(b,1)(m,m')\in N\backslash\phi(N)$. So we have  
$(a, 1)(m,m')\in N$ or $(b,1)(m,m')\in N$. Thus $am\in N_1$ or $bm\in N_1$. Therefore $N_1$ is a classical prime submodule of $M_1$.
\end{proof}

\begin{theorem}
Let  $R=R_1\times R_2$ be a decomposable ring and $M=M_1\times M_2$ be an $R$-module where $M_1$ is an $R_1$-module 
and $M_2$ is an $R_2$-module. Let $\psi _i:S\left( M_i\right)\rightarrow S(M_i)\cup \left\{ \varnothing \right\}$ be functions 
for $i=1, 2$ where $\psi_2(M_2)=M_2$, and let  $\phi= \psi_1 \times \psi_2$. 
If $N=N_1\times M_2$ is a proper submodule of $M$, then 
$N_1$ is a $\psi_1$-classical prime submodule of $M_1$ if and only if
$N$ is $\phi$-classical prime submodule of $M$.
\end{theorem}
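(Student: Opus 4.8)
The plan is to prove both directions of the biconditional directly from the definition of $\phi$-classical prime, exploiting the key feature of this setting: since $\psi_2(M_2) = M_2$, the second coordinate is completely absorbed by $\phi$, so it imposes no constraint. First I would unwind what $\phi(N)$ looks like. Writing $N = N_1 \times M_2$, we have $\phi(N) = \psi_1(N_1) \times \psi_2(M_2) = \psi_1(N_1) \times M_2$. This is the crucial computation: an element $(x_1, x_2) \in N$ fails to lie in $\phi(N)$ \emph{if and only if} $x_1 \notin \psi_1(N_1)$, because the second coordinate $x_2 \in M_2 = \psi_2(M_2)$ is always in the $\phi(N)$ second factor. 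Thus membership in $N \setminus \phi(N)$ is governed entirely by the first coordinate.

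For the forward direction, suppose $N_1$ is $\psi_1$-classical prime. I would take $(a_1,a_2),(b_1,b_2) \in R = R_1 \times R_2$ and $(m_1,m_2) \in M$ with $(a_1,a_2)(b_1,b_2)(m_1,m_2) \in N \setminus \phi(N)$. By the observation above, this means $a_1 b_1 m_1 \in N_1 \setminus \psi_1(N_1)$ (the second-coordinate condition $a_2 b_2 m_2 \in M_2$ is automatic). Since $N_1$ is $\psi_1$-classical prime, either $a_1 m_1 \in N_1$ or $b_1 m_1 \in N_1$. In the first case $(a_1,a_2)(m_1,m_2) = (a_1 m_1, a_2 m_2) \in N_1 \times M_2 = N$, and symmetrically in the second case, so $N$ is $\phi$-classical prime.

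For the converse, suppose $N$ is $\phi$-classical prime and take $a,b \in R_1$, $m \in M_1$ with $abm \in N_1 \setminus \psi_1(N_1)$. I would lift to $M$ by choosing the element $(a,1)(b,1)(m,m_2)$ for an arbitrary $m_2 \in M_2$; this lies in $N$ since $abm \in N_1$, and it lies outside $\phi(N)$ because its first coordinate $abm \notin \psi_1(N_1)$. Applying the $\phi$-classical prime hypothesis for $N$ gives $(a,1)(m,m_2) \in N$ or $(b,1)(m,m_2) \in N$, which upon reading off first coordinates yields $am \in N_1$ or $bm \in N_1$. Hence $N_1$ is $\psi_1$-classical prime.

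I do not expect a serious obstacle here; the argument is essentially bookkeeping once the identity $\phi(N) = \psi_1(N_1) \times M_2$ is recorded. The one point requiring a little care is the degenerate case where $N_1 = \psi_1(N_1)$, so that $N_1 \setminus \psi_1(N_1) = \emptyset$; then the $\psi_1$-classical prime condition on $N_1$ is vacuously satisfied, and correspondingly no triple $(a,b,m)$ ever produces an element of $N \setminus \phi(N)$ with nonzero first-coordinate contribution, so the $\phi$-classical prime condition on $N$ is likewise vacuous. The biconditional thus holds trivially in that case, and the two nontrivial implications above cover everything else. This contrasts sharply with the previous theorem, where $\psi_2(M_2) \neq M_2$ forced the stronger conclusion that $N_1$ be genuinely classical prime.
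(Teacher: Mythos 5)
Your proof is correct and follows essentially the same route as the paper: both directions hinge on the observation that $\phi(N)=\psi_1(N_1)\times M_2$, so membership in $N\setminus\phi(N)$ is determined by the first coordinate, with the converse direction using the same lift $(a,1)(b,1)(m,m_2)$ that the paper employs. The extra remark about the degenerate case $N_1=\psi_1(N_1)$ is harmless but not needed.
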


\begin{proof}
Suppose that $N$ is a $\phi$-classical prime submodule of $M$. First we
show that $N_{1}$ is a $\psi _{1}$-classical prime submodule of $M_{1}$
independently whether $\psi _{2}(M_{2})=M_{2}$ or $\psi
_{2}(M_{2})\not=M_{2}.$ Let $a_{1}b_{1}m_{1}\in N_{1}\backslash
\psi _{1}(N_{1})$ for some $a_{1},b_{1}\in R_{1}$ and $m_{1}\in M_{1}.$
Then $(a_{1},1)(b_{1},1)(m_{1},m)\in (N_{1}\times M_{2})\backslash (\psi
_{1}(N_{1})\times \psi _{2}(M_{2}))=N\backslash \phi (N)$ for any $m\in
M_{2}.$ Since $N$ is a $\phi $-classical prime submodule of $M$, we get either 
$(a_{1},1)(m_{1},m)\in N_{1}\times M_{2}$ or $(b_{1},1)(m_{1},m)\in
N_{1}\times M_{2}$. So, clearly we conclude that 
$a_{1}m_{1}\in N_{1}$ or $b_{1}m_{1}\in N_{1}.$ Therefore $
N_{1}$ is a $\psi _{1}$-classical prime submodule of $M_{1}$.
Conversely, suppose that $N_{1}$ is $\psi _{1}$-classical prime. 
Let $(a_{1},a_{2})$, $(b_{1},b_{2})\in R$ and $(m_{1},m_{2})\in M$ 
such that $(a_{1},a_{2})(b_{1},b_{2})(m_{1},m_{2})\in N\backslash
\phi (N)$. Since $\psi _{2}(M_{2})=M_{2},$ we get $a_{1}b_{1}m_{1}\in $ $
N_{1}\backslash \psi _{1}(N_{1})$ and this implies that either $a_{1}m_{1}\in N_{1}$ or $b_{1}m_{1}\in
N_{1}$. Thus $(a_{1},a_{2})(m_{1},m_{2})\in N$ or $(b_{1},b_{2})(m_{1},m_{2})\in N$.
\end{proof}

\begin{theorem}
Let  $R=R_1\times R_2$ be a decomposable ring and $M=M_1\times M_2$ be an $R$-module where $M_1$ is an $R_1$-module 
and $M_2$ is an $R_2$-module.  Suppose that $N_1, N_2 $ are proper  submodules of $M_1,M_2$, respectively. 
Let 
$\psi _i:S\left( M_i\right)
\rightarrow S(M_i)\cup \left\{ \varnothing \right\} $ be  functions for $i=1, 2$ and let  $\phi= \psi_1 \times \psi_2$. 
If $N=N_1\times N_2$ is a $\phi$-classical prime  submodule of $M$, then $N_1$ is a  $\psi_1$-classical prime submodule of $M_1$
and $N_2$ is a    $\psi_2$-classical prime submodule of $M_2$.
\end{theorem}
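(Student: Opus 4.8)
The plan is to project the $\phi$-classical prime property onto each coordinate by padding with the identity in the complementary ring and with zero in the complementary module. By the complete symmetry between the two factors it suffices to prove that $N_1$ is $\psi_1$-classical prime, since the argument for $N_2$ is obtained by interchanging the roles of the indices $1$ and $2$. Note first that $N_1$ is a proper submodule of $M_1$ by hypothesis, so it is meaningful to ask whether it is $\psi_1$-classical prime.

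So suppose $a_1,b_1\in R_1$ and $m_1\in M_1$ satisfy $a_1b_1m_1\in N_1\backslash\psi_1(N_1)$. I would lift this datum to $M=M_1\times M_2$ by forming the elements $(a_1,1),(b_1,1)\in R=R_1\times R_2$ and $(m_1,0)\in M$. Computing the action gives
\[
(a_1,1)(b_1,1)(m_1,0)=(a_1b_1m_1,0).
\]
Since $a_1b_1m_1\in N_1$ and $0\in N_2$, this element lies in $N=N_1\times N_2$. The crucial point is its membership relative to $\phi(N)=\psi_1(N_1)\times\psi_2(N_2)$: because an element of a product of sets fails to lie in that product as soon as one of its coordinates is excluded, and because the first coordinate $a_1b_1m_1\notin\psi_1(N_1)$, we obtain $(a_1b_1m_1,0)\notin\phi(N)$ irrespective of the second coordinate. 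Hence $(a_1,1)(b_1,1)(m_1,0)\in N\backslash\phi(N)$.

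Now I would invoke the hypothesis that $N$ is $\phi$-classical prime, which yields $(a_1,1)(m_1,0)\in N$ or $(b_1,1)(m_1,0)\in N$; that is, $(a_1m_1,0)\in N$ or $(b_1m_1,0)\in N$. Reading off the first coordinate gives $a_1m_1\in N_1$ or $b_1m_1\in N_1$, which is exactly what is needed to conclude that $N_1$ is $\psi_1$-classical prime. The same reasoning with $(1,a_2),(1,b_2)$ and $(0,m_2)$ establishes that $N_2$ is $\psi_2$-classical prime.

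There is no serious obstacle here; the proof is a routine coordinate-isolation argument. The only point requiring care is the correct reading of $\phi=\psi_1\times\psi_2$, namely that $\phi(N_1\times N_2)=\psi_1(N_1)\times\psi_2(N_2)$, together with the observation that excluding a single coordinate already places the padded element outside $\phi(N)$. This is precisely why the set-difference condition ``$\in N\backslash\phi(N)$'', the delicate ingredient in all $\phi$-classical prime arguments, is effortless to arrange in this direction; the reverse implication, recovering the product property from the two factors, is genuinely harder and is what forces the additional hypotheses appearing in the preceding theorems.
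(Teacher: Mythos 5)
Your proof is correct and follows essentially the same route as the paper's: both lift $a_1b_1m_1\in N_1\backslash\psi_1(N_1)$ to $(a_1,1)(b_1,1)(m_1,n)\in N\backslash\phi(N)$ with $n\in N_2$ (the paper takes an arbitrary $n\in N_2$, you take $n=0$, which is immaterial) and then read off the first coordinate. The only observation either argument needs is the one you make explicit, namely that failure in a single coordinate already excludes the padded element from $\phi(N)=\psi_1(N_1)\times\psi_2(N_2)$.
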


\begin{proof}
Suppose that  $N=N_1\times N_2$ is a $\phi$-classical prime submodule of $M$. Let $abm\in N_1\backslash\psi_1(N_1)$ that $a,b\in R_1$ and $m\in M_1$. Get an element $n\in N_2$. We have $(a,1)(b,1)(m,n)\in N\backslash\phi(N)$. Then $(a,1)(m,n)\in N$ or $(b,1)(m,n)\in N$. Thus $am\in N_1$ or $bm\in N_1$, and thus $N_1$ is a $\psi_1$-classical prime submodule of $M_1$. By a simillar argument we can show that  $N_2$ is a $\psi_2$-classical prime submodule of $M_2$.
\end{proof}




\begin{theorem}\label{product3}
Let $R=R_{1}\times R_{2}\times R_3$ be a decomposable ring and $M=M_{1}\times M_{2}\times M_3$ be an $R$-module where
$M_{i}$ is an $R_{i}$-module, for $i=1,2,3$. Suppose that $\psi _{i}:S(M_{i})\rightarrow S(M_{i})\cup\{\emptyset \}$ be functions such that $\psi(M_i)\neq M_i$ for i=1,2,3, and let $\phi =\psi _{1}\times\psi _{2}\times\psi_3.$  If $N$ is a $\phi$-classical prime submodule of $M$, then either $N=\phi(N)$ or $N$ is a classical prime submodule of $M$.
\end{theorem}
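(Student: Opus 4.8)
The plan is to argue the contrapositive: assuming $N$ is $\phi$-classical prime but \emph{not} classical prime, I will deduce $N=\phi(N)$. First I would record the decomposition forced by the idempotents of $R=R_1\times R_2\times R_3$: every submodule of $M$ splits as $N=N_1\times N_2\times N_3$ with $N_i\subseteq M_i$, and since $\phi=\psi_1\times\psi_2\times\psi_3$ we get $\phi(N)=\psi_1(N_1)\times\psi_2(N_2)\times\psi_3(N_3)\subseteq N$. Thus $N=\phi(N)$ is equivalent to $N_i=\psi_i(N_i)$ for all $i$; setting $B=\{i:N_i\neq\psi_i(N_i)\}$, the goal becomes $B=\emptyset$. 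Because $N$ is not classical prime there is a $\phi$-classical triple-zero $(a,b,m)$, so writing $a=(a_1,a_2,a_3)$, etc., we have $a_ib_im_i\in\psi_i(N_i)$ for every $i$, while the nonempty sets $J=\{i:a_im_i\notin N_i\}$ and $K=\{i:b_im_i\notin N_i\}$ record where $am$ and $bm$ escape $N$.

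The engine of the proof is an \emph{activation} construction. For $p\in B$ fix $n_p\in N_p\setminus\psi_p(N_p)$ and feed into $\phi$-classical primeness the triple obtained from $(a,b,m)$ by replacing the $p$-th coordinate of $a$ and $b$ by $1$ and the $p$-th coordinate of $m$ by $n_p$. Its product equals $n_p$ in coordinate $p$ and $a_ib_im_i\in\psi_i(N_i)$ elsewhere, hence lies in $N\setminus\phi(N)$; the resulting dichotomy reads $J\subseteq\{p\}$ or $K\subseteq\{p\}$, i.e.\ $J=\{p\}$ or $K=\{p\}$. Running this over all $p\in B$ and using that $J,K$ are two single fixed sets immediately rules out $|B|=3$ (three singleton demands cannot be met by two sets), forces $\{J,K\}=\{\{p\},\{q\}\}$ when $B=\{p,q\}$, and gives, say, $J=\{p\}$ when $B=\{p\}$.

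To finish I would manufacture one triple that violates $\phi$-classical primeness, and here the third factor is essential: whatever $B$ is, there is a good coordinate $r\notin B$. The key use of the hypothesis $\psi_i(M_i)\neq M_i$ is that a good coordinate is proper, $N_r\neq M_r$; indeed $N_r=M_r$ would give $\psi_r(N_r)=\psi_r(M_r)\neq M_r=N_r$, putting $r$ into $B$. Hence I may pick $t_r\in M_r\setminus N_r$ and use coordinate $r$ as a free slot, setting there $(\bar a_r,\bar b_r,\bar m_r)=(1,0,t_r)$ to create an $am$-witness with zero product, or $(0,1,t_r)$ for a $bm$-witness. Combining an escape at a bad coordinate (coordinates set to $(1,1,n_p)$) with witnesses for $a$ and $b$ placed in separate coordinates -- the original ones where they survive, manufactured ones in the spare good coordinate(s) -- yields $\bar a\bar b\bar m\in N\setminus\phi(N)$ together with $\bar a\bar m\notin N$ and $\bar b\bar m\notin N$, the desired contradiction; so $B=\emptyset$ and $N=\phi(N)$.

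The main obstacle is exactly this final assembly. In the tight configuration $B=\{p,q\}$ with $J=\{p\}$, $K=\{q\}$ the escape coordinate and each factor's witness compete for the same slots, and one must notice that the spare coordinate $r$ (proper by $\psi_r(M_r)\neq M_r$) can host a manufactured $a$-witness so that the escape at $p$, the $b$-witness kept at $q$, and the new $a$-witness at $r$ coexist. I would also verify the degenerate subcase $B=\{p\}$ with $J=K=\{p\}$, where \emph{both} spare coordinates are needed to create the two witnesses at once. Throughout, the triple-zero identities of Theorem \ref{T1} -- in particular $abN\subseteq\phi(N)$, read coordinatewise as $a_ib_iN_i\subseteq\psi_i(N_i)$ -- are convenient for checking that the perturbed products stay inside the relevant $\psi_i(N_i)$.
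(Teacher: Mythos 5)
Your proposal is correct, and it reaches the conclusion by a genuinely different route from the paper's. The paper argues directly: assuming $N=N_1\times N_2\times N_3\neq\phi(N)$, say with $n\in N_1\setminus\psi_1(N_1)$, it tests the single triple $(1,r,1)(1,1,s)(n,m_2,m_3)$ to show that $N_2=M_2$ or $N_3=M_3$; then the idempotent $(0,1,0)$ (resp.\ $(0,0,1)$) lies in $(N:_RM)$, and the hypothesis $\psi_2(M_2)\neq M_2$ yields $(N:_RM)^2N\nsubseteq\phi(N)$, so Theorem \ref{T2} forces $N$ to be classical prime. You instead take the contrapositive, extract a $\phi$-classical triple-zero, and run a coordinatewise combinatorial analysis --- the bad set $B$, the escape sets $J$ and $K$, the activation step forcing $J=\{p\}$ or $K=\{p\}$ for each $p\in B$, and the final assembly of a violating triple. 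I checked the delicate configurations ($|B|=3$ excluded by pigeonhole on the two singletons; $B=\{p,q\}$ with $J=\{p\}$, $K=\{q\}$ handled by escape at $p$, the surviving $b$-witness at $q$, and a manufactured $(1,0,t_r)$ witness at the spare coordinate $r$; $B=\{p\}$ handled by manufacturing both witnesses at the two spare coordinates), and they all go through; note that for $|B|=1$ the $J,K$ analysis is not even needed. What your approach buys is self-containedness and transparency: it never invokes Theorems \ref{T1} or \ref{T2}, and it shows exactly where the third factor and the hypothesis $\psi_i(M_i)\neq M_i$ enter (a spare coordinate $r\notin B$ must satisfy $N_r\neq M_r$ so that a witness can be planted there). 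What it costs is length: the full case analysis on $|B|$ and on the positions of $J$ and $K$ must be written out, whereas the paper's argument is a few lines once Theorem \ref{T2} is in hand. Both uses of the hypothesis $\psi_i(M_i)\neq M_i$ are essential but occur at different points of the two proofs.
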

\begin{proof}
If $N=\phi(N)$, then clearly $N$ is a $\phi$-classical prime submodule of $M$, so we may assume
that $N=N_{1}\times N_{2}\times N_3\neq\psi_1(N_1)\times\psi_2(N_2)\times\psi_3(N_3)$.  
Without loss of generality we may assume that $N_1\neq\psi_1(N_1)$ and so there is $n\in N_1\backslash\psi(N_1)$.
We claim that $N_2=M_2$ or $N_3=M_3$. Suppose that there are $m_2\in M_2\backslash N_2$ and $m_3\in M_3\backslash N_3$.
Get $r\in(N_2:_{R_2}M_2)$ and $s\in(N_3:_{R_3}M_3)$. Since
$(1,r,1)(1,1,s)(n,m_2,m_3)=(n,rm_2,sm_3)\in N\backslash\phi(N)$, then $(1,r,1)(n,m_2,m_3)=(n,rm_2,m_3)\in N$
or $(1,1,s)(n,m_2,m_3)=(n,m_2,sm_3)\in N$. Therefore either $m_3\in N_3$ or $m_2\in N_2$, a contradiction.
Hence $N=N_{1}\times M_{2}\times N_3$ or $N=N_{1}\times N_{2}\times M_3$. Let $N=N_{1}\times M_{2}\times N_3$.
Then $(0,1,0)\in(N:_RM)$. Clearly $(0,1,0)^2N\nsubseteq\psi_1(N_1)\times\psi_2(M_2)\times\psi(N_3)$. 
So $(N:_{R}M)^{2}N\nsubseteq\phi(N)$ which
is a contradiction, by Theorem \ref{T2}. In the case when $N=N_{1}\times N_{2}\times M_3$ we have that
$(0,0,1)\in(N:_RM)$ and similar to the previous case we reach a contradiction.
\end{proof}

\vspace{5mm} \noindent \footnotesize 
\begin{minipage}[b]{10cm}
Hojjat Mostafanasab \\
Department of Mathematics and Applications, \\ 
University of Mohaghegh Ardabili, \\ 
P. O. Box 179, Ardabil, Iran. \\
Email: h.mostafanasab@uma.ac.ir, \hspace{1mm} h.mostafanasab@gmail.com
\end{minipage}\\

\vspace{2mm} \noindent \footnotesize
\begin{minipage}[b]{10cm}
Esra Sengelen Sevim\\
Eski Silahtara\v{g}a Elektrik Santrali, Kazim Karabekir,\\
Istanbul Bilgi University,\\  
Cad. No: 2/1334060, Ey\"{u}p Istanbul, Turkey. \\
Email: esra.sengelen@bilgi.edu.tr\\
\end{minipage}\\

\vspace{5mm} \noindent \footnotesize 
\begin{minipage}[b]{10cm}
Sakineh Babaei \\
Department of Mathematics, \\ 
Imam Khomeini International University, \\ 
Qazvin, Iran. \\
Email: sbabaei@edu.ikiu.ac.ir
\end{minipage}\\

\vspace{2mm} \noindent \footnotesize
\begin{minipage}[b]{10cm}
\"{U}nsal Tekir\\
Department of Mathematics, \\ 
Marmara University, \\ 
Ziverbey, Goztepe, Istanbul 34722, Turkey. \\
Email: utekir@marmara.edu.tr
\end{minipage}\\

\end{document}